\def\tsc#1{\csdef{#1}{\textsc{\lowercase{#1}}\xspace}}
\begin{document}

\newtheorem{thm}{Theorem}[section]
\newtheorem{lem}[thm]{Lemma}
\newtheorem{prop}[thm]{Proposition}
\newtheorem{corollary}[thm]{Corollary}
\newtheorem{ex}{Example}[section]
\newtheorem{defi}{Definition}[section]
\newenvironment{Assumptions}
{%
	\setcounter{enumi}{0}
	\renewcommand{\theenumi}{(\textbf{A}.\arabic{enumi})}
	\renewcommand{\labelenumi}{\theenumi}
	\begin{enumerate}}%
	{\end{enumerate} }
\newenvironment{Definitions}
{%
	\setcounter{enumi}{0}
	\renewcommand{\theenumi}{(\textbf{D}.\arabic{enumi})}
	\renewcommand{\labelenumi}{\theenumi}
	\begin{enumerate}}%
	{\end{enumerate} }

\newcommand{\norm}[1]{\ensuremath{\left\|#1\right\|}}
\newcommand{\abs}[1]{\ensuremath{\left|#1\right|}}
\newcommand{\Om}{\ensuremath{\Omega}}
\newcommand{\disp}{\ensuremath{\displaystyle}}

\newcommand{\cF}{\ensuremath{\mathcal{F}}}
\newcommand{\cN}{\ensuremath{\mathcal{N}}}
\newcommand{\cFt}{\ensuremath{\cF_t}}
\newcommand{\tcFt}{\ensuremath{\tilde\cF_t}}
\newcommand{\cS}{\ensuremath{\mathcal{S}}}
\newcommand{\cP}{\ensuremath{\mathcal{P}}}
\newcommand{\cB}{\ensuremath{\mathcal{B}}}
\newcommand{\bB}{\ensuremath{\Bbb{B}}}
\newcommand{\U}{\ensuremath{\Bbb{U}}}
\newcommand{\bX}{\ensuremath{\Bbb{X}}}
\newcommand{\vphi}{\ensuremath{\varphi}}
\newcommand{\bK}{\ensuremath{\Bbb{K}}}
\newcommand{\En}{\ensuremath{\mathbf{1}}}
\newcommand{\cL}{\ensuremath{\mathcal{L}}}
\newcommand{\Span}{\ensuremath{\mathrm{Span}\!}}
\newcommand{\ipen}[2]{\left\langle #1, #2\right\rangle_{(H^1)^\star,H^1}}
\newcommand{\iphto}[2]{\left\langle #1, #2\right\rangle_{(H^2_N)^\star,H^2_N}}
\newcommand{\overn}{\overset{n\uparrow\infty}{\longrightarrow}}
\newcommand{\ip}[2]{\left\langle #1, #2\right\rangle}
\newcommand{\cH}{\ensuremath{\mathcal{H}}}
\newcommand{\cA}{\ensuremath{\mathcal{A}}}
\newcommand{\cZ}{\ensuremath{\mathcal{Z}}}
\newcommand{\C}{\ensuremath{\mathbf{C}}}
\newcommand{\weak}{\rightharpoonup}
\newcommand{\cX}{\ensuremath{\mathcal{X}}}
\newcommand{\tcFs}{\ensuremath{\tilde\cF_s}}
\newcommand{\cFs}{\ensuremath{\cF_s}}
\newcommand{\weakstar}{\overset{\star}{\rightharpoonup}}

\newcommand{\Gw}{\ensuremath{\Gamma_{w}}}
\newcommand{\Gs}{\ensuremath{\Gamma_{y}}}
\newcommand{\Ga}{\ensuremath{\Gamma_{0}}}
\newcommand{\Gb}{\ensuremath{\Gamma_{1}}}
\newcommand{\uv}{\ensuremath{\underline{v}}}
\newcommand{\oG}{\ensuremath{\overline{\gamma}}}
\newcommand{\Do}{\ensuremath{D}}
\newcommand{\DoGs}{\ensuremath{\Do\bigcup\Gs}}
\newcommand{\cDo}{\ensuremath{\overline{\Do}}}
\newcommand{\DoT}{\ensuremath{(0,T)\times\Do}}
\newcommand{\DoTGs}{\ensuremath{(0,T)\times\left(\Do\bigcup\Gs\right)}}
\newcommand{\cDoT}{\ensuremath{(0,T)\times\overline{\Do}}}
\newcommand{\DOT}{\ensuremath{[0,T]\times\Do}}
\newcommand{\cDOT}{\ensuremath{[0,T]\times\overline{\Do}}}
\newcommand{\Oset}{\ensuremath{\mathcal{O}}}
\newcommand{\OT}{\ensuremath{\Oset_T}}
\newcommand{\K}{\ensuremath{\mathcal{K}}}
\newcommand{\A}{\ensuremath{\mathcal{A}}}

\newcommand{\Jet}{\ensuremath{\mathcal{P}}}
\newcommand{\cJet}{\ensuremath{\overline{\mathcal{P}}}}
\newcommand{\Eset}{\ensuremath{\mathcal{E}}}
\newcommand{\Kset}{\ensuremath{\mathcal{K}}}
\newcommand{\oKset}{\ensuremath{\overline{\mathcal{K}}}}
\newcommand{\Ham}{\ensuremath{\mathcal{H}}}
\newcommand{\Gop}{\ensuremath{\mathcal{G}}}
\newcommand{\Lop}{\ensuremath{\mathcal{L}}}
\newcommand{\Iop}{\ensuremath{\mathcal{I}}}
\newcommand{\Iopsub}{\ensuremath{\mathcal{I}_{\kappa}}}
\newcommand{\Iopsup}{\ensuremath{\mathcal{I}^{\kappa}}}

\newcommand{\D}{\ensuremath{\mathcal{D}}}
\newcommand{\Dp}{\ensuremath{\D^\prime}}
\newcommand{\ov}{\ensuremath{\overline{v}}}
\newcommand{\thh}{\ensuremath{\tilde{h}}}
\newcommand{\ta}{\ensuremath{\tilde{a}}}
\newcommand{\F}{\ensuremath{\mathcal{F}}}
\newcommand{\B}{\ensuremath{\mathcal{B}}}
\newcommand{\W}{\ensuremath{\mathcal{W}}}
\newcommand{\eps}{\ensuremath{\varepsilon}}
\newcommand{\R}{\ensuremath{\Bbb{R}}}
\newcommand{\Sy}{\ensuremath{\Bbb{S}}}
\newcommand{\E}{\ensuremath{\Bbb{E}}}
\newcommand{\pmin}{\ensuremath{\underline{\pi}}}
\newcommand{\pmax}{\ensuremath{\overline{\pi}}}
\newcommand{\ximin}{\ensuremath{\underline{\xi}}}
\newcommand{\ximax}{\ensuremath{\overline{\xi}}}
\newcommand{\tmax}{\ensuremath{t_{\mathrm{max}}}}
\newcommand{\xmax}{\ensuremath{x_{\mathrm{max}}}}
\newcommand{\loc}{\mathrm{loc}}
\newcommand{\meas}{\mathrm{meas}}
\newcommand{\Lenloc}{L_{\mathrm{loc}}^1}
\newcommand{\Div}{\mathrm{div}\,}
\newcommand{\Grad}{\mathrm{\nabla}}
\newcommand{\bA}{\mathbf{A}}

\newcommand{\tH}{\ensuremath{{H}^1}}

\newcommand{\sgn}{\mathrm{sign}}
\newcommand{\op}{\ensuremath{\overline{p}}}
\newcommand{\vv}{\ensuremath{\overline{v}}}
\newcommand{\uu}{\ensuremath{\overline{u}}}
\newcommand{\oq}{\ensuremath{\overline{q}}}
\newcommand{\opi}{\ensuremath{\overline{p}_i}}
\newcommand{\ops}{\ensuremath{\op^\star}}
\newcommand{\ue}{\ensuremath{u_\eps}}
\newcommand{\uei}{\ensuremath{u_{i,\eps}}}

\newcommand{\Iap}{I_{\mathrm{app}}}
\newcommand{\Iape}{I_{\mathrm{app},\eps}}
\newcommand{\Iapn}{I_{\mathrm{app},n}}
\newcommand{\Iapa}{I_{\mathrm{app},1}}
\newcommand{\Iapb}{I_{\mathrm{app},2}}
\newcommand{\Iapk}{I_{\mathrm{app},k}}
\newcommand{\Iapnk}{I_{\mathrm{app},n}^\kappa}

\newcommand{\Lsloc}{\ensuremath{L_{\mathrm{loc}}^s}}
\newcommand{\Linf}{\ensuremath{L^\infty}}
\newcommand{\Linfloc}{\ensuremath{L^\infty_{\mathrm{loc}}}}
\newcommand{\Lpla}{\ensuremath{L^{p_{1,l}}}}
\newcommand{\Lplb}{\ensuremath{L^{p_{2,l}}}}
\newcommand{\Lqla}{\ensuremath{L^{q_{1,l}}}}
\newcommand{\Lqlb}{\ensuremath{L^{q_{2,l}}}}
\newcommand{\Lplc}{\ensuremath{L^{p_{i,l}}}}
\newcommand{\Lqlc}{\ensuremath{L^{q_{i,l}}}}

\newcommand{\dW}{\ensuremath{dW}}

\newcommand{\oqs}{\ensuremath{\oq^\star}}
\newcommand{\Loqs}{\ensuremath{L^{\oqs}}}
\newcommand{\vpg}{\ensuremath{\varphi_\gamma}}
\newcommand{\vpgb}{\ensuremath{\varphi_{\gamma,\beta}}}

\newcommand{\Sspace}{\ensuremath{W^{1,q_{l}}_{\loc}(\R^N)}}
\newcommand{\Wpla}{\ensuremath{W^{1,p_{1,l}}}}
\newcommand{\Wqla}{\ensuremath{W^{1,q_{1,l}}}}
\newcommand{\Wplb}{\ensuremath{W^{1,p_{2,l}}}}
\newcommand{\Wqlb}{\ensuremath{W^{1,q_{2,l}}}}
\newcommand{\Wplc}{\ensuremath{W^{1,p_{i,l}}}}
\newcommand{\Wqlc}{\ensuremath{W^{1,q_{i,l}}}}

\newcommand{\CMJ}{\ensuremath{\mathcal{M}_j}}
\newcommand{\CMJl}{\ensuremath{\mathcal{M}_{j,l}}}
\newcommand{\CMJa}{\ensuremath{\mathcal{M}_{j,1}}}
\newcommand{\CMJb}{\ensuremath{\mathcal{M}_{j,2}}}
\newcommand{\CMJc}{\ensuremath{\mathcal{M}_{j,3}}}

\newcommand{\CMI}{\ensuremath{\mathcal{M}_i}}
\newcommand{\CME}{\ensuremath{\mathcal{M}_e}}
\newcommand{\Set}[1]{\ensuremath{\left\{#1\right\}}}

\newcommand{\pt}{\ensuremath{\partial_t}}
\newcommand{\pxl}{\ensuremath{\partial_{x_l}}}

\newcommand{\ailtuepa}
{\ensuremath{{a}_{1,l}\left(t,x,\frac{\partial
			u_{1,\eps}}{\partial x_l})}}
\newcommand{\tailtuepa}
{\ensuremath{\tilde{a}_{1,l}\left(t,x,\frac{\partial
			u_{1,\eps}}{\partial x_l}\right)}}
\newcommand{\tailtuatg}
{\ensuremath{\tilde{a}_{1,l}\left(t,x,\frac{\partial
			T_\gamma(u_1)}{\partial x_l}\right)}}
\newcommand{\tailtuepatm}
{\ensuremath{\tilde{a}_{1,l}\left(t,x,\frac{\partial
			T_M(u_{1,\eps})}{\partial x_l}\right)}}
\newcommand{\tailtuepatg}
{\ensuremath{\tilde{a}_{1,l}\left(t,x,\frac{\partial
			T_\gamma(u_{1,\eps})}{\partial x_l}\right)}}
\newcommand{\ailtuepb}
{\ensuremath{{a}_{2,l}\left(t,x,\frac{\partial
			u_{2,\eps}}{\partial x_l}\right)}}
\newcommand{\ailtuepc}
{\ensuremath{{a}_{i,l}\left(t,x,\frac{\partial
			u_{i,\eps}}{\partial x_l}\right)}}
\newcommand{\ailtpa}
{\ensuremath{{a}_{1,l}\left(t,x,\frac{\partial u_1}{\partial
			x_l}\right)}}
\newcommand{\ailtpb}
{\ensuremath{{a}_{2,l}\left(t,x,\frac{\partial u_2}{\partial
			x_l}\right)}}
\newcommand{\ailtpc}
{\ensuremath{{a}_{i,l}\left(t,x,\frac{\partial v_i}{\partial
			x_l}\right)}}
\newcommand{\pxi}{\ensuremath{\frac{\partial \varphi_{\eps,j}}{\partial x_l}}}
\newcommand{\pxiet}{\ensuremath{\frac{\partial \eta_{\mu,j}(u_1)}{\partial x_l}}}
\newcommand{\pxitm}
{\ensuremath{\frac{\partial T_M(u_{1,\eps})}{\partial x_l}}}
\newcommand{\pxitg}{\ensuremath{\frac{\partial T_\gamma(u_{1,\eps})}{\partial x_l}}}
\newcommand{\pxiutg}{\ensuremath{\frac{\partial T_\gamma(u_1)}{\partial x_l}}}
\newcommand{\Sm}{\ensuremath{\sum_{l=1}^N}}

\newcommand{\ptuea}{\ensuremath{\frac{\partial u_{1,\eps}}{\partial x_l}}}
\newcommand{\ptueb}{\ensuremath{\frac{\partial u_{2,\eps}}{\partial x_l}}}
\newcommand{\ptuec}{\ensuremath{\frac{\partial u_{i,\eps}}{\partial x_l}}}
\newcommand{\ptua}{\ensuremath{\frac{\partial u_1}{\partial x_l}}}
\newcommand{\ptub}{\ensuremath{\frac{\partial u_2}{\partial x_l}}}
\newcommand{\ptuc}{\ensuremath{\frac{\partial v_i}{\partial x_l}}}

\newcommand{\pxlu}{\ensuremath{\pxl u}}
\newcommand{\pxlv}{\ensuremath{\pxl v}}
\newcommand{\pxlue}{\ensuremath{\pxl \ue}}
\newcommand{\sll}{\ensuremath{\sum_{l=1}^{N}}}

\newcommand{\Seque}{\ensuremath{\left(\ue\right)_{0<\eps\leq 1}}}
\newcommand{\elambda}{\ensuremath{e^{-\lambda t}}}

\newcommand{\dx}{\ensuremath{\, dx}}
\newcommand{\dz}{\ensuremath{\, dz}}
\newcommand{\dt}{\ensuremath{\, dt}}
\newcommand{\ds}{\ensuremath{\, ds}}
\newcommand{\dr}{\ensuremath{\, dr}}

\newcommand{\supp}{\ensuremath{\mathrm{supp}\,}}


\def\e{{\text{e}}}
\def\N{{I\!\!N}}

\numberwithin{equation}{section} \allowdisplaybreaks
	\newenvironment{proof}{\noindent{\it Proof.}}{\hfill$\square$}
	\let\WriteBookmarks\relax
	\def\floatpagepagefraction{1}
	\def\textpagefraction{.001}
	\shorttitle{Mathematical analysis of a stochastic reaction-diffusion system modeling predator-prey interactions with prey-taxis and noises}
	\shortauthors{M. Bendahmane, H. Nzeti, J. Tagoudjeu and M. Zagour}
	
	\title [mode = title]{Mathematical analysis of a stochastic reaction-diffusion system modeling predator-prey interactions with prey-taxis and noises}
	
	
	\author{Mostafa Bendahmane$^1$}
	\fnmark
	\ead{mostafa.bendahmane@u-bordeaux.fr}
	\address{$1$ Institut de Math\'ematiques de Bordeaux, Universit\'e de Bordeaux, 33076 Bordeaux Cedex, France}

	\author{Herbert Nzeti$^2$}
	\fnmark
	\ead{nzetiherbert@yahoo.fr}
	\address{$2$, $3$ École Nationale Supérieure Polytechnique de Yaoundé,
		Universite de Yaoundé I, B.P 8390 Yaoundé, Cameroun}
	\author{Jacques Tagoudjeu$^3$}
	\fnmark
	\ead{jacques.tagoudjeu@univ-yaounde1.cm}
	
	\author{Mohamed Zagour$^4$}
	\fnmark
	\ead{m.zagour@insa.ueuromed.org}
	\address{$4$ Euromed Research Center, Euromed University of Fes, Morocco}


	\begin{abstract}
	This paper is devoted to the mathematical analysis of a nonlinear stochastic reaction-diffusion system modeling predator-prey interactions with prey-taxis and noises. Precisely, we detail the proof of the existence of weak martingale solutions by Faedo-Galerkin approximations and the stochastic compactness method. We prove the nonnegativity of solutions by a stochastic adaptation of the Stampacchia approach. Finally, we prove the uniqueness of the solution via duality technique.

	\end{abstract}
	
	\begin{keywords}
	Stochastic partial differential equation, predator-prey system, prey-taxis, martingale solutions, uniqueness.
	\end{keywords}
	
\maketitle

\section{Introduction}

Population dynamics of prey-predator are one of the central themes of ecosystems to explain the evolution of organisms. The dynamic relationship between predators and their prey has been around for a long time as explained in  \cite{Berryman}. It is one of the dominant themes in ecology and mathematical ecology thanks to its universal existence and importance. Indeed, various mathematical models have been proposed to describe such a predator-prey relationship to predict long-term outcomes and impact on the whole ecosystem \cite{WWS2018}.  For instance, the pioneer Lotka-Volterra model is used to describe the dynamics of biological systems in which two prey and predator species interact \cite{Nint3}. The initial Lotka-Volterra model received many improvements, the most notable being the proper design of prey growth functions and the introduction of several functional responses ( see \cite{BCHN16, DS13} and their references).

Mathematical studies of the models of population dynamics have attracted many scientific interests and shown many essential features such as pattern formations that are commonly observed in natural ecological systems, more details can be found in \cite{YWS2009} and references therein. Moreover, it has been observed that several living species possess the ability to detect stimulating signals in the environment and therefore to adjust their movements. This phenomenon is known as taxis and has been studied by many authors, see for example \cite{BB5, DL21, JJJ21, OB12}. Mathematical models of a deterministic predator-prey system with prey-taxis have been proposed in \cite{ABN08,KO87}. Its different extensions have been studied in many works, see for instance \cite{DL20, JW17, RB21}.
In the case of predator-prey interactions, the mechanism of taxis is characterized by chase and flight, in which the predators move in the direction of the prey distribution gradient, called "prey-taxis", and/or the prey move opposite to the distribution of predators known as "predator-taxis", see \cite{WWS2018}. Thus, the prey-taxis describes the movement of predators towards the area with higher-density of prey population, playing a key role in biological control  and in ecological balance such as regulating prey population or incipient outbreaks of prey or forming large-scale aggregation for survival \cite{G1998,MCC1985,det23}. 

 As it is known, biological systems are subject to environmental fluctuations. Thus, the deterministic models have some limitations \cite{BC05, RW12}. Indeed, the explicit incorporation of stochasticity can fundamentally change and renormalize the behavior of the interacting species \cite{DMPT}.  Therefore, the basic mechanism and factors of population growth such as resources and vital rates-birth, and emigration-change non deterministically due to continuous fluctuations in the environment (e.g. variation in intensity of sunlight, water level) \cite{May01}. These fluctuations can be modeled by incorporating into the deterministic system  multiplicative noise sources which can effectively reproduce experimental data in population dynamic (see \cite{[BTZ22], DMPT, NY21} and the reference therein). Consequently, stochastic differential equations (SDEs) or stochastic partial differential equations (SPDEs) have attracted widespread scientific attention in population dynamics.

Several papers have investigated interesting mathematical properties of deterministic prey-predator models such as well-posedness, the positivity of solution, longtime dynamic behavior such as existence and uniqueness of stationary distribution, and optimal harvesting strategy, see \cite{HSWZ21, NY17, RA20, R03, Tao10, TY15, XY18}. In the case of the stochastic spatially dependent predator-prey models, without prey-taxis term, the authors in \cite{NY21, NY20} obtained the well-posedness and investigated the regularity of the solutions, the existence of density, the existence of an invariant measure for a stochastic reaction-diffusion system with non-Lipschitz and non-linear growth coefficients and multiplicative noise. Moreover, they have studied the existence and uniqueness, using the notion of a mild solution, and have derived sufficient conditions for persistence and extinction.

In this paper, we aim to study the mathematical analysis of the following nonlinear stochastic predator-prey system with prey-taxis:
\begin{equation}\label{S7}
	\left\{\begin{array}{rcl}
		\displaystyle d u_1 - d_1 \Delta u_1 \dt
		+\Div (\chi(u_1 )\nabla u_2)\dt=F_1(u_1,u_2)\dt
		+\sigma_{u_1}(u_1,u_2) \dW_{u_1}(t),
		{}\\
		\displaystyle du_2- d_2 \Delta u_2 \dt=F_2(u_1,u_2)\dt
		+\sigma_{u_2}(u_1,u_2) \dW_{u_2}(t),
	\end{array}\right.
\end{equation}
in $\Om_T$,
where $\Om_T:=\Omega\times (0,T)$, $T>0$ is a fixed time, and $\Omega$ is a bounded domain in $\mathbb{R}^N$ ($N=2$ or $3$), with smooth boundary $\partial \Omega$ and outer unit normal $\eta$.
In system \eqref{S7}, the functions $F_1$ and $F_2$ have the following form
\begin{equation}\label{reaction}
	\begin{split}
		&F_1(u_1,u_2)=e \pi(u_2)u_1-a u_1,\\
		&F_2(u_1,u_2)=k(u_2)-\pi(u_2)u_1.
	\end{split}
\end{equation}
The diffusion coefficients are denoted by $d_1$ and $d_2$. The coefficient $e$ is the conversion rate from prey to predator and $-a$ ($a>0$) be the natural exponential decay of the predator population. We consider the logistical growth rate of prey $k(u_2)=ru_2(1-\frac{u_2}{K})$, with $r > 0$ being the natural growth rate of prey and $K$ be the carrying capacity, and the predation rate $\pi(u_2)=p\,u_2/(1+q \,u_2)$ with $1/p$ the time spent by a predator to catch a prey and $q /p$ the manipulation time, offering a saturation effect for large densities of prey when $q > 0$. The predators are attracted by the prey and $\chi$ denotes their prey-tactic sensitivity. We assume that there exists a maximal density of their of predators, the threshold $u_m$, such that $\chi(u_m)=0$. This threshold condition can be interpreted as follows: the predators stop to accumulate at a given point of after their density attains certain threshold values while the prey-tactic cross-diffusion $\chi(u_1)$ vanishes identically whenever $u_1\geq u_m$. Therefore,
\begin{equation}\label{S3}
	\chi \in C^1(\mathbb{R}), \chi(u_1)=u_1(u_m-u_1)~~\textrm{if}~~0\leq u_1\leq u_m~~\textrm{and}~~\chi(u_1)=0~~\textrm{if no}.
\end{equation}
For our mathematical study we need to extend
the definitions of $F_1$ and $F_2$
to all $u_1,u_2 \in \R$. We do this by
assuming the following
\begin{equation}\label{entries-positive}
	\begin{split}
		F_1(u_1,u_2)=
		\begin{cases}
			e \pi(u_2)u_1-a u_1,
			& \text{if $u_1,u_2\ge 0$},\\
			-a u_1 ,
			& \text{if $u_1\ge 0$ and $u_2<0$},\\
			0, & \text{if $u_1< 0$ and $u_2\geq 0$ or $u_1,u_2<0$},
		\end{cases}
		\\
		F_2(u_1,u_2)=
		\begin{cases}
			k(u_2)-\pi(u_2)u_1,
			& \text{if $u_1,u_2\ge 0$},\\
			0, & \text{if $u_1\ge 0$ and $u_2<0$ or $u_1,u_2<0$},\\
			k(u_2),
			& \text{if $u_1< 0$ and $u_2\geq 0$}.
		\end{cases}	
	\end{split}
\end{equation}

In system \eqref{S7}, $W_{u_i}$ is a cylindrical Wiener process, with noise amplitude function $\sigma_{u_i}$ for $i= 1, 2$.
Formally one can consider $\sigma_{u_i}(u_1,u_2)\, dW_{u_i}$ as
$\sum_{k\ge1}\sigma_{{u_i},k}(u_1,u_2) \, d W_{k,{u_i}}(t)$, where $\{W_{k,{u_i}}\}_{k\ge 1}$
is a sequence of independent 1D Brownian motions and $\{\sigma_{{u_i},k}\}_{k\ge 1}$
a sequence of noise coefficients. Note that the noises $\dW_{u_1}$ and $\dW_{u_2}$ represent the independent environmental variables. Moreover, $\sigma_{u_1}(u_1,u_2) \dW_{u_1}$ and $\sigma_{u_2}(u_1,u_2) \dW_{u_2}$ model random perturbations of the stochastic predator-prey system with prey-taxis \eqref{S7}. 

\noindent We augment system (\ref {S7}) with no-flux boundary conditions on
$\Sigma_T:=\partial \Omega\times (0,T)$,
\begin{equation}\label{S5}
	\frac{\partial u_1}{\partial \eta}=0,\qquad \frac{\partial u_2}{\partial \eta}=0,
\end{equation}
and initial distributions in $\Omega$:
\begin{equation}\label{S6}
	u_1(x,0)=u_{1,0}(x),\quad u_2(x,0)=u_{2,0}(x).
\end{equation}

Let us now comment on the contribution of this paper. First, as the proposed system \eqref{S7} contains strong coupling in the highest derivative, the standard theory for stochastic parabolic systems can not apply naturally. Moreover, a stochastic forcing term complicates the maximum principle approach. The existence result for our system is based on martingale solutions and on the introduction of suitable approximate (Faedo-Galerkin) solutions. A series of system-specific a priori estimates are derived for the Faedo-Galerkin approximations and a compactness method to conclude convergence is used. In addition, as the structure of system \eqref{S7} is nonlinear, this requires strong convergence of the approximate solutions in suitable norms. We establish weak compactness of the probability laws of the approximate solutions, which follows from tightness and Prokhorov's theorem to deduce strong convergence in the probability variable. Then we construct almost sure (a.s.) convergent versions of the approximations using Skorokhod's representation theorem. We prove that the constructed solutions are nonnegative and uniformly bounded in $L^\infty$ according to the Stampacchia approach, see \cite{Chekroun:2016aa}. For the existence of martingale solutions for other classes of SPDEs, we refer the interested reader to \cite{[BK22],DaPrato:2014aa,Debussche:2011aa,H13,LW11,NY17,NY20,NY21}.  Finally, we prove the uniqueness of the solution via duality technique.

The paper is organized as follows:  In Section \ref{sec:stoch}, we present the stochastic framework and state the noise coefficients' hypotheses. Next, we supply the definition of a weak martingale solution and we declare our main result. Approximate solutions by the Faedo-Galerkin method is constructed in Section \ref{Sec3}. While, uniform estimates for these approximations are established in Sections \ref{Sec4}. Section \ref{Sec5} is devoted to ensure strong compactness of a sequence of Faedo-Galerkin solutions. Thus, we establish a temporal translation estimate in a space, which is enough to work out the required compactness (and tightness).
In Section \ref{Sec6}, we prove the tightness of the probability laws generated by the Faedo-Galerkin approximations. The tightness and Skorokhod's representation theorem is considered to show that a weakly convergent sequence of the probability laws has a limit that can be represented as the law of an almost surely convergent sequence of random variables defined on a common probability space. The limit of this sequence is proved to be a weak martingale solution of the stochastic system In Section \ref{Sec7}. Its nonnegativity and boundness in $L^\infty$ are deferred to Section \ref{Sec8} based on the Stampacchia method. Finally, the pathwise uniqueness result is established in Section \ref{sec:uniq}.

\section{Stochastic framework and notion of solution}\label{sec:stoch}
This section is devoted to recall some basic concepts and results from stochastic analysis (for more details see for instance \cite{barda1987optimal,prevot2007concise,karatzas1998brownian}). Next, we give the definition of a weak martingale solution to our stochastic predator-prey with prey-taxis system \eqref{S7}, \eqref{S5} and \eqref{S6}.
\subsection{Stochastic framework and notion of solution}
Let consider a complete probability space $(D,\cF, P)$, along
with a complete right-continuous filtration $\Set{\cFt}_{t\in [0,T]}$ (we assume that the $\sigma$-algebra
$\cF$ is countably generated). Equipped with the Borel $\sigma$-algebra $\cB(\bB)$, $\bB$ is a separable Banach
space. A $\bB$-valued random variable $X$ is a measurable mapping from $(D,\cF, P)$ to
$(\bB,\cB(\bB))$, $D\ni \omega\mapsto X(\omega)\in \bB$.
$\displaystyle\E[X]:=\int_{D} X\, dP$ is the expectation of a random variable $X$.\\
\noindent For $p\geq 1$, the Banach space $L^p(D,\cF,P)$ is the collection of all $\bB$-valued random variables, equipped with the following norm
\begin{align*}
	&\norm{X}_{L^p(D,\cF,P)}
	:=\left(\E\left[ \norm{X}_B^p\right]\right)^{\frac{1}{p}} \quad  (p<\infty),
	\\ &
	\norm{X}_{L^\infty(D,\cF,P)}:= \sup_{\omega\in D} \norm{X(\omega)}_B.
\end{align*}
We shall use the abbreviation
a.s.~(almost surely) for $P$-almost every $\omega\in D$.
A stochastic process $X=\Set{X(t)}_{t\in [0,T]}$ is a collection
of $\bB$-valued random variables $X(t)$. The stochastic process
$X$ is \textit{measurable} if the map $X:D\times [0,T]\to \bB$
is measurable from $\cF \times \cB([0,T])$ to $\cB(\bB)$.
The paths $t \to X (\omega,t)$ of a measurable
process $X$ are automatically Borel measurable functions.
A stochastic process $X$ is \textit{adapted} if $X(t)$ is $\cF_t$ measurable for
all $t\in [0,T]$. We refer to
\begin{equation}\label{eq:stochbasis}
	\cS=\left(D,\cF,\Set{\cFt}_{t\in [0,T]},P,\Set{W_k}_{k=1}^\infty\right)
\end{equation}
as a (Brownian) \textit{stochastic basis}, where $\Set{W_k}_{k=1}^\infty$
is a sequence of independent one-dimensional
Brownian motions adapted to the filtration $\Set{\cFt}_{t\in [0,T]}$.

Considering the Hilbert space $\U$ equipped with
a complete orthonormal basis $\Set{\psi_k}_{k\ge 1}$, we define the "cylindrical Brownian motions" $W$ on $\U$ by $W:=\sum_{k\ge 1} W_k \psi_k$.
The vector space of all bounded
linear operators from $\U$ to $\bX$ is denoted $L(\U,\bX)$, where $\bX$ is separable Hilbert space with inner product $(\cdot,\cdot)_{\bX}$ and
norm $\norm{\cdot}_{\bX}$. We denote by
$L_2(\U,\bX)$ the collection of Hilbert-Schmidt operators from $\U$ to $\bX$, that is to say,
$R\in L_2(\U,\bX)\Longleftrightarrow R\in L(\U,\bX)$ and
\begin{equation}\begin{split}\label{def:HS-norm}
		&\norm{R}_{L_2(\U,\bX)}:=\left(\sum_{k\geq 1}
		\norm{R\psi_k}_{\bX}^2\right)^{\frac12}<\infty\\
		&\left ( \hat R,\tilde R \right)_{L_2(\U,\bX)}
		=\sum_{k\ge 1} \left(\hat R \psi_k,\tilde R \psi_k\right)_X,
		\qquad \hat R, \tilde R\in L_2(\U,\bX).
\end{split}\end{equation}
Note that, for the stochastic predator-prey system with prey-taxis \eqref{S7}, a natural choice
is $\bX=L^2(\Om)$.
%
For a given a cylindrical Brownian motion $W_{u_i}$, we can define the It\^{o} stochastic
integral $\displaystyle\int \sigma_{u_i} \,dW_{u_i}$ as follows (see for e.g. \cite{DaPrato:2014aa,Prevot:2007aa}) for $i=1,2$
\begin{equation}\label{def:sint}
	\int_0^t \sigma_{u_i}\, dW_{u_i}=\sum_{k=1}^\infty \int_0^t \sigma_{{u_i},k} \,\dW_{{u_i},k},
	\qquad \sigma_{{u_i},k} := \sigma_{u_i} \psi_k,
\end{equation}
where $\sigma_{u_i}$ is a predictable $X$-valued
process satisfying
$$
\sigma_{u_i}\in L^2\Big(D,\cF,P;L^2((0,T);L_2(\U,\bX))\Big).
$$
The stochastic integral \eqref{def:sint} is an $\bX$-valued square integrable martingale,
satisfying the Burkholder-Davis-Gundy inequality
\begin{equation}\label{eq:bdg}
	\E\left[ \sup_{t\in [0,T]} \norm{\int_0^t \sigma_{u_i} \,dW_{u_i}}_{\bX}^p \right]
	\le C\, \E\left[\left(\int_0^T \norm{\sigma_{u_i}}_{L_2(\U,\bX)}^2 \dt\right)^{\frac{p}{2}} \right],
\end{equation}
for $i=1,2$, where $C>0$ is a constant depending on $p\ge 1$.

Note that since $W_{u_i}=\sum_{k\ge 1} W_{k,{u_i}} \psi_k$
is a cylindrical Brownian motion, we can give meaning to the following stochastic terms
\begin{equation}\label{eq:int-W}
	\begin{split}
		\int_{\Om} \left(\, \int_0^t  \sigma_{u_i}(u_1,u_2) \dW_{u_i}\right) \vphi \dx= \sum_{k\ge 1}\int_0^t \int_{\Om} \sigma_{{u_i},k}(u_1,u_2) \vphi \dx \dW_{{u_i},k}
		\qquad \text{for }i= 1, 2,
	\end{split}
\end{equation}
where $ \vphi\in L^2(\Om)$ and $\sigma_{{u_i},k}(u_1,u_2):=\sigma_{u_i}(u_1,u_2)\psi_k$ are real-valued functions.

We impose conditions on the noise
$\sigma_{u_i}$. For each ${u_i}\in L^2(\Om)$,
we assume that $\sigma_{u_i}(u_1,u_2):\U\to L^2(\Om)$ is defined by
$$
\sigma_{u_i}(u_1,u_2)\psi_k=\sigma_{{u_i},k}(u_1(\cdot),u_2(\cdot)),  \quad k\ge 1,\qquad \text{for }i= 1, 2,
$$
for some real-valued functions $\sigma_{{u_i},k }(\cdot,\cdot):\R^2\to \R$ that satisfy (for $i= 1, 2$)
\begin{equation}\label{eq:noise-cond}
	\begin{split}
		& \sum_{k\ge 1} \abs{\sigma_{{u_i},k }(u_1,u_2)}^2 \le
		C_\sigma \left(1+ \abs{u_1}^2+\abs{u_2}^2\right), \qquad \forall u_1,u_2\in \R,
		\\
		&\sum_{k\ge 1} \abs{\sigma_{u_i,k }(\bar u_1,\bar u_2)-\sigma_{u_ik }(\hat u_1,\hat u_2)}^2
		\le C_\sigma\Big(\abs{\bar u_1-\hat u_1}^2+\abs{\bar u_2-\hat u_2}^2\Big), \qquad \forall \bar u_1,\bar u_2, \hat u_1,\hat u_2\in \R,
	\end{split}
\end{equation}
for a constant $C_\sigma>0$. Consequently,
\begin{equation}\label{eq:noise-cond2}
	\begin{split}
		&\norm{\sigma_{u_i}(u_1,u_2)}_{L_2\left(\U,L^2(\Om)\right)}^2
		\le C_\sigma \left(1 + \norm{u_1}_{L^2(\Om)}^2+\norm{u_2}_{L^2(\Om)}^2\right),
		\quad \forall u_1,u_2\in L^2(\Om),
		\\ &
		\norm{\sigma_{u_i}(\bar u_1,\bar u_2)-\sigma_{u_i}(\hat u_1,\hat u_2)}_{L_2\left(\U,L^2(\Om)\right)}^2
		\le C_\sigma \Big(\norm{\bar u_1-\hat u_1}_{L^2(\Om)}^2+\norm{\bar u_2-\hat u_2}_{L^2(\Om)}^2\Big), \quad
		\forall \bar u_1,\bar u_2, \hat u_1,\hat u_2 \in L^2(\Om),
	\end{split}
\end{equation}
for $i= 1, 2$.

We denote by $\cB(\bA)$ the family
of the Borel subsets of $\bA$ and by $\cP(\bA)$ the family of all
Borel probability measures on $\bA$, where $\bA$ is a separable Banach (or Polish) space.
Note that, each random variable $X:D\to \bA$ induces
a probability measure on $\bA$ via the pushforward $X_\# P:=P\circ X^{-1}$.
Finally, a sequence of probability measures $\Set{\mu_n}_{n\ge1}$ on $(\bA, \cB(\bA))$ is
tight if for every $\epsilon>0$ there is a compact
set $\bK_\epsilon\subset \bA$ such that $\mu_n(\bK_\epsilon)>1-\epsilon$ for all $n\ge 1$.

\subsection{Notion of solution and existence results}\label{sec:defsol}

We start by giving the definition of a weak martingale solution. Next, we state our existence results.

\begin{defi}[Weak martingale solution] \label{def:martingale-sol}
	Let $\mu_{u_{1,0}}$ and $\mu_{u_{2,0}}$ be probability measures on $L^2(\Om)$.
	A weak martingale solution of the stochastic predator-prey-taxis system \eqref{S7}, \eqref{S5} and \eqref{S6}, is a collection $\bigl(\cS,u_1,u_2\bigr)$
	satisfying
	\begin{enumerate}
		\item\label{eq:mart-sto-basis}
		$\cS=\left(D,\cF,\Set{\cFt}_{t\in [0,T]},P,\Set{W_{k,u_1}}_{k=1}^\infty,
		\Set{W_{k,u_2}}_{k=1}^\infty\right)$ is a stochastic basis;
		
		\item \label{eq:mart-wiener}
		$W_{u_1}:=\sum_{k\ge 1} W_{k,u_1} \psi_k$ and
		$W_{u_2}:=\sum_{k\ge 1} W_{k,u_2} \psi_k$ are
		two independent  cylindrical Brownian motions, adapted
		to the filtration $\Set{\cFt}_{t\in [0,T]}$;
		
		\item\label{eq:mart-uiue-reg}
		For $P$-a.e.~$\omega\in D$, $u_1(\omega),u_2(\omega)$ are nonnegative and\\
		$u_1(\omega),u_2(\omega)
		\in L^\infty\big((0,T);L^2(\Om)\big)\cap L^\infty(\Om_T)\cap L^2\big((0,T);\tH(\Om)\big)$.
		
		\item\label{eq:mart-data}
		The laws of $u_{1,0}:=u_{1}(0)$ and $u_{2,0}:=u_{2}(0)$ are
		respectively $\mu_{u_{1,0}}$ and $\mu_{u_{2,0}}$:
		$$
		P\circ u_{1,0}^{-1}=\mu_{u_{1,0}}, \qquad
		P\circ u_{2,0}^{-1}=\mu_{u_{2,0}};
		$$
		
		\item\label{eq:mart-weakform}
		The following identities hold $P$-almost surely, for any $t \in [0,T]$
		\begin{equation}\label{eq:weakform}
			\begin{split}
				& \int_{\Om} u_1(t) \vphi_{u_1} \dx
				+ d_1\int_0^t\int_{\Om} \Grad u_1 \cdot \Grad \vphi_{u_1}  \dx\ds - \int_0^t\int_{\Om} \chi(u_1)  \Grad u_2 \cdot \Grad \vphi_{u_1}  \dx\ds
				\\ &
				= \int_{\Om} u_{1,0} \, \vphi_{u_1} \dx +\int_0^t \int_{\Om}F_1(u_1,u_2)\vphi_{u_1}  \dx\ds
				+  \int_0^t \int_{\Om} \sigma_{u_1}(u_1,u_2) \vphi_{u_1} \dx  \dW_{u_1} (s),
				\\ &
				\int_{\Om} u_2(t)  \vphi_{u_2}\dx+d_2\int_0^t\int_{\Om} \Grad u_2\cdot \Grad \vphi_{u_2} \dx\ds
				\\ &
				=\int_{\Om} u_{2,0} \vphi_{u_2} \dx+\int_0^t \int_{\Om} F_2(u_1,u_2) \vphi_{u_2}\dx\ds
				+ \int_0^t \int_{\Om} \sigma_{u_2}(u_1,u_2)  \vphi_{u_2} \dx \dW_{u_2}(s),
			\end{split}
		\end{equation}
		\noindent for all $\vphi_{u_1},\vphi_{u_2} \in H^{1}(\Omega)$.
	\end{enumerate}
\end{defi}

Our main result is the following existence and uniqueness theorem for weak solutions.
\begin{thm}[Existence of weak martingale solution]\label{thm}
	Assume \eqref{S3} and \eqref{eq:noise-cond} hold and the initial condition $(u_{1,0},u_{2,0})$ is nonnegative
and bounded in $L^\infty$. Let $\mu_{u_{1,0}}$, $\mu_{u_{2,0}}$
	be probability measures satisfying
	\begin{equation}\label{cond-init}
		\int_{L^2(\Om)} \norm{u_i}^r_{L^2(\Om)} d \mu_{u_{ i,0}}(u_i)<+\infty \qquad  \text{ for $i= 1, 2$ and $r>2$}.
	\end{equation}
	Then the stochastic predator-prey-taxis system \eqref{S7}, \eqref{S5} and \eqref{S6}
	possesses a unique weak martingale solution in the
	sense of Definition \ref{def:martingale-sol}.
\end{thm}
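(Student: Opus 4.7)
The plan is to follow the roadmap announced in the introduction: build Galerkin approximants, derive uniform stochastic energy estimates, use tightness plus Skorokhod to produce an a.s.-convergent sequence on a new stochastic basis, pass to the limit (including in the cross-diffusion and the stochastic integrals), and then separately establish nonnegativity/$L^\infty$ bounds via a Stampacchia argument and uniqueness via duality.

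\textbf{Galerkin scheme and a priori estimates.} First I would fix a Hilbertian basis $\{e_j\}_{j\ge 1}$ of $L^2(\Om)$ consisting of smooth eigenfunctions of $-\Delta$ with Neumann data (so that the basis is orthonormal in $L^2$ and orthogonal in $H^1$), and look for $u_{i,n}(t,x)=\sum_{j=1}^{n} c_{i,j}^{n}(t) e_j(x)$. Projecting \eqref{S7} onto $\Span\{e_1,\ldots,e_n\}$ yields a finite system of SDEs; since $\chi$ is bounded with bounded derivative by \eqref{S3}, $F_1,F_2$ have at most quadratic growth, and $\sigma_{u_i}$ satisfies \eqref{eq:noise-cond2}, a localization/truncation argument (e.g.\ cutting $\chi,F_i,\sigma_{u_i}$ at level $R$ and letting $R\to\infty$ after the estimates) produces a global strong solution $(u_{1,n},u_{2,n})$ on the original stochastic basis $\cS$. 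Applying It\^o's formula to $\tfrac12\norm{u_{i,n}}_{L^2(\Om)}^2$ and using the key cancellation from testing the prey-taxis term with $u_{1,n}$ combined with Young's inequality (to absorb $\int \chi(u_{1,n})\Grad u_{2,n}\cdot \Grad u_{1,n}\dx$ into the diffusive dissipation), I get the uniform bounds
\begin{equation*}
\E\Bigl[\sup_{t\in[0,T]}\bigl(\norm{u_{1,n}}_{L^2(\Om)}^{r}+\norm{u_{2,n}}_{L^2(\Om)}^{r}\bigr)\Bigr]
+\E\Bigl[\int_0^T\bigl(\norm{\Grad u_{1,n}}_{L^2(\Om)}^{2}+\norm{\Grad u_{2,n}}_{L^2(\Om)}^{2}\bigr)\dt\Bigr]^{r/2}\le C,
\end{equation*}
with $r>2$ from \eqref{cond-init}, the BDG inequality \eqref{eq:bdg} and Gronwall's lemma.

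\textbf{Tightness, Skorokhod, limit identification.} For tightness in $L^2(0,T;L^2(\Om))\times L^2(0,T;L^2(\Om))\times C([0,T];\U_0)^2$ (where $\U_0\supset \U$ is chosen so that the embedding $\U\hookrightarrow \U_0$ is Hilbert--Schmidt and the Wiener processes live continuously), I would establish a fractional-in-time estimate of the form $\E\norm{u_{i,n}}_{W^{\alpha,p}(0,T;(H^1)^\star)}\le C$ for some $\alpha\in(0,1/2)$, $p\ge 2$, by splitting the drift part (bounded in $L^2(0,T;(H^1)^\star)$ thanks to the estimates above plus the bound $|\chi(u_{1,n})\Grad u_{2,n}|\le u_m|\Grad u_{2,n}|$) from the stochastic part (handled by the BDG inequality and \eqref{eq:noise-cond2}); this, combined with the spatial $H^1$ bound, yields tightness in $L^2(0,T;L^2(\Om))$ via a Simon-type compactness embedding. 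Prokhorov then gives a weakly convergent subsequence of laws, and Skorokhod's representation theorem produces a new stochastic basis $\tilde\cS$ and processes $(\tilde u_{1,n},\tilde u_{2,n},\tilde W_{1,n},\tilde W_{2,n})$ converging $\tilde P$-a.s.\ in the above topologies to $(\tilde u_1,\tilde u_2,\tilde W_1,\tilde W_2)$. Passing to the limit in the linear drift and diffusion terms is routine from the a.s.\ strong $L^2_tL^2_x$ convergence together with the weak $L^2_tH^1_x$ convergence; the main difficulty is the nonlinear prey-taxis term $\int_0^t\int_{\Om}\chi(\tilde u_{1,n})\Grad \tilde u_{2,n}\cdot\Grad\vphi_{u_1}\dx\ds$: here the a.s.\ strong convergence of $\tilde u_{1,n}$ in $L^2_tL^2_x$ combined with the boundedness and continuity of $\chi$ gives $\chi(\tilde u_{1,n})\to \chi(\tilde u_1)$ strongly in $L^p(\tilde\Om_T)$ for every $p<\infty$, which is enough to pair with the weakly convergent $\Grad \tilde u_{2,n}$. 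The stochastic integral is identified as $\int_0^t\sigma_{u_i}(\tilde u_1,\tilde u_2)\,d\tilde W_i$ by the standard martingale characterization argument (Bensoussan), using \eqref{eq:noise-cond2} and the passage to the limit in the quadratic variation.

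\textbf{Nonnegativity, $L^\infty$ bound, uniqueness.} To show $\tilde u_i\ge 0$ a.s., I would apply the stochastic Stampacchia approach from \cite{Chekroun:2016aa}: regularize $s\mapsto (s_-)^2$, apply It\^o's formula to the approximated functional of $\tilde u_i$, and exploit the structural assumption \eqref{entries-positive} (which makes $F_i$ vanish on the sign-negative region) together with the Lipschitz bound in \eqref{eq:noise-cond} to obtain $\E\norm{(\tilde u_i)_-}_{L^2(\Om)}^2\le C\int_0^t \E\norm{(\tilde u_i)_-}_{L^2(\Om)}^2\ds$, whence $(\tilde u_i)_-\equiv 0$ by Gronwall. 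The $L^\infty$ bound is obtained analogously by replacing the truncation $s_-$ by $(s-M)_+$ with a suitable $M$ depending on $\norm{u_{i,0}}_{L^\infty}$ and on $C_\sigma$, then running a Moser/Stampacchia iteration in the stochastic setting. Finally, for uniqueness, given two solutions $(u_1,u_2)$ and $(v_1,v_2)$ on the same stochastic basis with the same initial data, I would set $w_i:=u_i-v_i$ and test the equation for $w_2$ with $w_2$ directly (linear diffusion), while the prey-taxis term forces me to handle $w_1$ by duality: introduce the adjoint problem $-\partial_t\phi-d_1\Delta\phi=w_1$ with terminal data $\phi(T)=0$ and Neumann conditions, and use the $L^\infty$ and $H^1$ bounds on $u_i,v_i$ plus the Lipschitz hypothesis on $\sigma_{u_i}$ and on $\chi,F_i$ to deduce $\E\norm{w_1(t)}_{(H^1)^\star}^2+\E\norm{w_2(t)}_{L^2(\Om)}^2\le C\int_0^t\bigl(\E\norm{w_1(s)}_{(H^1)^\star}^2+\E\norm{w_2(s)}_{L^2(\Om)}^2\bigr)\ds$, and conclude by Gronwall.

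The main obstacle I anticipate is the passage to the limit in the cross-diffusion term $\Div(\chi(u_{1,n})\Grad u_{2,n})$ on the new probability space, because this is the only genuinely nonlinear term involving gradients; all other nonlinearities in $F_1,F_2$ are lower order and the noise is at most linear. Getting strong compactness of $\tilde u_{1,n}$ in $L^2_tL^2_x$ (rather than only weak convergence of gradients) is precisely what makes the product $\chi(\tilde u_{1,n})\Grad \tilde u_{2,n}$ pass to the limit in the distributional sense, and is the single most delicate ingredient of the whole argument.
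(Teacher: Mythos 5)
Your roadmap coincides with the paper's in all essential respects: Faedo--Galerkin projection, It\^o energy estimates with absorption of the taxis term into the diffusive dissipation, tightness plus Prokhorov and Skorokhod on a new stochastic basis, identification of the limit (with the strong $L^2_{t,x}$ convergence of $\tilde u_{1,n}$ paired against the weakly convergent $\Grad\tilde u_{2,n}$ being the crux, exactly as you say), a stochastic Stampacchia argument for nonnegativity and the $L^\infty$ bound, and an $H^{-1}$-type duality for uniqueness. The deviations in the first three blocks are standard and harmless: the paper solves the finite-dimensional SDE via weak coercivity and local weak monotonicity (Pr\'ev\^ot--R\"ockner) rather than your truncation argument; it obtains time compactness from a translation estimate $\E\bigl[\sup_{|\tau|<\delta}\norm{u_i^n(t+\tau)-u_i^n(t)}_{(H^1)^\star}\bigr]\le C\delta^{1/2}$ fed into Simon's theorem rather than a fractional $W^{\alpha,p}(0,T;(H^1)^\star)$ bound; and it identifies the stochastic integral by proving $\sigma^n_{u_i}(\tilde u_1^n,\tilde u_2^n)\to\sigma_{u_i}(\tilde u_1,\tilde u_2)$ in $L^2(0,T;L_2(\U;L^2(\Om)))$ a.s.\ and invoking Vitali, rather than the martingale-characterization route. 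For the $L^\infty$ bound, note that no Moser iteration is needed or used: the paper tests with $S_\eps(M_1-u_1^n)$ and the taxis term vanishes outright because $\chi\equiv 0$ above the threshold $u_m$ by \eqref{S3}; this structural cancellation, not an iteration, is what closes the argument.

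The one genuine gap is in your uniqueness step. The backward adjoint problem $-\partial_t\phi-d_1\Delta\phi=w_1$, $\phi(T)=0$, produces a test function $\phi(t)$ that depends on $\{w_1(s):s\in[t,T]\}$ and is therefore not adapted to the filtration; pairing it with the difference equation generates a stochastic integral with an anticipating integrand, which is not an It\^o integral, so the ensuing Gronwall estimate cannot be justified by the tools available here. The paper avoids this by using the \emph{time-instantaneous} elliptic dual: for each $t$ it solves $-\Delta\cN_{u_1}=u_1(t)$ with homogeneous Neumann data, so that $\cN_{u_1}(t)$ is a measurable function of $u_1(t)$ alone and hence adapted, and then applies It\^o's formula to $\norm{\Grad\cN_{u_1}(t)}_{L^2(\Om)}^2$ (an equivalent $(H^1)^\star$-type norm of $u_1$), combining it with a direct $L^2$ estimate for $u_2$. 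Your target inequality in $\E\norm{w_1}_{(H^1)^\star}^2+\E\norm{w_2}_{L^2}^2$ is the right one, but you should reach it through the stationary Neumann inverse rather than a terminal-value parabolic problem.
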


\section{Construction of stochastic Faedo-Galerkin solutions}
\label{Sec3}
This section is devoted to define precisely the Faedo-Galerkin
equations and prove that there exists
a solution to these equations. We start
by fixing a stochastic basis $\cS$,
cf.~\eqref{eq:stochbasis}, and $\cF_0$-measurable
initial data $u_{1,0},u_{2,0} \in L^2(D;L^2(\Om))$,
with respective laws $\mu_{u_{1,0}},\mu_{u_{2,0}}$ on $L^2(\Om)$.
We are looking for approximate solutions obtained from the
projection of \eqref{S7}, \eqref{S5} and \eqref{S6} onto a finite dimensional space
$\bX_n:=\Span\Set{e_1,\ldots,e_n}$, where the sequence
$\Set{e_\ell}_{\ell=1}^\infty$ is an orthonormal basis of $L^2(\Om)$. The $L^2$ orthogonal projection is denoted by
\begin{equation}\label{eq:L2-project}
	\Pi_n: L^2(\Om)\to \bX_n=\Span\Set{e_1,\ldots,e_n},
	\quad \Pi_n u := \sum_{\ell=1}^n
	\left( u,e_\ell\right)e_\ell.
\end{equation}
We consider the following approximations of the noise coefficients:
\begin{equation}\begin{split}\label{def-sigwkl}
		&\sigma_{u_i,k}^n(u_1^n,u_2^n)
		:=\sum_{\ell=1}^n \sigma_{u_i,k,\ell}(u_1^n,u_2^n) e_\ell,
		\quad
		\text{where}
		\\ &
		\sigma_{u_i,k,\ell}(u_1^n,u_2^n)
		:=\left(\sigma_{u_i,k}(u_1^n,u_2^n),e_\ell\right)_{L^2(\Om)},
		\quad i=1,2.
\end{split}\end{equation}
Now, let define our Faedo-Galerkin approximations
\begin{equation}\label{eq:approxsol}
	\begin{split}
		&u_1^n,u_2^n:[0,T]\to \bX_n,
		\quad
		u_{1}^n(t)=\sum_{\ell=1}^n c_{1,\ell}^n(t)e_\ell,
		\quad u_2^n(t)=\sum_{\ell=1}^n c_{2,\ell}^n (t)e_\ell,
	\end{split}
\end{equation}
where the coefficients $c_1^n=\Set{c_{1,\ell}^n(t)}_{\ell=1}^n$
and $c_2^n=\Set{c_{2,\ell}^n}_{\ell=1}^n$ are
determined such that the following
equations hold (for $\ell=1,\ldots, n$):
\begin{equation}\label{S1-Galerkin-new}
	\begin{split}
		& \left(d u_1^n, e_\ell\right)
		+ d_1 \left(\Grad u_1^n,\Grad e_\ell\right)\dt
		-\bigl ( \chi(u_1^n) \Grad u_2^n , \Grad e_\ell \bigr)\dt
		\\ & \qquad \qquad
		= \left (F_1(u_1^n,u_2^n), e_\ell \right)\dt
		+ \sum_{k=1}^n\left( \sigma^n_{u_1,k}(u_1^{n},u_2^n),
		e_\ell \right)\dW_{u_1,k}(t),
		\\ &
		\left(d u_2^n, e_\ell\right)
		+d_2\left(\Grad u_2^n, \Grad e_\ell\right) \dt
		\\ & \qquad \qquad
		= \left(F_2(u_1^n,u_2^n),e_\ell\right)\dt
		+\sum_{k=1}^n\left(\sigma^n_{u_2,k}(u_1^{n},u_2^n),
		e_\ell \right)\dW_{u_2,k}(t),
	\end{split}	
\end{equation}
and, with reference to the initial data,
\begin{equation}\label{Galerkin-initdata}
	\begin{split}
		& u_1^n(0)=u_{1,0}^n :=\sum_{\ell=1}^n c_{1,\ell}^n(0)e_\ell,
		\quad
		c_{1,\ell}^n(0):=\left(u_{1,0}^n,e_\ell\right)_{L^2(\Om)},
		\\ &
		u_2^n(0)=u_{2,0}^n :=\sum_{\ell=1}^n c_{2,\ell}^n(0) e_\ell,
		\quad
		c_{2,\ell}^n(0):=\left(u^n_{2,0},e_\ell\right)_{L^2(\Om)}.
	\end{split}
\end{equation}
Using the basic properties of the projection
operator $\Pi_n$, we obtain
\begin{equation}\label{eq:approx-eqn-integrated}
	\begin{split}
		&u_1^n(t)-u_1^n(0)
		-\int_0^t \Pi_n \left[\Div \left(d_1 \Grad u_1^n - \chi(u_1^n) \Grad u_2^n \right) \right]\ds
		\\ & \qquad\qquad
		=\int_0^t \Pi_n\left[F_1(u_1^n,u_2^n)\right]\ds+\int_0^t\sigma^n_{u_1}(u_1^{n},u_2^n) \dW^n_{u_1}(s)
		\quad \text{in $\left(H^1(\Om)\right)^\star$},
		\\ & u_2^n(t)-u_2^n(0)-d_2 \int_0^t \Pi_n \left[\Delta u_2^n \right]\ds
		\\ & \qquad\qquad =\int_0^t \Pi_n\left[F_2(u_1^n,u_2^n)\right]\ds
		+\int_0^t \sigma^n_{u_2}(u_1^n,u_2^{n})\dW^n_{u_2}(s)
		\quad \text{in $\left(H^1(\Om)\right)^\star$},
	\end{split}
\end{equation}
with initial data $u_{1,0}^n=\Pi_n u_{1,0}$
and $u_{2,0}^n=\Pi_n u_{2,0}$. Observe that
System \eqref{eq:approx-eqn-integrated}
allows to treat $u_1^n$, $u_2^n$ as stochastic
processes in $\R^n$, therefore we can apply the
finite dimensional It\^{o} formula to the Faedo-Galerkin equations.

The existence of pathwise solutions to the
finite-dimensional problem
\eqref{S1-Galerkin-new}, \eqref{Galerkin-initdata}
is given in the following lemma.

\begin{lem}\label{lem:fg-solutions}
	For each $n\in \N$, the Faedo-Galerkin equations
	\eqref{eq:approxsol}, \eqref{S1-Galerkin-new},
	\eqref{Galerkin-initdata} possess a unique adapted
	solution $(u_1^n(t), u_2^n(t))$ on $[0,T]$.
	Moreover, $u_1^n,u_2^n\in C([0,T];\bX_n)$ a.s.,
	where $\E\bigl[\norm{u_i^n(t)}_{L^2(\Om)}^2\bigr]
	\lesssim_{T,n} 1$, $\forall t\in[0,T]$, $i=1,2$.
\end{lem}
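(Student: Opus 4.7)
The plan is to reduce \eqref{S1-Galerkin-new}--\eqref{Galerkin-initdata} to a standard finite-dimensional It\^o SDE in $\R^{2n}$, invoke classical strong-solution existence under locally Lipschitz coefficients, and then close an It\^o energy estimate to rule out explosion and obtain the moment bound.

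First I would recast the system as an SDE for the coefficient vector $\mathbf{c}^n(t):=\bigl(c_{1,1}^n,\ldots,c_{1,n}^n,c_{2,1}^n,\ldots,c_{2,n}^n\bigr)^T\in\R^{2n}$. Since \eqref{S1-Galerkin-new} is already in weak form (no derivatives of the unknown are thrown onto the test function via integration by parts), substituting the ansatz \eqref{eq:approxsol} turns every deterministic integral into an explicit function $b(\mathbf{c}^n)$, and the stochastic term becomes $\sum_{k=1}^n \Sigma_{k,\ell,i}(\mathbf{c}^n)\dW_{u_i,k}(t)$ with $\Sigma_{k,\ell,i}(\mathbf{c}^n)=\bigl(\sigma_{u_i,k}(u_1^n,u_2^n),e_\ell\bigr)_{L^2(\Om)}$. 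The outcome is a standard $\R^{2n}$-valued It\^o SDE with $\cF_0$-measurable initial data. To apply classical existence theory I would check that $b$ is locally Lipschitz and $\Sigma$ is globally Lipschitz with linear growth: the Laplace part is linear; condition \eqref{S3} makes $\chi$ bounded and globally Lipschitz on $\R$, so the prey-taxis contribution is smooth in $\mathbf{c}^n$; the extended reactions $F_1,F_2$ from \eqref{entries-positive} are continuous and piecewise $C^1$, hence locally Lipschitz on $\R^2$; and \eqref{eq:noise-cond2} combined with the equivalence of norms on the finite-dimensional space $\bX_n$ yields the global Lipschitz and linear-growth bounds for $\Sigma$. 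Classical theory (see e.g.\ \cite{karatzas1998brownian,prevot2007concise}) then produces a unique $\cFt$-adapted maximal strong solution $\mathbf{c}^n\in C([0,\tau_n);\R^{2n})$ for a stopping time $\tau_n\le T$.

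To upgrade local to global existence and to derive the moment bound, I would apply It\^o's formula to $\Phi(\mathbf{c}^n)=|\mathbf{c}^n|^2=\norm{u_1^n}_{L^2(\Om)}^2+\norm{u_2^n}_{L^2(\Om)}^2$ on $[0,\tau_n^R]$, with $\tau_n^R:=\inf\Set{t:|\mathbf{c}^n(t)|\ge R}\wedge T$. Summing $2c_{i,\ell}^n$ times the $\ell$-th drift equation in \eqref{S1-Galerkin-new} over $\ell$ collapses the diffusion contribution to $-2d_i\norm{\Grad u_i^n}_{L^2(\Om)}^2$ and the prey-taxis contribution to $2\int_{\Om}\chi(u_1^n)\Grad u_2^n\cdot\Grad u_1^n\dx$. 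Using the $L^\infty$ bound on $\chi$ from \eqref{S3} together with Young's inequality, this cross term is absorbed into a fraction of the two Laplacian contributions (or, if inconvenient constants appear, handled via the equivalence of $\Grad$ and $L^2$ norms on $\bX_n$ at the cost of an $n$-dependent constant, which is admissible here). The reactions satisfy $F_1(u_1,u_2)u_1\le Cu_1^2$ and $F_2(u_1,u_2)u_2\le C(1+u_2^2)$ pointwise in every quadrant: on $\Set{u_1,u_2\ge 0}$ the cubic $-ru_2^3/K$ and the functional response $-pu_1u_2^2/(1+qu_2)$ both carry favourable signs, while the extensions in \eqref{entries-positive} kill or preserve the sign elsewhere. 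The It\^o correction is bounded by $C(1+|\mathbf{c}^n|^2)$ through \eqref{eq:noise-cond2}. Taking expectation, applying the Burkholder-Davis-Gundy inequality to the martingale part and Gr\"onwall's lemma yields $\E\bigl[\sup_{t\in[0,\tau_n^R]}|\mathbf{c}^n(t)|^2\bigr]\lesssim_{T,n}1$ uniformly in $R$, so letting $R\to\infty$ forces $\tau_n=T$ a.s.\ and gives the claimed moment bound. Path continuity in $\bX_n$ is inherited from that of the $\R^{2n}$-valued SDE solution.

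The main obstacle is the prey-taxis cross term: it can be absorbed only thanks to the global $L^\infty$ bound on $\chi$ built into the cutoff \eqref{S3}, without which the contribution would exhibit cubic growth that no available gradient dissipation could absorb. The sign discipline prescribed by \eqref{reaction}--\eqref{entries-positive} is equally essential, since only the correct sign of the logistic cube allows the Gr\"onwall argument to close against the quadratic term $ru_2^2$ that otherwise survives in the estimate. Everything else reduces to standard finite-dimensional It\^o calculus.
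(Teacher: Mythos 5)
Your proposal is correct, and the underlying computations (H\"older/Young on the taxis term, the structure of the extended reactions, the noise conditions \eqref{eq:noise-cond2}) coincide with what the paper actually verifies; what differs is the existence framework you invoke and how non-explosion is handled. The paper recasts the system as an SDE for $C^n=(u_1^n,u_2^n)$ in $\bX_n\times\bX_n$ and then checks the \emph{weak coercivity} bound \eqref{weak-coercivity-bis} together with the \emph{local weak monotonicity} bound \eqref{weak-monotonicity}, so that existence, uniqueness and global-in-time solvability all come at once from the Pr\'ev\^ot--R\"ockner theorem (their Theorem 3.1.1); in that route uniqueness is encoded in the monotonicity inequality and non-explosion in the coercivity inequality, and no stopping-time argument appears explicitly. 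You instead use the classical locally-Lipschitz-coefficient theory to get a unique maximal solution and then close the It\^o energy estimate on $|\mathbf{c}^n|^2$ up to the stopping times $\tau_n^R$, letting $R\to\infty$ to exclude explosion. Your route is more elementary and self-contained, and it is arguably more careful on one point: the paper asserts ``global Lipschitz continuity of $F_1,F_2$'', which is not literally true since $k(u_2)$ is quadratic, whereas your ``locally Lipschitz plus one-sided bounds $F_1(u_1,u_2)u_1\le Cu_1^2$ and $F_2(u_1,u_2)u_2\le C(1+u_2^2)$'' is exactly what the extensions \eqref{entries-positive} deliver and is all the Gr\"onwall step needs. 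The price is that your energy estimate partly duplicates the work of Lemma \ref{lem:apriori-est}; the paper's monotonicity/coercivity packaging avoids that duplication at the cost of citing a heavier black box. Your remark that the taxis cross term can be absorbed with an $n$-dependent constant via norm equivalence on $\bX_n$ is legitimate for this lemma, since the claimed bound is only $\lesssim_{T,n}1$.
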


\begin{proof}
	We are looking for a stochastic process $C^n$ taking values
	in $\bX_n\times \bX_n$ solution to the following system of
	stochastic differential equations
	\begin{equation}\label{S1-Galerkin-bis}
		\begin{split}
			d C^n= M(C^n)\dt
			+\Gamma(C^n)\dW^{n},
		\end{split}
	\end{equation}
	where $C^n=\begin{pmatrix}
		u_1^n \\
		u_2^n
	\end{pmatrix}$,
	$M(C^n)=
	\begin{pmatrix}
		A_{u_1}\left(C^n\right)\\
		A_{u_2}\left(C^n\right)
	\end{pmatrix}$,
	\begin{align*}
		& A_{u_1}\left(C^n\right)
		= -\Pi_n\Div\Big(d_1 \Grad u_1^n - \chi(u_1^n) \Grad u_2^n \Big)
		+\Pi_n F_1(u_1^n,u_2^n),
		\\
		& A_{u_2}\left(C^n\right)
		= -\Pi_n\Div  \left(d_2 \Grad u_2^n \right)
		+\Pi_n F_2(u_1^n,u_2^n).
	\end{align*}
	and
	$$\Gamma(C^n)\dW^n:=
	\begin{pmatrix}
		\sigma^n_{u_1}\left(u_1^n,u_2^n\right)\dW_{u_1}^n\\
		\sigma^n_{u_2}\left(u_1^n,u_2^n\right)\dW_{u_2}^{n}
	\end{pmatrix}
	.
	$$
	We complete system \eqref{S1-Galerkin-bis} with
	initial data $C^n(0)=C_0^n$, where $C_0^n$
	is the vector given by \eqref{Galerkin-initdata}.
	\noindent Exploiting the global Lipschitz continuity of $F_1,F_2,\Gamma$, we deduce easily
	the weak coercivity condition:
	for all
	$C=\begin{pmatrix}
		u_1 \\
		u_2
	\end{pmatrix}
	\in \bX_n \times \bX_n$,
	\begin{equation}\label{weak-coercivity-bis}
		2\bigl(M(C),C\bigr)
		+\norm{\Gamma(C)}_{L^2(\Om)}^2
		\leq K \left(1+\norm{C}_{L^2(\Om)}^2\right),
	\end{equation}
	for some constant $K>0$. Next step is to prove the following local weak monotonicity:
	for all
	$C_1=\begin{pmatrix}
		u_1 \\
		u_2
	\end{pmatrix}\in \bX_n \times \bX_n$ and
	$C_2=\begin{pmatrix}
		\tilde u_1 \\
		\tilde u_2
	\end{pmatrix}\in \bX_n \times \bX_n$ such that
	$\norm{u_i^n}_{L^2(\Om)},
	\norm{\tilde u_i^n}_{L^2(\Om)}\leq r$,
	for any $r>0$ and $i=1,2$, we have
	\begin{equation}\label{weak-monotonicity}
		\begin{split}
			& 2 \bigl( M(C_1)-M(C_2),C_1-C_2\bigr)
			+\norm{\Gamma(C_1)-\Gamma(C_2)}_{L^2(\Om)}^2
			\\ & \qquad
			\leq K(r)\norm{C_1-C_2}_{L^2(\Om)}^2,
		\end{split}
	\end{equation}
	for a constant $K(r)$ that may depend on $r$, where
	$(\cdot,\cdot)$ denotes the $L^2(\Om)$ inner product. To do this we fix a real number $r>0$ and we set
	$U_1:=u_1-\tilde u_1$ and
	$U_2 :=u_2-\tilde u_2$, where $u_i,\tilde u_i$
	are arbitrary functions in $\bX_n$ for which
	$\norm{u_i}_{L^2(\Om)},\norm{\tilde u_i}_{L^2(\Om)}\leq r$
	for $i=1,2$. Thanks to Young's inequality, we have the following equality
	\begin{equation}\label{eq:mon-check}
		\bigl( M(C_1)-M(C_2),C_1-C_2\bigr)
		+\norm{\Gamma(C_1)-\Gamma(C_2)}_{L^2(\Om)}^2
		= \sum_{i=0}^5 I_i,
	\end{equation}
	where $I_0=\norm{\Gamma(C_1)
		-\Gamma(C_2)}_{L^2(\Om)}^2
	\overset{\eqref{eq:noise-cond2}}{\lesssim}
	\norm{C_1-C_2}_{L^2(\Om)}^2$ and
	\begin{align*}
		&I_1 = - \sum_{i=1,2}
		d_i\bigl( \Grad U_i,
		\Grad U_i\bigr) \leq 0,
		\\ & I_2 =
		\left(
		\begin{pmatrix}
			\chi(u_1) \Grad U_1
			\\
			0
		\end{pmatrix},
		\begin{pmatrix}
			\Grad U_1
			\\
			\Grad U_2
		\end{pmatrix}\right),
		\\ & I_3= \left(
		\begin{pmatrix}
			\Bigl(\chi(u_1)-\chi(\tilde u_1)\Bigr)  \Grad \tilde u_2
			\\
			0
		\end{pmatrix},
		\begin{pmatrix}
			\Grad U_1
			\\
			\Grad U_2
		\end{pmatrix}
		\right),
		\\ & I_4=
		\bigl(F_1(u_1,u_2)-F_1(\tilde u_1,\tilde u_2),
		U_1\bigr),
		\quad I_5=
		\bigl (F_2(u_1,u_2)-F_2(\tilde u_1,\tilde u_2),
		U_2\bigr).
	\end{align*}
	According to \eqref{S3} and H\"older inequality, we obtain
	\begin{align*}
		\abs{I_3}
		& \lesssim
		\norm{u_1-\tilde u_1}_{L^2(\Om)}
		\norm{\Grad \tilde u_2}_{L^4(\Om)}
		\norm{\Grad U_1}_{L^4(\Om)}
		\\ & \lesssim \norm{u_1-\tilde u_1}_{L^2(\Om)}
		\norm{\Grad \tilde u_2}_{H^1(\Om)}
		\norm{\Grad U_1}_{H^1(\Om)},
	\end{align*}
	thus $\displaystyle\abs{I_3}	\lesssim_{r,n}
	\sum_{i=1,2}\norm{u_i-\tilde u_i}_{L^2(\Om)}$.
	On the basis of the global Lipschitz continuity
	of the reaction functions $F_1$ and $F_2$,
	cf.~\eqref{reaction}, we have the following estimate
	$$
	\abs{I_4}+\abs{I_5}\lesssim
	\sum_{i=1,2}\norm{u_i-\tilde u_i}_{L^2(\Om)}
	\sum_{i=1,2}\norm{U_i}_{L^2(\Om)},
	$$
	thus $\abs{I_4}+\abs{I_5}\lesssim_{r}
	\sum_{i=1,2}\norm{u_i-\tilde u_i}_{L^2(\Om)}$.
	According to \eqref{eq:mon-check}, we obtain
	$\sum_{i=0}^5 I_i \lesssim_{r,n}
	\norm{C_1^n-C_2^n}_{L^2(\Om)}^2$, and
	\eqref{weak-monotonicity} is achieved.
	Finally the existence and uniqueness of a pathwise
	solution to \eqref{S1-Galerkin-bis} is a consequence of \eqref{weak-coercivity-bis}
	and \eqref{weak-monotonicity} (see for more details, \cite[Theorem 3.1.1]{Prevot:2007aa}).

\end{proof}

\section{Basic a priori estimates}\label{Sec4}

This section provides a series of basic
energy-type estimates.

\begin{lem}\label{lem:apriori-est}
	Let $u_1^n(t),u_2^n(t)$, $t\in [0,T]$,
	satisfy \eqref{S1-Galerkin-new}, \eqref{Galerkin-initdata}.
	There is a constant $C>0$, independent
	of $n$, such that
	\begin{align}
		& \E \left[ \norm{u_1^n(t)}_{L^2(\Om)}^2\right]
		+\E \left[ \norm{u_2^n(t)}_{L^2(\Om)}^2\right]
		\le C, \qquad \forall t\in [0,T];
		\label{Gal:est1}
		\\ &
		\E \left[\int_0^T \int_{\Om}\abs{\Grad u_1^n}^2
		\dx \dt \right]
		+\E \left[\int_0^T \int_{\Om} \abs{\Grad u_2^n}^2
		\dx\dt \right] \le C;
		\label{Gal:est2}
		\\ & \E \left[ \sup_{t\in [0,T]}
		\norm{u_1^n(t)}_{L^2(\Om)}^2\right]
		+\E \left[ \sup_{t\in [0,T]}
		\norm{u_2^n(t)}_{L^2(\Om)}^2\right]\le C.
		\label{Gal:est3}
	\end{align}
\end{lem}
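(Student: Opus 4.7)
The plan is to apply the finite–dimensional It\^o formula to $\norm{u_i^n(t)}^2_{L^2(\Om)}$ for $i=1,2$, form a suitable weighted combination that neutralises the prey-taxis cross term, and then close by Gronwall and Burkholder–Davis–Gundy. Since each $e_\ell$ can be chosen as a Neumann eigenfunction of $-\Delta$, integration by parts (with vanishing boundary contribution) and self-adjointness of $\Pi_n$ give, from \eqref{S1-Galerkin-new},
\begin{align*}
d \norm{u_1^n}^2_{L^2} + 2d_1 \norm{\Grad u_1^n}^2_{L^2}\dt
&= 2\!\int_\Om \chi(u_1^n)\Grad u_1^n\cdot\Grad u_2^n\dx\dt
+ 2\!\int_\Om F_1(u_1^n,u_2^n) u_1^n\dx\dt \\
&\quad + \norm{\sigma^n_{u_1}(u_1^n,u_2^n)}_{L_2(\U,\bX_n)}^2\dt + dM_1(t),
\end{align*}
and the analogous identity for $u_2^n$ (with no cross term). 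Here $M_i$ is the usual real-valued It\^o martingale $2\sum_{k=1}^n\int_0^\cdot\!\int_\Om \sigma^n_{u_i,k}u_i^n\dx\dW_{u_i,k}$.

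The routine bounds are as follows. A case analysis on \eqref{entries-positive}, together with $\pi(u_2)\le p/q$ for $u_2\ge 0$ and the negative cubic contribution of $k(u_2)u_2$, yields $F_1(u_1,u_2)u_1 + F_2(u_1,u_2)u_2 \lesssim u_1^2 + u_2^2$ pointwise; and \eqref{eq:noise-cond2} controls the It\^o corrections by $C_\sigma(1+\norm{u_1^n}^2_{L^2}+\norm{u_2^n}^2_{L^2})$. For the prey-taxis term, \eqref{S3} gives $\norm{\chi}_\infty \le u_m^2/4$, whence Young's inequality yields
\begin{equation*}
\Bigl|2\!\int_\Om \chi(u_1^n)\Grad u_1^n\cdot\Grad u_2^n\dx\Bigr|
\le d_1 \norm{\Grad u_1^n}_{L^2}^2 + \frac{\norm{\chi}_\infty^2}{d_1}\norm{\Grad u_2^n}_{L^2}^2 .
\end{equation*}
The key trick, to avoid any smallness assumption, is to work with the weighted quantity $E_n(t):=\norm{u_1^n(t)}^2_{L^2}+\lambda\norm{u_2^n(t)}^2_{L^2}$ for some $\lambda>\norm{\chi}_\infty^2/(2d_1d_2)$; then the combined inequality reads
\begin{equation*}
dE_n + d_1\norm{\Grad u_1^n}_{L^2}^2\dt + \Bigl(2\lambda d_2-\tfrac{\norm{\chi}_\infty^2}{d_1}\Bigr)\norm{\Grad u_2^n}_{L^2}^2\dt
\le C\bigl(1+E_n\bigr)\dt + dM_1 + \lambda\, dM_2 ,
\end{equation*}
with both gradient coefficients strictly positive. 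Taking expectation (the martingales vanish), using that $\E\norm{u^n_{i,0}}^2\le\E\norm{u_{i,0}}^2$ is finite by \eqref{cond-init}, and applying Gronwall gives \eqref{Gal:est1} and \eqref{Gal:est2}.

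For the pathwise sup-in-time bound \eqref{Gal:est3}, I would first take $\sup_{s\in[0,t]}$ before expectation and then invoke BDG \eqref{eq:bdg}:
\begin{equation*}
\E\sup_{s\in[0,t]}\abs{M_i(s)}
\le C\,\E\!\left[\int_0^t \norm{\sigma^n_{u_i}(u_1^n,u_2^n)}^2_{L_2(\U,L^2)}\norm{u_i^n}^2_{L^2}\ds\right]^{1/2}
\le \tfrac{1}{4}\E\sup_{s\in[0,t]}\norm{u_i^n(s)}^2_{L^2} + C\,\E\!\int_0^t\!\bigl(1+\norm{u_1^n}^2_{L^2}+\norm{u_2^n}^2_{L^2}\bigr)\ds,
\end{equation*}
where I used Cauchy–Schwarz and Young; the remaining time-integral is already controlled by \eqref{Gal:est1}. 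Absorbing the sup term on the left and applying Gronwall delivers \eqref{Gal:est3}. The main obstacle throughout is the cross-diffusion $\Div(\chi(u_1)\Grad u_2)$: before the Stampacchia argument of Section~\ref{Sec8} is available, $u_1^n$ enjoys no pointwise bound, so one cannot absorb the cross term into a single $\norm{\Grad u_1^n}^2$ coming from $u_1$'s own equation; the weighted combination above, exploiting only the $L^\infty$ truncation built into $\chi$ by \eqref{S3}, is what makes the estimate $n$-uniform without any smallness condition relating $u_m$ to $d_1 d_2$.
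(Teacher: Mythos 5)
Your proof is correct and follows essentially the same route as the paper: It\^{o}'s formula for the squared $L^2$ norms, Young's inequality on the prey-taxis term using the $L^\infty$ bound on $\chi$ from \eqref{S3}, absorption of the resulting $\norm{\Grad u_2^n}_{L^2}^2$ contribution via the $u_2$-equation (which has no cross-diffusion), expectation plus Gronwall for \eqref{Gal:est1}--\eqref{Gal:est2}, and BDG with Cauchy--Schwarz and Young for \eqref{Gal:est3}. Your explicit weighted energy $E_n=\norm{u_1^n}_{L^2}^2+\lambda\norm{u_2^n}_{L^2}^2$ is just a cleaner packaging of the paper's substitution of the $u_2$ energy inequality into the $u_1$ estimate (the paper's factor $C(d_1,u_m)/d_2$ plays the role of your $\lambda$), so no genuinely new idea is involved.
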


\begin{proof}
	
	According to It\^{o}'s formula,
	$dS(u_i^n)\!=\!S'(u_i^n)\, du_i^n + \frac12 S''(u_i^n)\sum_{k=1}^n
	\left(\sigma_{u_i,k}(u_i^n)\right)^2\dt$, $i=1,2$,
	for any $C^2$ function $S:\R\to \R$.
	With $S(u_i)=\frac12 \abs{u_i}^2$ for $i=1,2$, we get
	\begin{equation}\label{eq:v+w-1}
		\begin{split}
			&\frac12 \sum_{i=1,2}\norm{u_i^n(t)}_{L^2(\Om)}^2
			+\sum_{i=1,2} d_i \int_0^t
			\int_{\Om} \abs{\Grad u_i^n}^2\dx\ds
			\\ & \quad =
			\frac12 \sum_{i=1,2}\norm{u_i^n(0)}_{L^2(\Om)}^2
			+\sum_{i=1,2}\int_0^t\Bigl(F_i(u_1^n,u_2^n),u_1^n\Bigr)_{L^2(\Om)}\ds
			\\ & \qquad
			+\sum_{i=1,2}\sum^n_{k=1}\int_0^t \int_{\Om}
			u_i^n \sigma_{u_i,k}^n(u_1^n,u_2^n)\dx \dW_{u_i,k}
			+\frac12
			\sum_{i=1,2} \sum^n_{k=1}
			\int_0^t \int_{\Om}
			\left(\sigma_{u_i,k}^n(u_1^n,u_2^n)\right)^2 \dx\ds
			\\ & \qquad
			+\int_0^t\Bigl(\chi(u_1^n) \nabla u_2^n,\Grad u_1^n\Bigr)_{L^2(\Om)}\ds
			\\ & \quad
			\leq \frac12 \sum_{i=1,2}\norm{u_i^n(0)}_{L^2(\Om)}^2
			+C \int_0^t \left(1+\norm{u_1^n(t)}_{L^2(\Om)}^2
			+\norm{u_2^n(t)}_{L^2(\Om)}^2\right)\ds
			\\ & \qquad
			+\sum_{i=1,2} \sum^n_{k=1}\int_0^t \int_{\Om}
			u_i^n\sigma_{u_i,k}^n(u_1^n,u_2^n)\dx\dW_{u_i,k} (s)
                            +\frac12
			\sum_{i=1,2} \sum^n_{k=1}
			\int_0^t \int_{\Om}
			\left(\sigma_{u_i,k}^n(u_1^n,u_2^n)\right)^2 \dx\ds
			\\
			& \qquad
			+\frac{d_1}{2}\int_0^t \int_{\Om} \abs{\Grad u_1^n}^2\dx\ds
			+C(d_1,u_m)\int_0^t \int_{\Om} \abs{\Grad u_2^n}^2\dx\ds\\
			&\quad \leq \frac12 \sum_{i=1,2}\norm{u_i^n(0)}_{L^2(\Om)}^2
			+C \int_0^t \left(1+\norm{u_1^n(t)}_{L^2(\Om)}^2
			+\norm{u_2^n(t)}_{L^2(\Om)}^2\right)\ds
			\\ & \qquad
			+\sum_{i=1,2} \sum^n_{k=1}\int_0^t \int_{\Om}
			u_i^n\sigma_{u_i,k}^n(u_1^n,u_2^n)\dx\dW_{u_i,k} (s)
   			+\frac12 \sum_{i=1,2} \sum^n_{k=1} \int_0^t \int_{\Om}
			\left(\sigma_{u_i,k}^n(u_1^n,u_2^n)\right)^2 \dx\ds
			\\
			& \qquad +\frac{d_1}{2}\int_0^t \int_{\Om} \abs{\Grad u_1^n}^2\dx\ds
			+\frac{C(d_1,u_m)}{d_2}
			\Biggl(\frac12\norm{u_2^n(0)}_{L^2(\Om)}^2
			+C \int_0^t \left(1+\norm{u_1^n(t)}_{L^2(\Om)}^2
			+\norm{u_2^n(t)}_{L^2(\Om)}^2\right)\ds
			\\& \qquad \qquad  \qquad  \qquad  \qquad
			+\sum^n_{k=1}\int_0^t \int_{\Om}
			u_2^n\sigma_{u_2,k}^n(u_1^n,u_2^n)\dx\dW_{u_2,k} (s)
                           +\frac12 \sum^n_{k=1} \int_0^t \int_{\Om}
			\left(\sigma_{u_2,k}^n(u_1^n,u_2^n)\right)^2 \dx\ds\Biggl),
		\end{split}
	\end{equation}
	where we have used the global Lipschitz of the reaction functions in \eqref{reaction} and Young inequality.
	Using \eqref{eq:noise-cond2}, \eqref{eq:v+w-1} implies
	\begin{equation}\label{eq:v+w-new}
		\begin{split}
			&\sum_{i=1,2} \norm{u_i^n(t)}_{L^2(\Om)}^2
			+ \sum_{i=1,2}
			\frac{d_1}{2}\int_0^t \int_{\Om}\abs{\Grad u_1^n}^2\dx\ds
			+d_2\int_0^t \int_{\Om}\abs{\Grad u_2^n}^2\dx\ds
			\\ & \quad
			\leq \sum_{i=1,2}\norm{u_i^n(0)}_{L^2(\Om)}^2		+C\int_0^t \left(1+
			\sum_{i=1,2}\norm{u_i^n(t)}_{L^2(\Om)}^2\right)\ds
			+C \sum_{i=1,2} \sum^n_{k=1}\int_0^t \int_{\Om}
			u_i^n\sigma_{u_i,k}^n(u_1^n,u_2^n)\dx\dW_{u_i,k} (s).
		\end{split}
	\end{equation}

	Now we apply $\E[\cdot]$ to \eqref{eq:v+w-new}, we
         exploit that the initial data
	$u_{1,0},u_{2,0}$ belong to $L^2$ a.s.,
	$$
	\displaystyle \E\Biggl[ \sum^n_{k=1}\int_0^t \int_{\Om}
	u_i^n\sigma_{u_i,k}^n(u_1^n,u_2^n)\dx\dW_{u_i,k} (s)\Biggl]=0,
	$$
	for $i=1,2$,
	and we use the Gronwall inequality, to arrive
	at \eqref{Gal:est1} and \eqref{Gal:est2}. \\
	
	\noindent To prove estimate \eqref{Gal:est3}, we
	take $\sup_{t\in [0,T]}$ and then $\E[\cdot]$
	in \eqref{eq:v+w-1} and \eqref{eq:v+w-new}. Using \eqref{Gal:est1}
	and the $L^2$ boundedness
	of the initial data, we end up
	with the estimate
	\begin{equation}\label{Esup1}
		\sum_{i=1,2}\E\left[ \sup_{t\in [0,T]}
		\norm{u_i^n(t)}_{L^2(\Om)}^2\right]
		\le C\left(1 + \sum_{i=1,2} I_{u_i}\right),
	\end{equation}
	where
	$\displaystyle I_{u_i}:=\E\left[\, \sup_{t\in [0,T]}
	\abs{\sum_{k=1}^n\int_0^t\int_{\Om}
		u_i^n \sigma_{u_i,k}^n(u_1^n,u_2^n) \dx \dW_{u_i,k} (s)}
	\, \right]$.
	Using the BDG inequality \eqref{eq:bdg},
	the Cauchy-Schwarz inequality, \eqref{eq:noise-cond}, Cauchy's
	inequality, and \eqref{Gal:est1}, we proceed as
	follows for $i=1,2$:
	\begin{equation}\label{Esup2}
		\begin{split}
			\abs{I_{u_i}} & \le
			C \E \left[ \left(\int_0^T \sum_{k=1}^n
			\abs{\int_{\Om} u_i^n \sigma^n_{u_i,k}(u_1^n,u_2^n) \dx}^2
			\dt \right)^{\frac12}\right]
			\\ &  \le
			C \E \left[ \left(\int_0^T
			\left(\int_{\Om} \abs{u_i^n}^2\dx\right)
			\left(\sum_{k=1}^n\int_{\Om}
			\abs{\sigma^n_{u_i,k}(u_1^n,u_2^n)}^2
			\dx \right)\dt\right)^{\frac12} \right]
			\\ &
			\le C \E \left[ \left(\sup_{t\in [0,T]}\int_{\Om}
			\abs{u_i^n}^2\dx\right)^{\frac12}
			\left( \int_0^T\sum_{k=1}^n\int_{\Om}
			\abs{\sigma^n_{u_i,k}(u_1^n,u_2^n)}^2
			\dx\dt \right)^{\frac12}\right]
			\\ &
			\le \frac12 \E \left[\sup_{t\in [0,T]}
			\int_{\Om} \abs{u_i^n}^2\dx\right]
			+C\E\left[ \int_0^T\sum_{k=1}^n
			\int_{\Om} \abs{\sigma^n_{u_i,k}(u_1^n,u_2^n)}^2
			\dx\dt\right]
			\\ &
			\le \frac12 \E\left[ \sup_{t\in [0,T]}
			\norm{u_i^n(t)}_{L^2(\Om)}^2\right]+\tilde{C},
		\end{split}
	\end{equation}
	for some sonstants $C,\tilde{C}>0$. Combining the
	inequalities \eqref{Esup1} and \eqref{Esup2}, we arrive at
	the estimate \eqref{Gal:est3}.
\end{proof}

Now, let consider $u_{1,0},u_{2,0} \in L^{q}\left(D,\cF,P;L^2(\Om))\right)$
with $q\in (2,q_0]$ and $q_0>3$. Using \eqref{eq:v+w-1},
the following estimate holds
for any $(u_i,t)\in D \times [0,T]$:
\begin{equation*}
	\begin{split}
		&\sum_{i=1,2}\sup_{0\le \tau\le t}
		\norm{u_i^n(\tau)}_{L^2(\Om)}^2+\sum_{i=1,2} d_i\int_0^t \norm{\Grad u_i(s)}_{L^2(\Om)}^2 \ds
		\le \sum_{i=1,2}\norm{u_i^n(0)}_{L^2(\Om)}^2
		+C\sum_{i=1,2}\int_0^t \norm{u_i^n(s)}_{L^2(\Om)}^2 \ds
		\\ & \qquad\qquad\qquad\qquad\qquad\qquad
		+C\sum_{i=1,2}\sup_{0\le \tau\le t}
		\abs{\sum_{k=1}^n \int_0^{\tau}\int_{\Om}
			u_i^n \sigma_{u_i,k}^n(u_1^n,u_2^n) \dx \dW_{u_i,k} (s)},
	\end{split}
\end{equation*}
We raise both sides of this inequality to power
$q/2$ and we take the expectation. Consequently,
\begin{equation}\label{eq:Lq0-estimation}
	\E\left[ \, \sup_{0\le t\le T}
	\norm{u_i^n(t)}_{L^2(\Om)}^{q}\, \right]\le C,
	\quad
	\E\left[
	\norm{\Grad u_i^n}_{L^2((0,T)\times \Om)}^{q}
	\right]\le C,
	\quad i=1,2.
\end{equation}
for some constant $C>0$, independent of $n$.

\section{Temporal translation estimates}
\label{Sec5}
In order to ensure strong
$L^2_{t,x}$ compactness of a sequence of
Faedo-Galerkin solutions, we establish
a temporal translation estimate in the space $\left(H^1\right)^\star$,
which is enough to work out the required $L^2_{t,x}$
compactness (and tightness).

\begin{lem}\label{lem:time-translation}
	Extend the Faedo-Galerkin functions
	$u_1^n(t),u_2^n(t)$, $t\in [0,T]$, which
	satisfy \eqref{S1-Galerkin-new} and
	\eqref{Galerkin-initdata}, by zero outside of $[0,T]$.
	There exists a constant $C=C(T,\Omega)>0$,
	independent of $n$, such that
	\begin{equation}\label{Time-translate}
		\E \left[\,\sup_{\abs{\tau}\in (0,\delta)}
		\norm{u_i^n(t+\tau)-u_i^n(t)}_{\left(H^1(\Om)\right)^\star}
		\, \right] \le C \delta^{1/2},
		\quad \forall t\in [0,T],
	\end{equation}
	for any sufficiently small $\delta>0$, $i=1,2$.
\end{lem}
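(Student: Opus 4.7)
The plan is to use the integrated weak formulation \eqref{eq:approx-eqn-integrated}, test it against an arbitrary $\varphi \in H^1(\Omega)$ with $\|\varphi\|_{H^1(\Omega)} \le 1$, and bound the increment $u_i^n(t+\tau) - u_i^n(t)$ by controlling the four resulting contributions: the diffusion term, the prey-taxis cross-diffusion term (for $i=1$), the reaction term, and the It\^o stochastic integral. Since $\Pi_n$ is self-adjoint on $L^2(\Omega)$ and $\|\Pi_n \varphi\|_{H^1(\Omega)} \lesssim \|\varphi\|_{H^1(\Omega)}$ in the usual way, all duality pairings with $\Pi_n[\cdot]$ reduce to pairings against $\varphi$ itself (up to a harmless projection), so the estimates can be carried out directly on the raw equation terms.

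For the deterministic contributions, I first integrate by parts in the diffusion and taxis terms to obtain bounds of the form
\begin{equation*}
\left|\int_t^{t+\tau}\!\!\int_\Om d_i\nabla u_i^n\cdot\nabla\varphi\,\dx\,\ds\right|
\le \|\nabla \varphi\|_{L^2(\Om)} \int_t^{t+\tau}\|\nabla u_i^n(s)\|_{L^2(\Om)}\ds,
\end{equation*}
and similarly $\bigl|\int_t^{t+\tau}\!\int_\Om \chi(u_1^n)\nabla u_2^n\cdot\nabla\varphi\,\dx\,\ds\bigr|\le \|\chi\|_\infty\|\nabla\varphi\|_{L^2}\int_t^{t+\tau}\|\nabla u_2^n(s)\|_{L^2}\ds$, where the boundedness $\|\chi\|_\infty\le C(u_m)$ follows from \eqref{S3}. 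Applying Cauchy--Schwarz in time produces a factor of $|\tau|^{1/2}$, and the remaining $L^2_t L^2_x$ norms of $\nabla u_i^n$ are controlled in expectation by \eqref{Gal:est2}. The reaction term $F_i$ is handled using the global Lipschitz bound on $F_i$ together with the $L^\infty_t L^2_x$ bound \eqref{Gal:est3}, yielding a contribution of order $|\tau|$, hence a fortiori of order $|\tau|^{1/2}$. Taking $\sup_{|\tau|\in(0,\delta)}$ is harmless since all these integrals are monotone in the interval length, so they are bounded by the same expression evaluated over $[t-\delta,t+\delta]$.

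For the stochastic contribution, the goal is to bound
\begin{equation*}
\E\left[\sup_{|\tau|\in(0,\delta)}\left\|\int_t^{t+\tau}\sigma^n_{u_i}(u_1^n,u_2^n)\,dW^n_{u_i}\right\|_{(H^1)^\star}\right],
\end{equation*}
which, via the continuous embedding $L^2(\Om)\hookrightarrow (H^1(\Om))^\star$, is dominated by the $L^2(\Om)$-norm. Here I use the Burkholder--Davis--Gundy inequality \eqref{eq:bdg} with $p=1$ applied on the shifted interval, together with the noise growth bound \eqref{eq:noise-cond2} and the a priori estimate \eqref{Gal:est3}, to obtain
\begin{equation*}
\E\left[\sup_{|\tau|\in(0,\delta)}\left\|\int_t^{t+\tau}\sigma^n_{u_i}\,dW^n_{u_i}\right\|_{L^2(\Om)}\right]
\lesssim \E\left[\left(\int_{t-\delta}^{t+\delta}\!\!\norm{\sigma^n_{u_i}}_{L_2(\U,L^2(\Om))}^2\ds\right)^{\!1/2}\right]\lesssim \delta^{1/2}.
\end{equation*}
Combining the four contributions and taking the supremum over $\|\varphi\|_{H^1}\le 1$ yields \eqref{Time-translate}.

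The only nontrivial step is the taxis term, which on the face of it could produce a dependence on $\|u_1^n\|_{L^\infty}$ or $\|\nabla u_2^n\|_{L^4}$; the structural truncation \eqref{S3} that makes $\chi$ globally bounded is what saves us and allows a clean $(H^1)^\star$ estimate using only the $L^2$-gradient bound \eqref{Gal:est2}, which is the reason we only need $(H^1(\Om))^\star$ rather than a weaker dual space.
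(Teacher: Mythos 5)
Your proposal is correct and follows essentially the same route as the paper: both arguments test the integrated Faedo--Galerkin equation against $\phi\in H^1(\Omega)$ with $\norm{\phi}_{H^1(\Om)}\le 1$, split the increment into the diffusion, prey-taxis, reaction, and stochastic contributions, extract the factor $\tau^{1/2}$ from the deterministic terms by the Cauchy--Schwarz inequality in time together with the boundedness of $\chi$ from \eqref{S3} and the a priori bounds \eqref{Gal:est2}--\eqref{Gal:est3}, and handle the stochastic integral with the Burkholder--Davis--Gundy inequality \eqref{eq:bdg} and the noise growth condition. The only (immaterial) cosmetic difference is that you observe the reaction term is actually of order $\tau$ rather than $\tau^{1/2}$, whereas the paper estimates it by $\tau^{1/2}$ directly.
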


\begin{proof}
	The aim is to estimate the expected value of
	\begin{align*}
		I(t,\tau) &:=\norm{u_1^n(t+\tau,\cdot)
			-u_1^n(t,\cdot)}_{\left(H^1(\Om)\right)^\star}
		\\ & =\sup\Set{\abs{\bigl\langle
				u_1^n(t+\tau,\cdot)-u_1^n(t,\cdot),\phi \bigr\rangle}
			\,:\, \phi\in H^1(\Om),\,
			\norm{\phi}_{H^1(\Om)}\le 1}
		\\ & =\sup\Set{\int_{\Omega}
			\bigl(u_1^n(t+\tau,x)-u_1^n(t,x)\bigr)\phi(x)\, dx
			\,:\, \phi\in H^1(\Om),\,
			\norm{\phi}_{H^1(\Om)}\le 1},
	\end{align*}
	for $\tau\in (0,\delta)$, $\delta>0$.
	Note that the same estimate can be obtained for $\tau\in (-\delta,0)$.
	Using Faedo-Galerkin approximations \eqref{eq:approxsol}, we get the following estimation
	$$
	I(t,\tau) :=\norm{u_1^n(t+\tau,\cdot)
		-u_1^n(t,\cdot)}_{\left(H^1(\Om)\right)^\star}
	\le \sum_{i=1}^4 I_i(t,\tau),
	$$
	where
	\begin{align*}
		& I_1(t,\tau)= \norm{\int_t^{t+\tau}
			\Pi_n \left[d_1\,
			\Delta u_1^n \right]
			\ds}_{\left(H^1(\Om)\right)^\star},
		\\ &
		I_2(t,\tau)= \norm{\int_t^{t+\tau}
			\Pi_n \left[\Div \Bigl (\chi(u_1^n)\nabla u_2^n\Bigr)\right]
			\ds}_{\left(H^1(\Om)\right)^\star},
		\\ &
		I_3(t,\tau)= \norm{ \int_t^{t+\tau}
			\Pi_n\left[[F_1(u_1^n,u_2^n)\right]
			\ds}_{\left(H^1(\Om)\right)^\star},
		\\ &
		I_4(t,\tau)=\norm{\sum_{k=1}^n
			\int_t^{t+\tau}\sigma^n_{u_1,k}(u_1^{n},u_2^{n})
			\dW_{u_1,k}(s)}_{\left(H^1(\Om)\right)^\star}.
	\end{align*}
	By the H\"older inequality (recall the definition of $\chi$ in \eqref{S3}),
	\begin{align*}
		\abs{\int_t^{t+\tau}\int_{\Om}
			\chi(u_1^n)\nabla u_2^n
			\cdot \nabla \Pi_n\phi\dx\ds} & \le C\,\tau^{1/2}
		\norm{\nabla u_2^n}_{L^2((0,T)\times\Om)}
		\norm{\nabla \Pi_n\phi}_{L^2(\Om)},
	\end{align*}
	for some constant $C>0$. This implies after taking the expectation and using  basic energy-type estimate \eqref{Gal:est2},
	$$
	\E \left[\abs{\int_t^{t+\tau}\int_{\Om}
		\chi(u_1^n)\nabla u_2^n
		\cdot \nabla \Pi_n\phi\dx\ds}\right] \lesssim_{T,\Om}
	\tau^{1/2}\norm{\phi}_{H^1(\Om)}.
	$$
	Consequently,
	$$
	\E \left[\sup_{0\le \tau\le \delta}
	I_2(t,\tau)\right]
	\lesssim \delta^{1/2},
	\quad \text{uniformly in $t\in [0,T]$}.
	$$
	Working exactly as $I_2$, we get
	$$
	\E \left[\sup_{0\le \tau\le \delta}
	I_1(t,\tau)\right]
	\lesssim \delta^{1/2},
	\quad \text{uniformly in $t\in [0,T]$}.
	$$
	Regarding the function $\pi$ in the defintion of $F_1$, it follows the following bound
	\begin{align*}
		\abs{\int_t^{t+\tau}\int_{\Om}
			F_1(u_1^n,u_2^n) \Pi_n\phi\,dx\,ds}& \lesssim \tau^{1/2}
		\norm{u_1^n+u_2^n}_{L^2((0,T)\times\Om)}
		\norm{\Pi_n \phi}_{L^2(\Om)}
		\\ & \
		\lesssim \tau^{1/2}\left(
		\norm{u_1^n}_{L^2((0,T)\times\Om)}^2
		+\norm{u_2^n}_{L^2((0,T)\times\Om)}^2\right)
		\norm{\phi}_{H^1(\Om)},
	\end{align*}
	where we have used Young's inequality and
	that the sequence $\Set{e_\ell}_{\ell=1}^\infty$
	is an orthonormal basis of $L^2(\Om)$, so that
	$\norm{\Pi_n \phi}_{L^2(\Om)}
	\le \norm{\phi}_{L^2(\Om)}
	\le \norm{\phi}_{H^1(\Om)}$. Hence
	$$
	\E \left[\sup_{\tau\in (0,\delta)}
	I_3(t,\tau)\right]
	\lesssim \delta^{1/2},
	\quad \text{uniformly in $t\in [0,T]$}.
	$$
	For the stochastic term $I_4$, we use the Burkholder-Davis-Gundy inequality \eqref{eq:bdg} to deduce
	\begin{align*}
		&\E \left[\sup_{\tau\in (0,\delta)}
		\norm{\sum_{k=1}^n \int_t^{t+\tau}
			\sigma^n_{u_1,k}(u_1^{n},u_2^n) \dW_{u_1,k}(s)}_{L^2(\Om)}\right]
		\\ & \qquad \lesssim
		\E \left[\sum_{k=1}^n \int_t^{t+\delta}
		\int_{\Om}\left(\sigma^n_{u_1,k}(u_1^n,u_2^n)\right)^2
		\dx\ds\right]^{\frac12}
		\\ & \qquad
		\overset{\eqref{eq:noise-cond}}{\lesssim_\Om}
		\delta^{1/2}\left(1+\E \left[
		\norm{u_1^n}_{L^\infty(0,T;L^2(\Om))}+\norm{u_2^n}_{L^\infty(0,T;L^2(\Om))}\right]\right),
	\end{align*}
	where $\E \left[\norm{u_1^n}_{L^\infty(0,T;L^2(\Om))}+\norm{u_2^n}_{L^\infty(0,T;L^2(\Om))}\right]
	\, \overset{\eqref{Gal:est3}}{\lesssim} 1$.
	As a result,
	$$
	\E \left[\sup_{\tau\in (0,\delta)}
	I_4(t,\tau)\right]
	\lesssim \delta^{1/2},
	\quad \text{uniformly in $t\in [0,T]$}.
	$$
	This concludes the proof of \eqref{Time-translate}
	for $u_1^n$. The proof for $u_2^n$ is the same.
\end{proof}

\section{Tightness and Skorokhod almost sure representations }\label{Sec6}

 Our aim in this section is to establish the tightness of
the probability measures generated
by the Faedo-Galerkin solutions
$\Set{\left(u_1^n,u_2^n,W_{u_1}^n,W_{u_2}^n,u_{1,0}^n,u_{2,0}^n\right)}_{n\ge 1}$.
We mention that the strong convergence of $u_1^n,u_2^n$ in $L_{t,x}^2$
is a consequence of the spatial $\tH$ bound \eqref{Gal:est2}
and the time translation estimate \eqref{Time-translate},
recalling that $H^1(\Om) \subset L^2(\Om) \subset
\left(H^1(\Om)\right)^\star$. We ensure
the strong (almost sure) convergence in the probability
variable $u_i\in D$ for $i=1,2$ by using some results of
Skorokhod linked to tightness (weak compactness)
of probability measures and almost sure
representations of random variables \cite{Sko-representation}.

We consider the following phase space for the probability
laws of the Faedo-Galerkin approximations:
$$
\cH:= \cH_{u_1}\times \cH_{u_2}\times \cH_{W_{u_1}}
\times \cH_{W_{u_2}}\times \cH_{u_{1,0}}\times \cH_{u_{2,0}},
$$
where
$$
\cH_{u_1},\,\cH_{u_2}=L^2(0,T;L^2(\Om))
\bigcap C\bigl(0,T;(H^1(\Om))^\star \bigr)
$$
and
$$
\cH_{W_{u_1}}, \, \cH_{W_{u_2}} = C([0,T];\U_0),
\quad
\cH_{u_{1,0}}=\cH_{u_{2,0}}= L^2(\Om).
$$
where $\U_0$ is
defined in Section \ref{sec:stoch}. We know that $\mathcal{X}_1=L^2(0,T;L^2(\Om))$,
$\mathcal{X}_2=C\bigl(0,T;(H^1(\Om))^\star \bigr)$ are
Polish spaces, therefore the intersection space $\mathcal{X}_1\cap
\mathcal{X}_2$ is Polish. Moreover, it is known products of Polish
spaces are Polish. Furthermore, since $C([0,T];\U_0)$ and
$L^2(\Om)$ are Polish, consequently $\cH$ is a Polish space. Next, we denote
 $\cB(\cH)$ the $\sigma$-algebra
of Borel subsets of $\cH$, and introduce
the measurable mapping
\begin{align*}
	&\Psi_n:\left(D,\cF,P\right)\to
	\left(\cH,\cB(\cH)\right),
	\\ & \Psi_n(\omega)
	=\bigl(u_1^n(\omega),u_2^n(\omega),W_{u_1}^n(\omega),
	W_{u_2}^n(\omega), u_{1,0}^n(\omega),u_{2,0}^n(\omega)\bigr).
\end{align*}
Now we define a probability measure $\cL_n$
on $\left(\cH,\cB(\cH)\right)$ by
\begin{equation}\label{eq:prob-measure-n}
	\cL_n(\cA)= \left(P\circ \Psi^{-1}\right)(A)
	=P\left(\Psi_n^{-1}(\cA)\right),
	\quad \cA\in \cB(\cH).
\end{equation}
Denote by $\cL_{u_1^n}$, $\cL_{u_2^n}$,
$\cL_{W_{u_1}^n}$, $\cL_{W_{u_2}^n}$,
$\cL_{u_{1,0}^n}$, $\cL_{u_{2,0}^n}$
the respective laws of $u_1^n$, $u_2^n$, $W_{u_1}^n$,
$W_{u_2}^n$, $u_{1,0}^n$ and $u_{2,0}^n$, which are defined respectively
on $\left(\cH_{u_1},\cB(\cH_{u_1})\right)$,
$\left(\cH_{u_2},\cB(\cH_{u_2})\right)$,
$\left(\cH_{W_{u_1}},\cB(\cH_{W_{u_1}})\right)$,
$\left(\cH_{W_{u_2}},\cB(\cH_{W_{u_2}})\right)$
$\left(\cH_{u_{1,0}},\cB(\cH_{u_{1,0}})\right)$ and
$\left(\cH_{u_{2,0}},\cB(\cH_{u_{2,0}})\right)$. Therefore
$$
\cL_n = \cL_{u_1^n}\times \cL_{u_2^n}
\times  \cL_{W_{u_1}^n}\times  \cL_{W_{u_2}^n}
\times  \cL_{u_{1,0}^n}\times \cL_{u_{2,0}^n}.
$$

We give sequences $\Set{r_m}_{m\ge1}, \Set{\nu_m}_{m\ge1}$
of positive numbers tending to zero as $m\to \infty$ and we introduce the following Banach space
\begin{align*}
	\cZ_{r_m,\nu_m}
	:=\Biggl\{ z \in & L^\infty\left(0,T;L^2(\Om)\right)
	\cap L^2\left(0,T;\tH(\Om)\right)
	\, : \,
	\\ & \sup_{m\ge 1} \frac{1}{\nu_m}
	\sup_{\tau \in (0,r_m)}
	\norm{z(\cdot+\tau)-z}_{L^\infty\left(0,T-\tau;
		(H^1(\Om))^\star\right)}
	<\infty \Biggr\},
\end{align*}
under the norm
\begin{align*}
	\norm{z}_{\cZ_{r_m,\nu_m}}
	:= &  \norm{z}_{L^\infty\left(0,T;L^2(\Om)\right)}
	+\norm{z}_{L^2\left(0,T;\tH(\Om)\right)}
	\\ & \qquad
	+ \sup_{m\ge 1} \frac{1}{\nu_m}\sup_{0\le \tau\le r_m}
	\norm{z(\cdot+\tau)-z}_{L^\infty\left(0,T-\tau;
		\left(H^1(\Om)\right)^\star\right)}.
\end{align*}

According to \cite{Simon:1987vn}, We have the following compact embedding (consult \cite[Theorem 5]{Simon:1987vn})
$$
\cZ_{r_m,\nu_m}\subset\subset
L^2(0,T;L^2(\Om))\cap
C\bigl([0,T];(H^1(\Om))^\star\bigr).
$$

\noindent We have the following regarding the tightnees of the laws $\cL_n$, cf.~\eqref{eq:prob-measure-n}.

\begin{lem}\label{lem:tight}
	The sequence $\Set{\cL_n}_{n\ge1}$ of
	probability measures is (uniformly) tight, and
	therefore weakly compact,
	on the phase space $\left(\cH,\cB(\cH)\right)$.
\end{lem}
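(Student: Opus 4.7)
The plan is to prove tightness marginal by marginal, assembling the result via the fact that tightness is preserved under finite products on Polish spaces. The marginals $\cL_{W_{u_i}^n}$ are Radon probability measures on the Polish space $C([0,T];\U_0)$ (essentially independent of $n$, being the law of the driving cylindrical Brownian motion lifted to $\U_0$), hence tight by Ulam's theorem. The marginal laws $\cL_{u_{i,0}^n}$ of $u_{i,0}^n=\Pi_n u_{i,0}$ converge weakly to $\mu_{u_{i,0}}$ in $\cP(L^2(\Om))$ because $\Pi_n u_{i,0}\to u_{i,0}$ in $L^2(\Om)$ $P$-a.s.\ by the orthonormal basis property, and any weakly convergent sequence of Borel probabilities on a Polish space is automatically tight.

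\textbf{Tightness of $\cL_{u_i^n}$ for $i=1,2$} will be the core step. First, I would fix sequences $r_m\downarrow 0$ and $\nu_m\downarrow 0$ with $\sum_m r_m^{1/2}/\nu_m<\infty$, for instance $r_m=2^{-2m}$, $\nu_m=2^{-m/2}$. For $R>0$ the ball $B_R:=\{z\in\cZ_{r_m,\nu_m}:\norm{z}_{\cZ_{r_m,\nu_m}}\le R\}$ is bounded in $\cZ_{r_m,\nu_m}$, hence relatively compact in $\cH_{u_i}=L^2(0,T;L^2(\Om))\cap C([0,T];(H^1(\Om))^\star)$ by the compact embedding from \cite{Simon:1987vn} recorded in the excerpt. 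Chebyshev's inequality then reduces matters to bounding $\E[\norm{u_i^n}_{\cZ_{r_m,\nu_m}}]$ uniformly in $n$. The two terms $\norm{u_i^n}_{L^\infty(0,T;L^2(\Om))}$ and $\norm{u_i^n}_{L^2(0,T;\tH(\Om))}$ have uniform expectation bounds by Lemma \ref{lem:apriori-est}, estimates \eqref{Gal:est2}-\eqref{Gal:est3}. For the translation term I use the pointwise inequality $\sup_m a_m\le\sum_m a_m$ together with Lemma \ref{lem:time-translation} to get
\[
\E\Bigl[\sup_{m\ge 1}\frac{1}{\nu_m}\sup_{0<\tau\le r_m}\norm{u_i^n(\cdot+\tau)-u_i^n}_{L^\infty(0,T-\tau;(H^1(\Om))^\star)}\Bigr]\le C\sum_{m\ge 1}\frac{r_m^{1/2}}{\nu_m}<\infty,
\]
uniformly in $n$. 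Consequently $\sup_n\E[\norm{u_i^n}_{\cZ_{r_m,\nu_m}}]<\infty$, and for every $\eps>0$ one can choose $R_\eps$ large enough that $P(u_i^n\in B_{R_\eps})\ge 1-\eps$ for all $n$, which is exactly tightness of $\cL_{u_i^n}$.

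The main obstacle will be the transition from the pointwise-in-$t$ form of Lemma \ref{lem:time-translation} to the $L^\infty_t$ form appearing in the norm on $\cZ_{r_m,\nu_m}$. Inspection of the proof of Lemma \ref{lem:time-translation} should show that each of the four terms $I_1,\ldots,I_4$ is dominated by $\tau^{1/2}$ times a quantity involving only global-in-$t$ norms of $u_i^n$ (controlled uniformly in $n$ by Lemma \ref{lem:apriori-est}), with the stochastic term $I_4$ handled by the BDG inequality \eqref{eq:bdg}; none of these factors depends on $t$, so the estimate promotes to the $L^\infty_t$ version with the same constant. Once this is established, the Chebyshev/compact-embedding argument closes the tightness of $\cL_{u_i^n}$, and tightness of $\cL_n$ on $\cH$ follows immediately from the tightness of all six marginals on their respective Polish phase spaces. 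The weak compactness assertion is then a direct consequence of Prokhorov's theorem.
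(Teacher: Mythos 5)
Your proposal follows essentially the same route as the paper: marginal-by-marginal tightness, with the core step for $\cL_{u_i^n}$ carried out via the compact embedding of $\cZ_{r_m,\nu_m}$ into $L^2(0,T;L^2(\Om))\cap C([0,T];(H^1(\Om))^\star)$, Chebyshev's inequality against the a priori bounds \eqref{Gal:est2}--\eqref{Gal:est3} and the translation estimate \eqref{Time-translate} with a summability condition on $r_m^{\alpha}/\nu_m$, and Prokhorov's theorem for the Wiener and initial-data marginals and for the final weak-compactness claim. The only substantive divergence is that you explicitly flag the passage from the pointwise-in-$t$ statement of Lemma \ref{lem:time-translation} to the $L^\infty_t$ form required by the $\cZ_{r_m,\nu_m}$-norm (a step the paper applies silently); your justification is fine for the deterministic terms $I_1$--$I_3$, but for the stochastic term $I_4$ the claim that the constant's $t$-independence suffices is too quick, since $\E[\sup_t\sup_\tau I_4(t,\tau)]$ requires a uniform modulus-of-continuity estimate for the stochastic integral (e.g.\ via a Kolmogorov--Garsia-type argument) rather than the fixed-$t$ BDG bound --- a gap shared with, not introduced relative to, the paper's own proof.
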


\begin{proof}
	In our proof, we produce compact sets (for each $\delta>0$)
	\begin{align*}
		& \C_{1,\delta}\subset L^2(0,T;L^2(\Om))
		\bigcap C\bigl(0,T;(H^1(\Om))^\star \bigr),
		\\ &
		\text{and} \quad \C_{2,\delta} \subset C([0,T];\U_0),
		\quad
		\C_{3,\delta} \subset L^2(\Om),
	\end{align*}
	such that $\cL_n\left(\C_{\delta}\right)
	=P\left(\Set{\Phi_n \in \C_{\delta}} \right)> 1-\delta$,
	where $\C_{\delta}:=\left(\C_{1,\delta}\right)^2
	\times \left(\C_{2,\delta}\right)^2
	\times \left(\C_{3,\delta}\right)^2$.
	We show that\\
	$\cL_n\left(\C_{i,\delta}^c\right)\le \delta/6$
	for $i=1,2,3$.
	For this, we take the sequences
	$\Set{r_m}_{m=1}^\infty$, $\Set{\nu_m}_{m=1}^\infty$
	such that
	\begin{equation}\label{eq:r-nu}
		\sum_{m=1}^\infty
		\frac{r_m^{1/4}}{\nu_m}<\infty,
	\end{equation}
	and
	$$
	\C_{1,\delta}:=
	\Set{z\in \cZ_{r_m,\nu_m}:
		\norm{z}_{\cZ_{r_m,\nu_m}}\le R_{1,\delta}},
	$$
	where $R_{1,\delta}>0$ is a number
	to be determined later.\\
\noindent Now, we use \cite[Theorem 5]{Simon:1987vn} to deduce that $\C_{1,\delta}$ is a
	compact subset of $L^2(0,T;L^2(\Om))$.
	For $i=1,2$, we have
	\begin{align*}
		& P\left(\Set{u_i\in D:
			u_i^n(u_i)\notin\C^{1,\delta}} \right)
		\\ & \quad \le P\left(\Set{u_i\in D:
			\norm{u_i^n(u_i)}_{L^\infty\left(0,T;L^2(\Om)\right)}
			>R_{1,\delta}}\right)
		\\ & \quad\qquad
		+P\left(\Set{u_i\in D:
			\norm{u_i^n(u_i)}_{L^2\left(0,T;\tH(\Om)\right)}
			>R_{1,\delta}}\right)
		\\ & \quad\qquad +P\left(\Set{u_i\in D:
			\sup_{\tau\in (0,r_m)} \norm{u_i^n(\cdot+\tau)
				-u_i^n}_{L^\infty\left(0,T-\tau;
				\left(H^1(\Om)\right)^\star\right)}
			>R_{1,\delta} \, \nu_m}\right)
		\\ & \quad
		=: P_{1,1}+P_{1,2}+P_{1,3}
		\quad \text{(for any $m\ge 1$)}.
	\end{align*}
	An application of the Chebyshev inequality, we deduce
	\begin{align*}
		P_{1,1} & \le \frac{1}{R_{1,\delta}}
		\E \left[\norm{u_i^n(u_i)}_{L^\infty\left(0,T;
			L^2(\Om)\right)}\right]\le \frac{C}{R_{1,\delta}},
		\\ P_{1,2} & \le \frac{1}{R_{1,\delta}}
		\E\left[\norm{u_i^n(u_i)}_{L^2\left(0,T;
			\tH(\Om)\right)}\right]
		\le \frac{C}{R_{1,\delta}},
		\\ P_{1,3} & \le \sum_{m=1}^\infty
		\frac{1}{R_{1,\delta} \, \nu_m}
		\E\left[\, \sup_{0\le \tau \le r_m}
		\norm{u_i^n(\cdot+\tau)-u_i^n}_{L^\infty
			\left(0,T-\tau;\left(H^1(\Om)\right)^\star\right)}\right]
		\\ & \le \frac{C}{R_{1,\delta}}
		\sum_{m=1}^\infty \frac{r_m^{1/4}}{\nu_m}
		\overset{\eqref{eq:r-nu}}{\le}
		\frac{C}{R_{1,\delta}}.
	\end{align*}
	Herein, we have used \eqref{Gal:est2}, \eqref{Gal:est3},
	and \eqref{Time-translate}.
	We can choose $R_{1,\delta}$ such that
	$$
	\cL_{u_i^n}\left(\C_{1,\delta}^c\right)
	=P\left(\Set{u_i\in D:
		u_i^n(u_i)\notin\C_{1,\delta}} \right)
	\leq \frac{\delta}{6}, \quad i=1,2.
	$$
	We know
	that the finite series $W_{u_1}^{n},W_{u_2}^{n}$
	are $P$-a.s.~convergent in $C([0,T];\U_0)$ as $n\to \infty$.
	Consequently the laws $\cL_{W_{u_1}^n}, \cL_{W_{u_2}^n} $
	converge weakly. Now, we use Prokhorov's weak
	compactness characterization (see
	e.g.~\cite[Theorem 2.3]{DaPrato:2014aa})) to deduce the tightness of $\Set{\cL_{W^n_{u_1}}}_{n\ge 1}$ and
	$\Set{\cL_{W^n_{u_2}}}_{n\ge 1}$. Therefore, for any
	$\delta>0$, there exists a compact
	set $\C_{2,\delta}$ in $C([0,T];\U_0)$ such that
	$$
	\cL_{W_{u_i}^n}\left(\C_{2,\delta}^c\right)
	=P\left(\Set{u_i\in D: W_{u_i}^n(u_i)
		\notin\C_{2,\delta}} \right)\leq \frac{\delta}{6},
	\quad i=1,2.
	$$
	Moreover, the initial data approximations
	$u_{1,0}^n, u_{2,0}^n$ are $P$-a.s.~convergent in $L^2(\Om)$
	as $n\to \infty$ and the laws $\cL_{u_{1,0}^n}, \cL_{u_{2,0}^n}$
	converge weakly (with $\cL_{u_{1,0}^n}\weak \mu_{u_{1,0}}$,
	$\cL_{u_{2,0}^n}\weak u_{2,0}^n$). This implies that
	these laws are tight and
	$$
	\cL_{w_0^n}\left(\C_{3,\delta}\right)
	=P\left(\Set{u_i\in D: w_0^n(u_i)
		\notin\C_{3,\delta}} \right)
	\leq \frac{\delta}{6}, \quad i=1,2.
	$$
	This implies that $\Set{\cL_n}_{n\ge1}$
	is a tight sequence of probability measures.
	The weak compactness of $\Set{\cL_n}_{n\ge1}$ is the consequence of Prokhorov's theorem \cite[Theorem 2.3]{DaPrato:2014aa}.
\end{proof}

Note that the probability measures $\cL_n$ form a sequence that is weakly compact on $\left(\cH,\cB(\cH)\right)$. As result,  we deduce that $\cL_n$ converges weakly to a probability measure $\cL$ on $\cH$ (up to a subsequence).  Now, we can apply the Skorokhod  theorem
(see e.g.~\cite[Theorem 2.4]{DaPrato:2014aa})
to deduce the existence of a new
probability space $(\tilde{D},\tilde{\cF}, \tilde{P})$
and new random variables
\begin{equation}\label{eq:def-tvar}
	\begin{split}
		\tilde \Psi_n=\left(\tilde u_1^n, \tilde u_2^n,
		\tilde W_{u_1}^n, \tilde W_{u_2}^n,
		\tilde u_{1,0}^n, \tilde u_{2,0}^n\right),
		\quad
		\tilde \Psi=\left (\tilde u_1, \tilde u_2,
		\tilde W_{u_1}, \tilde W_{u_2},\tilde u_{1,0},\tilde u_{2,0}\right),
	\end{split}
\end{equation}
with respective joint laws $\tilde \cL_n=\cL_n$
and $\tilde \cL=\cL$, such that $\tilde \Psi_n \to \tilde \Psi$
almost surely in the topology of $\cX$. Thus, the
following convergences hold $\tilde P$-almost
surely as $n\to \infty$:
\begin{equation}\label{eq:strong-conv0}
	\begin{split}
		& \tilde u_1^n \to \tilde u_1,
		\quad  \tilde u_2^n \to \tilde u_2  \quad
		\text{in $L^2(0,T;L^2(\Om))$},
		\\ & \tilde u_1^n\to \tilde u, \quad
		\tilde u_2^n \to \tilde u_2
		\quad \text{in $C\bigl([0,T];
			\left(H^1(\Om)\right)^\star\bigr)$},
		\\ &
		\tilde W_{u_1}^{n} \to \tilde W_{u_1},
		\quad
		\tilde W_{u_2}^{n} \to \tilde W_{u_2}
		\quad \text{in $C([0,T]; \U_0)$},
		\\ &
		\tilde u_{1,0}^n \to \tilde u_{1,0},
		\quad
		\tilde u_{2,0}^n\to \tilde u_{2,0}
		\quad \text{in $L^2(\Om)$.}
	\end{split}
\end{equation}

Observe that by equality of the laws, the estimates
in Lemma \ref{lem:apriori-est}
and \eqref{eq:Lq0-estimation} continue to hold for
the new random variables $\tilde u_i^n$ ($i=1,2$).
Moreover, all estimates for the
Faedo-Galerkin approximations $u_i^n$ are valid for the
"tilde" approximations $\tilde u_i^n$
defined on the new probability space
$(\tilde{D},\tilde{\cF}, \tilde{P})$.
Additionally, we have for any $q\in [2,q_0]$ (recall that $q_0>3$),
\begin{equation}\label{eq:Lq0-est-tilde}
	\tilde\E\left[
	\norm{\tilde u_i^n}_{L^\infty(0,T;L^2(\Om)}^q
	\, \right]\le C,
	\quad
	\tilde \E\left[
	\norm{\Grad \tilde u_i^n}_{L^2((0,T)\times \Om)}^q
	\right]\le C,
	\quad i=1,2,
\end{equation}
where the constant $C$ is independent of $n$.

Now, we consider the stochastic basis
\begin{equation}\label{eq:stoch-basis-n}
	\tilde \cS_n=\bigl(\tilde D,\tilde \cF,
	\bigl\{\tilde \cF_t^n\bigr\}_{t\in [0,T]},\tilde P,
	\tilde W_{u_1}^{n},\tilde W_{u_2}^{n}\bigr),
\end{equation}
where
$$
\tcFt^n = \sigma\bigl(
\sigma \bigl(\tilde \Psi_n\big |_{[0,t]}\bigr)
\bigcup \bigl\{N \in \tilde \cF: \tilde P (N)=0\bigr\}\bigr).
$$
The filtration $\bigl\{\tcFt^n\bigr\}_{n\ge 1}$
is the smallest such that  the "tilde processes"
$\tilde u_1^n$, $\tilde u_2^n$, $\tilde W_{u_1}^n$, $\tilde W_{u_2}^n$,
$\tilde u_{1,0}^n$, and $\tilde u_{2,0}^n$ are adapted.

In view of equality of the laws and
L\'{e}vy's martingale characterization of a Wiener process,
see \cite[Theorem 4.6]{DaPrato:2014aa}, we conclude that
$\tilde W_{u_1}^{n}$ and $\tilde W_{u_2}^{n}$ are cylindrical Wiener
processes. Moreover, we claim that $\tilde W_{u_1}^n$, $\tilde W_{u_2}^n$
are cylindrical Wiener processes relative to
the filtration $\bigl\{\tcFt^n\bigr\}_{n\ge 1}$
defined in \eqref{eq:stoch-basis-n}.
To prove this, we verify that $\tilde W_{u_i}^n(t)$
is $\tcFt^n$ measurable and $\tilde W_{u_i}^n(t)-\tilde W_{u_i}^n(s)$
is independent of $\tcFs^n$, for all $0\le s< t\le T$,
$i=1,2$.  Since $\tilde W_{u_i}^n$ and $W_{u_i}^n$ have
the same laws and that $W_{u_i}^n(t)$ is $\cFt$
measurable and $W{u_i}^n(t)-W_{u_i}^n(s)$ is
independent of $\cFs$, we obtain the aforesaid properties.

Thus, there exist sequences (recall that $\bigl\{\psi_k\bigr\}_{k\ge 1}$ is the basis
of $\U$ and the series converge in $\U_0 \supset \U$)
$\bigl\{\tilde W_{u_1,k}^{n}\bigr\}_{k\ge 1}$,
$\bigl\{\tilde W_{u_2,k}^{n}\bigr\}_{k\ge 1}$ of
mutually independent real-valued Wiener processes
adapted to $\bigl\{\tcFt^n\bigr\}_{t\in [0,T]}$ such that
\begin{equation}\label{eq:Wiener-tilde-n}
	\tilde W_{u_i}^n=\sum_{k\ge 1}\tilde W_{u_i,k}^n \psi_k,
	\quad \text{for $i=1,2$}.
\end{equation}
Next, we will use the following $n$-truncated sums
$$
\tilde W_{u_i}^{(n)} = \sum_{k=1}^n\tilde W_{u_i,k}^n\psi_k,
\quad i=1,2,
$$
which converges to $\tilde W_{u_i}$
in $C([0,T];\U_0)$, $\tilde P$-almost surely for $i=1,2$.

Using \eqref{eq:approx-eqn-integrated} and equality of the laws, the following equations hold $\tilde P$-almost surely on the new probability space
$\bigl(\tilde{D},\tilde{\cF}, \tilde{P}\bigr)$:
\begin{equation}\label{eq:approx-eqn-tilde}
	\begin{split}
		& \tilde u_1^n(t)-\int_0^t
		\Pi_n \left[d_1\, \Delta \tilde u_1^n \right]\ds
		+\int_0^t \Pi_n \left[\Div \left(\chi(\tilde u_1^n)\nabla \tilde u_2^n\right)\right]\ds
		\\ & \qquad \qquad \qquad
		= \tilde u_{1,0}^n+\int_0^t
		\Pi_n\left[F_1(\tilde u_1^n, \tilde u_2^n)\right]\ds
		+\int_0^t\sigma^n_{u_1}(\tilde u_1^n)
		\,d \tilde W^{(n)}_{u_1}(s)
		\quad \text{in $L^2(\Om)$},
		\\ &
		\tilde u_2^n(t)-\int_0^t
		\Pi_n \left[d_2\, \Delta \tilde u_2^n \right]\ds
		\\ & \qquad \qquad \qquad
		= \tilde u_{2,0}^n+\int_0^t \Pi_n\left[
		F_2(\tilde u_1^n,\tilde u_2^n)\right]\ds
		+\int_0^t \sigma^n_{u_2}(\tilde u_2^n)
		\,d\tilde W^{(n)}_{u_2}(s)
		\quad \text{in $L^2(\Om)$},
	\end{split}
\end{equation}
for any $t\in [0,T]$, where $\sigma^n_{u_i}(\tilde u_i^n)
\, d\tilde W^{(n)}_{u_i}=\sum_{k=1}^n \sigma^n_{u_i,k}(\tilde u_i^n)
\, d\tilde W^n_{u_i,k}$ for $i=1,2$.

\medskip

\section{Passing to the limit in the Faedo-Galerkin equations}\label{Sec7}

We will need a stochastic basis for the limit of
the Skorokhod representations, i.e., for the
variables \\$\tilde \Psi:=\bigl(\tilde u_1, \tilde u_2, \tilde W_{u_1},
\tilde W_{u_2},\tilde u_{0,1}, \tilde u_{0,2}\bigr)$,
cf.~\eqref{eq:def-tvar}: specifically,
\begin{equation}\label{eq:stoch-basis-tilde}
	\tilde \cS=\bigl(\tilde D,\tilde \cF,
	\bigl\{\tilde \cF_t\bigr\}_{t\in [0,T]},\tilde P,
	\tilde W_{u_1},\tilde W_{u_2}\bigr),
\end{equation}
where $\tcFt = \sigma\bigl(\sigma
\bigl(\tilde \Psi\big |_{[0,t]}\bigr)
\bigcup \bigl\{N \in \tilde \cF: \tilde P (N)=0\bigr\}\bigr)$.
We know that $\tilde W_{u_1}^{n}$, $\tilde W_{u_2}^{n}$
are cylindrical Wiener processes with
respect to $\tilde \cS_n$ (see \eqref{eq:stoch-basis-n}
and \eqref{eq:Wiener-tilde-n}) and
$\tilde W_{u_1}^{n}\to \tilde W_{u_1}$,
$\tilde W_{u_2}^{n} \to \tilde W_{u_2}$ in
the sense of \eqref{eq:strong-conv0}.
Consequently, there exist sequences
$\bigl\{\tilde W_{u_1,k}\bigr\}_{k\ge 1}$,
$\bigl\{\tilde W_{u_2,k}\bigr\}_{k\ge 1}$
of real-valued Wiener processes
adapted to the filtration
$\bigl\{\tilde \cFt\bigr\}_{t\in [0,T]}$,
cf.~\eqref{eq:stoch-basis-tilde}, such that
$\tilde W_{u_1}=\sum_{k\ge 1}\tilde W_{u_1,k} \psi_k$
and $\tilde W_{u_2}=\sum_{k\ge 1}\tilde W_{u_2,k} \psi_k$.

Exlpoiting the estimations \eqref{eq:Lq0-est-tilde} and the a.s.
convergences in \eqref{eq:strong-conv0},
we deduce by passing
if necessary to subsequence as $n\to \infty$
\begin{equation}\label{eq:tilde-conv}
	\begin{split}
		\mathrm{i)} & \quad \tilde u_1^n\to \tilde u_1, \quad
		\tilde u_2^n \to \tilde u_2
		\quad
		\text{in $L^2\bigl(\tilde D,\tilde \cF,\tilde P;
			L^2(0,T;L^2(\Om))\bigr)$},
		\\
		\mathrm{ii)} & \quad
		\tilde u_1^n\weak \tilde u_1, \quad
		\tilde u_2^n \weak \tilde u_2
		\quad
		\text{in $L^2\bigl(\tilde D,\tilde \cF,\tilde P;
			L^2(0,T;\tH(\Om))\bigr)$},
		\\
		\mathrm{iii)} & \quad
		\tilde u_1^n \weakstar \tilde u_1, \quad
		\tilde u_2^n \weakstar \tilde u_2
		\quad
		\text{in $L^2\bigl(\tilde D,\tilde \cF,\tilde P;
			L^\infty(0,T;L^2(\Om))\bigr)$},
		\\
		\mathrm{iv)} & \quad
		\tilde u_1^n\to \tilde u_1, \quad
		\tilde u_2^n \to \tilde u_2
		\quad \text{in $L^2\bigl(\tilde D,\tilde \cF,
			\tilde P;C\bigl([0,T];\left(H^1(\Om)\right)^\star
			\bigr)\bigr)$},
		\\
		\mathrm{v)} & \quad
		\tilde W_{u_1}^n \to \tilde W_{u_1},
		\quad
		\tilde W_{u_2}^n \to \tilde W_{u_2}
		\quad
		\text{in $L^2\bigl(\tilde D,\tilde \cF,
			\tilde P;C([0,T];\U_0)\bigr)$},
		\\
		\mathrm{vi)} & \quad
		\tilde u_{1,0}^n \to \tilde u_{1,0},
		\quad
		\tilde u_{2,0}^n \to \tilde u_{2,0}
		\quad
		\text{in $L^2\bigl(\tilde D,\tilde \cF,
			\tilde P;L^2(\Om)\bigr)$.}
	\end{split}
\end{equation}

Finally, we pass to the limit in the
Faedo-Galerkin equations \eqref{eq:approx-eqn-tilde}.

\begin{lem}[limit equations]\label{lem:limit-eqs}
	The limits $\tilde u_1$, $\tilde u_2$, $\tilde W_{u_1}$,
	$\tilde W_{u_2}$, $\tilde u_{1,0}$, $\tilde u_{2,0}$ of
	the Skorokhod a.s.~representations of the Faedo-Galerkin
	approximations---constructed in
	\eqref{eq:def-tvar}, \eqref{eq:strong-conv0}---satisfy
	the following equations $\tilde P$-a.s.,
	for all $t\in [0,T]$:
	\begin{align}
		&\int_{\Om} \tilde u_1(t) \vphi_{u_1} \dx
		-\int_{\Om} \tilde u_{1,0} \, \vphi_{u_1} \dx +\int_0^t \int_{\Om}
		\left (d_1 \Grad \tilde u_1 - \chi(\tilde u_1) \nabla \tilde u_2\right)
		\cdot \Grad \vphi_{u_1}  \dx\ds
		\notag \\
		 & \quad \quad
		= \int_0^t \int_{\Om}F_1(\tilde u_1, \tilde u_2)\vphi_{u_1}  \dx\ds
		+ \int_0^t \int_{\Om} \sigma_{u_1}(\tilde u_1,\tilde u_2)
		\vphi_{u_1} \dx  \, d \tilde W_{u_1} (s),
		\label{eq:weakform-u-tilde}
		\\ &
		\int_{\Om} \tilde u_2(t)  \vphi_{u_2}\dx
		-\int_{\Om} \tilde u_{2,0} \vphi_{u_2}\dx+ \int_0^t \int_{\Om}
		d_2 \Grad \tilde u_2
		\cdot \Grad \vphi_{u_2} \dx\ds
		\notag \\ & \quad \quad
		=\int_0^t \int_{\Om} F_2(\tilde u_1, \tilde u_2) \vphi_{u_2}\dx\ds
		+ \int_0^t \int_{\Om} \sigma_{u_2}(\tilde u_1,\tilde u_2)
		\vphi_{u_2} \dx \, d \tilde W_{u_2}(s),
		\label{eq:weakform-v-tilde}
	\end{align}
	for all $\vphi_{u_1},\vphi_{u_2} \in H^1(\Omega)$,
	where the laws of $\tilde u_{1,0}$
	and $\tilde u_{2,0}$ are $\mu_{u_{1,0}}$
	and $\mu_{u_{2,0}}$, respectively.
\end{lem}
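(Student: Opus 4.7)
The plan is to depart from the Faedo-Galerkin identity \eqref{eq:approx-eqn-tilde}, which holds as an equality in $L^2(\Om)$, test it against $\Pi_n \vphi_{u_i}$ for an arbitrary $\vphi_{u_i} \in H^1(\Om)$ (choosing the basis $\Set{e_\ell}$ as Neumann-Laplacian eigenfunctions, so that $\Pi_n\vphi \to \vphi$ strongly in $H^1(\Om)$ and integration by parts is legitimate), and then pass to the limit $n\to\infty$, $\tilde P$-a.s., using the Skorokhod convergences \eqref{eq:tilde-conv}. Since both sides of \eqref{eq:weakform-u-tilde}-\eqref{eq:weakform-v-tilde} depend continuously on $t$ (the deterministic part through the $C([0,T];(H^1)^\star)$ convergence in \eqref{eq:tilde-conv}, the It\^{o} integral by its martingale path regularity), the $\tilde P$-null exceptional set can be made uniform in $t$ and in the test functions after a density argument on a countable dense subset of $H^1(\Om)$.

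\textbf{Deterministic terms.} The initial-datum piece and the time slice $\int_{\Om} \tilde u_i^n(t)\,\Pi_n\vphi_{u_i}\dx$ are handled by the strong convergence of $\tilde u_{i,0}^n$ in $L^2(\Om)$ and of $\tilde u_i^n$ in $C([0,T];(H^1)^\star)$, combined with $\Pi_n\vphi_{u_i}\to \vphi_{u_i}$ in $H^1(\Om)$. For the diffusive part, the a priori bound on $\Grad \tilde u_i^n$ from Lemma \ref{lem:apriori-est}, valid $\tilde P$-a.s.\ by equality of laws, gives $\Grad \tilde u_i^n\weak \Grad \tilde u_i$ weakly in $L^2((0,T)\times\Om)$ pathwise (the limit identification coming from the strong $L^2_{t,x}$ convergence), which multiplied against the strongly convergent $\Grad\Pi_n\vphi_{u_i}$ passes to the limit. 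For the taxis term, the bound $\chi \in C^1(\R)$ with $\supp \chi\subset[0,u_m]$ in \eqref{S3} combined with dominated convergence upgrades the a.e.\ convergence $\chi(\tilde u_1^n)\to\chi(\tilde u_1)$ (extracted from \eqref{eq:tilde-conv}) to strong convergence in $L^p((0,T)\times \Om)$ for every $p<\infty$; this is then paired against the weakly convergent $\Grad \tilde u_2^n$ and the strongly convergent $\Grad \Pi_n\vphi_{u_1}$. For $F_1,F_2$, continuity at the limit follows from the strong $L^2_{t,x}$ convergence, and the higher moment bound \eqref{eq:Lq0-est-tilde} with $q>3$ provides the uniform integrability needed to invoke Vitali after integration in $\omega$.

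\textbf{Stochastic term.} This is the principal obstacle. Writing the difference
\begin{equation*}
	\int_0^t \sigma_{u_i}^n(\tilde u_1^n,\tilde u_2^n)\,d\tilde W_{u_i}^{(n)}(s)-\int_0^t \sigma_{u_i}(\tilde u_1,\tilde u_2)\,d\tilde W_{u_i}(s)=A_n+B_n,
\end{equation*}
where $A_n$ keeps the driver $\tilde W_{u_i}^{(n)}$ fixed and replaces the integrand by $\sigma_{u_i}^n(\tilde u_1^n,\tilde u_2^n)-\sigma_{u_i}(\tilde u_1,\tilde u_2)$, while $B_n$ keeps the integrand $\sigma_{u_i}(\tilde u_1,\tilde u_2)$ fixed and varies the driver. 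The BDG inequality \eqref{eq:bdg}, the Lipschitz estimate \eqref{eq:noise-cond2}, and the strong $L^2_{t,x}$ convergence of $\tilde u_i^n$ control $A_n$ (the truncation of modes $k\le n$ in $\sigma_{u_i}^n$ is absorbed by Parseval in the Hilbert-Schmidt norm); the term $B_n$ is treated by a standard stability lemma for It\^{o} integrals with respect to varying Wiener drivers (as in \cite{DaPrato:2014aa,Debussche:2011aa}), combining the $C([0,T];\U_0)$ convergence of $\tilde W_{u_i}^{(n)}\to\tilde W_{u_i}$ with the uniform $L^2$-in-$\omega$ bound on $\sigma_{u_i}(\tilde u_1,\tilde u_2)$. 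Convergence in probability then delivers the limit, and its identification as an It\^{o} integral against $\tilde W_{u_i}$ relative to $\Set{\tilde\cF_t}_{t\in[0,T]}$ is routine from adaptedness of the limit processes and L\'{e}vy's martingale characterization already used for the drivers.

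Combining the above, \eqref{eq:weakform-u-tilde}-\eqref{eq:weakform-v-tilde} hold for all $\vphi_{u_1},\vphi_{u_2}\in H^1(\Om)$ and $t\in[0,T]$, $\tilde P$-a.s. The statement on the laws of $\tilde u_{1,0}$, $\tilde u_{2,0}$ follows from equality of laws in the Skorokhod construction together with $\cL_{u_{i,0}^n}\weak \mu_{u_{i,0}}$.
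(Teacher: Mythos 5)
Your proposal is correct and follows essentially the same route as the paper: pass to the limit term by term in the Skorokhod ``tilde'' Galerkin equations, pairing the weak $L^2$ convergence of $\Grad\tilde u_2^n$ with the strong convergence of $\chi(\tilde u_1^n)\Grad\Pi_n\vphi$ for the taxis term, using the Lipschitz bounds \eqref{eq:noise-cond2} plus the projection error for the truncated noise, and invoking the standard convergence lemma for stochastic integrals with varying drivers together with BDG, the $q>2$ moment bound \eqref{eq:Lq0-est-tilde} and Vitali. The only differences are presentational (the paper tests against an indicator $\En_Z(\omega,t)$ and shows $\norm{I_{u_i}}_{L^2(\tilde D\times(0,T))}=0$ rather than arguing pathwise with a countable dense set of test functions, and it proves convergence of the integrand in $L^2(0,T;L_2(\U,L^2))$ a.s.\ rather than splitting the integral difference into your $A_n+B_n$), so no further comparison is needed.
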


\begin{proof}
	First, we fix $\vphi_{u_i}\in H^1(\Om)$, and
	we write \eqref{eq:weakform-u-tilde}-\eqref{eq:weakform-v-tilde}
	symbolically as $I_{u_i}(\omega,t)=0$, for
	$(\omega,t)\in \tilde D\times (0,T)$ and for $i=1,2$. Our goal is to demonstrate that for $i=1,2$
	$$
	\norm{I_{u_i}}_{L^2(\tilde D\times (0,T))}^2
	=\tilde{\E} \int_0^T \left(I_{u_i}(\omega,t)
	\right)^2\dt=0,
	$$
	which implies that $I_{u_i}=0$ for
	$d\tilde P\times dt$-a.e.~$(\omega,t)\in
	\tilde D\times (0,T)$ and thus, by the
	Fubini theorem, $I_{u_i}=0$ $\tilde P$-a.s.,
	for a.e.~$t\in (0,T)$. By density in $L^2$, we prove that for $i=1,2$
	\begin{equation}\label{eq:limit-weak-form-tilde-tmp0}
		\E \left[\,\int_0^T \En_Z(\omega,t)
		I_{u_i}(\omega,t)\right] \,dt=0,
	\end{equation}
	for a measurable set
	$Z \subset \tilde D\times (0,T)$,
	where $\En_Z(\omega,t)\in
	L^\infty \left(\tilde D\times (0,T);
	\tilde dP\times dt \right)$
	denotes the characteristic function of $Z$.\\
\noindent	We multiply \eqref{eq:weakform-u-tilde} with
	$\vphi_{u_1}\in H^1(\Om)$, we intgrate by parts and we use the basic properties
	of the projection operator $\Pi_n$ to obtain
	\begin{equation}\label{eq:weakform-tilde-tmp1}
		\begin{split}
			& \int_{\Om} \tilde u_1^n(t)\vphi_{u_1} \dx
			+ \int_0^t \int_{\Om} d_1
			\Grad \tilde u_1^n
			\cdot \Grad \Pi_n \vphi_{u_1} \dx\ds- \int_0^t \int_{\Om} \chi(\tilde u_1^n)  \Grad \tilde u_2^n\cdot
			\Grad \Pi_n \vphi_{u_1} \dx\ds
			\\ &
			= \int_{\Om} \tilde u_{1,0}^n \vphi_{u_1} \dx
			+ \int_0^t \int_{\Om} F_1(\tilde u_1^n,\tilde u_2^n)
			\Pi_n \vphi_{u_1} \dx\ds
			+\int_0^t \int_{\Om} \sigma_{u_1}^n(\tilde u_1^n,\tilde u_2^n)
			\Pi_n  \vphi_{u_1} \dx
			\, d\tilde W_{u_1}^{(n)}(s).
		\end{split}
	\end{equation}
	Next, we multiply \eqref{eq:weakform-tilde-tmp1}
	with the characteristic function (of $Z$)
	$\En_Z(\omega,t)$, we integrate the result
	over $(\omega,t)$, and then
	we pass to the limit $n\to\infty$ in each term separately.
	
	Now, we use part vi) of \eqref{eq:tilde-conv} to get (recall
	that $u_{1,0}^n=\Pi_n u_{1,0} \to u_{1,0}$ in $L^2(\Om)$
	and $u_{1,0}\sim \mu_{u_{1,0}}$)
	$$
\displaystyle\tilde\E\int_0^T\int_{\Om} \En_Z \tilde u_{1,0}^n
	\vphi_{u_1} \dx\overn\tilde\E\int_0^T\int_{\Om}
	\En_Z \tilde u_{1,0} \vphi_{u_1} \dx.
$$
	Since the laws of $u_{1,0}^n$ and $\tilde{u}_{1,0}^n$
	are the same, we dedude that $\tilde u_{1,0}\sim \mu_{u_{1,0}}$.
	
	
	Note that, the weak convergence in $L^2_{\omega,t,x}$ of
	$\tilde \Grad u_1^n$
	implies that (consult \eqref{eq:tilde-conv}--(ii))
	\begin{align*}
		&\tilde \E \left[\int_0^T \En_Z(\omega,t)
		\left( \int_0^t
		\int_{\Om} d_1 \Grad \tilde u_1^n
		\cdot \Grad \Pi_n \vphi_{u_1} \dx\ds
		\right)\dt \right]
		\\ & \qquad
		\overn \tilde \E \left[\int_0^T \En_Z(\omega,t)
		\left(\int_0^t \int_{\Om}
		d_1
		\Grad \tilde u_1\cdot
		\Grad \vphi_{u_1} \dx\ds\right)\dt \right].
	\end{align*}
	For the prey-taxis term exploit the convergences:
	$\chi (\tilde u_1^n) \Grad \Pi_n \vphi_{u_1}
	\overn \chi (\tilde u_1) \Grad \vphi_{u_1}$ strongly in $L^2_{\omega,t,x}$,
	$\Grad\Pi_n \vphi_{u_1}\to \Grad \vphi_{u_1}$ in $L^2_x$ and
         the strong $L^2_{\omega,t,x}$ convergence
	of $\tilde u_1^n$. The result is
	\begin{align*}
		&\E \left[\int_0^T \En_Z(\omega,t)
		\left(\int_0^t  \int_{\Om}
		\chi(\tilde u_1^n)\nabla \tilde u_2^n
		\cdot \Grad \Pi_n \vphi_{u_1}
		\dx\ds\right) \dt\right]
		\\ & \qquad
		\overn
		\E \left[\int_0^T \En_Z(\omega,t)
		\left(\int_0^t\int_{\Om}
		\chi(\tilde u_1) \nabla \tilde u_2
		\cdot \Grad \vphi_{u_1} \dx\ds\right) \dt\right].
	\end{align*}
	
	Recalling that the function $F_1$ is globally Lipschitz and $\Pi_n\vphi_{u_1}\to \vphi_{u_1}$ in $L^2(\Om)$,
	we deduce from the strong convergences
	$\tilde u_i^n\to \tilde u_i$ in $L^2_{\omega,t,x}$ for $i=1,2$ (consult \eqref{eq:tilde-conv}--(i))
	\begin{align*}
		& \E \left[\int_0^T \En_Z(\omega,t)
		\left (\int_0^t \int_{\Om}
		F_1(\tilde u_1^n,\tilde u_2^n) \Pi_n\vphi_{u_1}
		\dx\ds\right)\dt \right]
		\\ & \qquad
		\overn
		\E \left[\int_0^T \En_Z(\omega,t)
		\left (\, \int_0^t \int_{\Om}
		F_1(\tilde u_1,\tilde u_2) \vphi_{u_1} \dx\ds\right)
		\dt \right].
	\end{align*}	
	Regarding the stochastic integral, we prove first  that
	\begin{equation}\label{eq:stoch-conv-tmp1}
		\int_0^t \sigma_{u_1}^n(\tilde u_1^n,\tilde u_2^n)
		\, d\tilde W_{u_1}^{(n)}(s) \overn
		\int_0^t \sigma_u(\tilde u_1,\tilde u_2)
		\, d\tilde W_{u_1} (s)
		\quad \text{in $L^2\left(0,T;L^2(\Om)\right)$},
	\end{equation}
	in probability (with respect to $\tilde P$).
	Since $\tilde W_{u_1}^{(n)} \to \tilde W_{u_1}$ in $C\big([0,T];\U_0\big)$,
	$\tilde P$-a.s.~and thus in probability,
	cf.~\eqref{eq:strong-conv0}, it remains to prove that
	\begin{equation}\label{eq:etan-conv}
		\sigma_{u_1}^n(\tilde u_1^n,\tilde u_2^n) \to \sigma_{u_1}(\tilde u_1,\tilde u_2)
		\quad \text{in $L^2\bigl(0,T;
			L_2(\U;L^2(\Om))\bigr)$,
			$\tilde P$-almost surely}.
	\end{equation}
	
	Clearly,
	\begin{equation}\label{est:vitali-start1}
		\begin{split}
			&\int_0^T
			\norm{\sigma_{u_1}(\tilde u_1,\tilde u_2)
				-\sigma_{u_1}^n(\tilde u_1^n,\tilde u_2^n)}_{L_2\left(\U;L^2(\Om)\right)}^2
			\dt \\ & \quad
			\leq \int_0^T \norm{\sigma_{u_1}(\tilde u_1,\tilde u_2)
				-\sigma_{u_1}(\tilde u_1^n,\tilde u_2^n)}_{L_2\left(\U;L^2(\Om)\right)}^2 \dt
			\\ & \quad \qquad
			+\int_0^T \norm{\sigma_{u_1}(\tilde u_1,\tilde u_2)
				-\sigma_{u_1}^n(\tilde u_1,\tilde u_2)}_{L_2\left(\U;L^2(\Om)\right)}^2 \dt
			=: I_1+I_2.
		\end{split}
	\end{equation}
	Using \eqref{eq:noise-cond2} and \eqref{eq:strong-conv0},
	we obtain easily
	\begin{equation}\label{eq:noise-conv-I1}
		I_1 \overn 0,
		\quad \text{$\tilde P$-almost surely}.
	\end{equation}
	For  $I_2$, we have (recall the definitions of $\sigma_{u_1,k}$, $\sigma_{u_1,k,\ell}$ defined respectively in \eqref{def:sint}, \eqref{def-sigwkl})
	\begin{align*}
		I_2 &= \int_0^T \sum_{k \geq 1}
		\norm{\sigma_{u_1,k}(\tilde u_1,\tilde u_2)
			-\sigma_{u_1,k}^n(\tilde u_1,\tilde u_2)}_{L^2(\Om)}^2 \dt
		\\ & = \int_0^T \sum_{k \geq 1}
		\norm{\sigma_{u_1,k}(\tilde u_1,\tilde u_2)
			-\Pi_n\bigl(\sigma_{u_1,k}(\tilde u_1,\tilde u_2)
			\bigr)}_{L^2(\Om)}^2 \dt
		=:\int_0^T \mathcal{I}_n(t)\dt.
	\end{align*}
Moreover, we have the following bound ($\tilde P$-a.s.) (recall that $\tilde u_i \in L^2_{\omega} L^\infty_tL^2_x$ for $i=1,2$ (a.s.))
	\begin{align*}
		\displaystyle0\leq \mathcal{I}_n(t) & \displaystyle\le 4\sum_{k \geq 1}
		\norm{\sigma_{u_1,k}(\tilde u_1(t),\tilde u_2(t))}^2_{L^2(\Om)}
		= 4
		\norm{\sigma_{u_1}(\tilde u_1(t),\tilde u_2(t))}^2_{L_2\left(\U;L^2(\Om)\right)}
		\\ & \displaystyle\overset{\eqref{eq:noise-cond2}}{\le}
		C\left(1+\norm{\tilde u_1(t)}_{L^2(\Om)}^2+\norm{\tilde u_2(t)}_{L^2(\Om)}^2 \right)
		\in L^1(0,T)\qquad \text{$\tilde P$-a.s.}.
	\end{align*}
This implies that\\ $\norm{
		\sigma_{u_1}(\tilde u_1,\tilde u_2)}_{L_2\left(\U;L^2(\Om)\right)}^2
	\in L^1_t$ a.s.~and $\sum_{k\ge1}
	\abs{\sigma_{u_1,k}(\tilde{u}_1,\tilde u_2)}^2
	\in L^1_{t,x}$ a.s., thus
	$$
	\Pi_n\left(\sum_{k \geq 1}
	\sigma_{u_1,k}(\tilde u_1,\tilde u_2)\right)
	\overn \sum_{k \geq 1} \sigma_{u_1,k}(\tilde u_1,\tilde u_2)
	\quad \text{in $L^2(\Om)$},
	$$
	for a.e.~$t$ and almost surely.
	Using this,
	$$
	\mathcal{I}_n(t)\overn 0, \quad
	\text{a.e.~on $[0,T]$ (and a.s)},
	$$
	and an application of Lebesgue's dominated
	convergence theorem, we arrive to
	\begin{equation}\label{eq:noise-conv-I2}
		I_2 \overn 0,
		\quad \text{$\tilde P$-almost surely}.
	\end{equation}
	The convergence  \eqref{eq:etan-conv} is a consequence of \eqref{est:vitali-start1},
	\eqref{eq:noise-conv-I1} and \eqref{eq:noise-conv-I2}. Therefore we obtain
\eqref{eq:stoch-conv-tmp1}.
	
Next, we fix any number $q\in (2,q_0]$ (consult \eqref{cond-init}), we use
	Burkholder-Davis-Gundy inequality \eqref{eq:bdg}
	and \eqref{eq:noise-cond}, \eqref{eq:Lq0-est-tilde}
	to obtain
	\begin{align*}
		&\tilde \E\left[\, \norm{\int_0^t \sigma_{u_1}^n(\tilde u_1^n,\tilde u_2^n)
			\, d \tilde W_{u_1}^{(n)}}_{L^2((0,T);L^2(\Om))}^q \, \right]
		\\ & \quad
		\le \bar C_T  \tilde \E\left[\, \sup_{t\in[0,T]}
		\norm{\sum_{k=1}^n\int_0^t \sigma_{u_1,k}^n(\tilde u_1^n,\tilde u_2^n)
			\, d\tilde W^{n}_{u_1,k}}_{L^2(\Om)}^q\,\right]
		\\ &\quad
		\le C_T \tilde \E\left[\left(\int_0^T
		\sum_{k=1}^n\norm{\sigma^n_{u_1,k}(\tilde u_1^n,\tilde u_2^n)}_{L^2(\Om)}^2
		\dt\right)^{\frac{q}{2}}\right]\le C_{\sigma,T}.
	\end{align*}
	Therefore, an application of Vitali's convergence
	theorem, we deduce from \eqref{eq:stoch-conv-tmp1}
	$$
	\int_0^t \sigma_{u_1}^n(\tilde u_1^n,\tilde u_2^n) \, d\tilde W_{u_1}^{(n)}(s)
	\to
	\int_0^t \sigma_{u_1}(\tilde u_1,\tilde u_2) \, d\tilde W_{u_1} (s) \quad
	\text{in $L^2\left(\tilde D,\tilde \cF,\tilde P;
		L^2(0,T;L^2(\Om))\right)$}.
	$$
	Then, using this and the fact that $\Pi_n \vphi_{u_1}\to \vphi_{u_1}$
	in $L^2(\Om)$, we deduce
	\begin{align*}
		& \tilde \E \left[\int_0^T \En_Z(\omega,t) \left(
		\int_0^t \int_{\Om} \sigma_{u_1}^n(\tilde u_1^n,\tilde u_2^n)
		\Pi_n \vphi_{u_1}
		\dx\, d\tilde W_{u_1}^n(s)\right)\dt \right]
		\\ & \quad  = \tilde \E \left[\int_0^T
		\int_{\Om} \left(\int_0^t \sigma_{u_1}^n(\tilde u_1^n,\tilde u_2^n)
		\, d\tilde W_{u_1}^{(n)} (s)\right)
		\bigl(\En_Z(\omega,t)\Pi_n \vphi_{u_1}(x)\bigr)
		\dx\dt\right] \\ & \qquad
		\overn \tilde \E \left[\int_0^T \En_Z(\omega,t) \left(
		\int_0^t \int_{\Om} \sigma_{u_1}(\tilde u_1,\tilde u_2)
		\vphi_{u_1} \dx \, d\tilde W_{u_1} (s)\right)\dt\right].
	\end{align*}
	This concludes the proof of \eqref{eq:weakform-u-tilde}. The proof is the same \eqref{eq:weakform-v-tilde}.
\end{proof}


\section{Maximum principle of the solutions}\label{Sec8}
In this section we prove that the martingale
solution $(u_1,u_2)$ constructed as the limit of
the Faedo-Galerkin approximations $\left(u_1^n,u_2^n\right)$
is non-negative and bounded in $L^\infty$ almost surely.
In our proof ofe the lemma below, we write $a^-$
for the negative part, $\max(-a,0)$, of $a\in\R$.
Herein, we work with a smooth approximation
$S_\eps(\cdot)$ of $(\cdot)^-$.

The nonnegativity result is given by the following lemma
\begin{lem}
	The solution $(u_1,u_2)$ constructed in
	Theorem \ref{thm} is non-negative and bounded in $L^\infty$ almost surely.
\end{lem}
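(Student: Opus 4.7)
The plan is to adapt the deterministic Stampacchia approach to the stochastic setting by applying It\^o's formula at the Faedo-Galerkin level to a smooth convex approximation of the negative part, and then to transfer the resulting sign information through the Skorokhod limit. Fix $i\in\{1,2\}$ and choose $S_\eps\in C^2(\R)$ approximating $r\mapsto \tfrac12 ((r)^-)^2$, with $S_\eps\equiv 0$ on $[0,\infty)$, $S_\eps'\le 0$ and vanishing on $[0,\infty)$, $S_\eps''\ge 0$ supported in a neighborhood of $0^-$, and $S_\eps(r)\to \tfrac12(r^-)^2$, $S_\eps'(r)\to -(r^-)$ as $\eps\to 0$.

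First I would work with the finite-dimensional processes $u_i^n$ from Lemma \ref{lem:fg-solutions} and apply the It\^o formula to $t\mapsto \int_\Om S_\eps(u_i^n(t))\dx$ using the strong form \eqref{eq:approx-eqn-integrated}. This yields four deterministic bulk contributions plus a martingale. The diffusive term $-d_i\int S_\eps''(u_i^n)\abs{\Grad u_i^n}^2\dx$ is nonpositive. For $i=1$, the prey-taxis term $\int \chi(u_1^n) S_\eps''(u_1^n)\,\Grad u_1^n\!\cdot\!\Grad u_2^n\,dx$ vanishes identically, because $S_\eps''$ is concentrated where $u_1^n<0$ and there $\chi(u_1^n)=0$ by \eqref{S3}. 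The reaction term $\int F_i(u_1^n,u_2^n)S_\eps'(u_i^n)\dx$ is estimated using the sign-preserving extension \eqref{entries-positive}: on $\{u_1^n<0\}$ one has $F_1=0$, so this term is $0$, and similarly $F_2S_\eps'(u_2^n)$ is controlled since $F_2=0$ when $u_2^n<0$ and $u_1^n\ge 0$, or reduces to $k(u_2^n)$ whose negative-part contribution is $\lesssim S_\eps(u_2^n)$. Thus after summing $i=1,2$ one bounds the reaction contribution by $C\int (S_\eps(u_1^n)+S_\eps(u_2^n))\dx$.

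Next I would take expectations, killing the stochastic integral, and treat the quadratic variation
\begin{equation*}
\frac12\sum_{k=1}^{n}\int_0^t\!\!\int_\Om S_\eps''(u_i^n)\,\abs{\sigma_{u_i,k}^n(u_1^n,u_2^n)}^2\dx\ds.
\end{equation*}
Using the Lipschitz hypothesis \eqref{eq:noise-cond} against the reference points $(0,u_2)$ and $(u_1,0)$ together with the extension convention, the integrand is controlled on the support of $S_\eps''$ by $C(|(u_1^n)^-|^2+|(u_2^n)^-|^2)$, which in the limit $\eps\to 0$ (with the standard choice $S_\eps''(r)\sim \eps^{-1}\mathbf 1_{[-\eps,0]}$) produces at most $C\mathbb{E}\int_0^t\!\!\int_\Om (|(u_1^n)^-|^2+|(u_2^n)^-|^2)\dx\ds$. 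Summing the two equations, letting $\eps\to 0$ and invoking Gronwall's lemma, together with the nonnegativity of the initial data $u_{i,0}$, gives $\mathbb E\int_\Om |(u_i^n(t))^-|^2\dx=0$ uniformly in $n$. The nonnegativity of the limit $\tilde u_i$ then follows from the strong $L^2_{\omega,t,x}$ convergence \eqref{eq:tilde-conv}(i).

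For the $L^\infty$ bound I would repeat the scheme with a smooth approximation of $\tfrac12((\cdot-M(t))^+)^2$, choosing a deterministic threshold $M(t)=M_0 e^{Ct}$ with $M_0$ dominating $\norm{u_{1,0}}_{L^\infty}+\norm{u_{2,0}}_{L^\infty}$ and $C$ absorbing the Lipschitz constants of $F_1,F_2$ and the growth constant in \eqref{eq:noise-cond}. The prey-taxis term again drops thanks to $\chi\equiv 0$ outside $[0,u_m]$, and the logistic structure of $F_2$ provides an additional favorable sign for large $u_2$; combining these with Gronwall closes the estimate $\mathbb E\int_\Om ((u_i-M(t))^+)^2\dx=0$.

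The main obstacle will be the quadratic-variation term from the noise, because the second derivative $S_\eps''$ concentrates at $0$ while the growth bound \eqref{eq:noise-cond} does not by itself vanish there; closing the Gronwall loop therefore requires a careful quantitative choice of the Stampacchia approximant $S_\eps$ together with the use of the Lipschitz condition in \eqref{eq:noise-cond} around the reference point where the component being tested vanishes, exactly as in \cite{Chekroun:2016aa}. A secondary technical point is the rigorous justification of It\^o's formula for $\int_\Om S_\eps(u_i^n)\dx$; this is harmless at the Galerkin level but has to be done with care before the limit can be taken.
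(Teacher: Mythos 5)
Your proposal follows essentially the same route as the paper: It\^o's formula applied at the Faedo--Galerkin level to a $C^2$ Stampacchia approximant of $(\cdot^-)^2$ (resp.\ of $((\cdot-M)^+)^2$), the prey-taxis term killed by the disjoint supports of $\chi$ and $S_\eps''$, the reaction term controlled via the extension \eqref{entries-positive}, expectations to remove the martingale, and Gronwall after $n\to\infty$ and $\eps\to 0$. Two remarks: your time-dependent threshold $M(t)=M_0e^{Ct}$ is actually a sounder choice than the paper's constant $M_1$ (for which the claim $\tilde I_3=0$ is not justified, since $F_1$ need not vanish where $u_1^n>M_1$), and the quadratic-variation step you flag as the main obstacle is indeed the crux --- bounding $\sum_k|\sigma_{u_i,k}|^2$ by $|(u_i)^-|^2$ on $\{u_i<0\}$ requires $\sigma_{u_i,k}$ to vanish at the reference value of the tested component, a structural condition the paper also relies on implicitly through its citation of \cite{Chekroun:2016aa} but never states in \eqref{eq:noise-cond}.
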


\begin{proof}
 For simplicity, we drop the tildes on the relevant functions,
	writing for example $u_i^n,u_i$ instead of $\tilde u_i^n,\tilde u_i$ for $i=1,2$.
	For $\eps>0$, denote by $S_\eps(w)$ the $C^2$
	approximation of $\left(w^-\right)^2$ defined by
	\begin{equation*}
		S_{\eps}(w)=
		\begin{cases}
			w^2-\frac{\eps^2}{6} & \text{if $w<-\eps$}, \\
			-\frac{w^4}{2\eps^2}
			-\frac{4w^3}{3\eps}
			& \text{if $-\eps \leq w<0$}, \\
			0 &\text{if $w\geq 0$}.
		\end{cases}
	\end{equation*}
	Note that
	\begin{equation*}
		S_{\eps}'(w)=
		\begin{cases}
			2w & \text{$w<-\eps$}, \\
			-\frac{2w^3}{\eps^2}
			-\frac{4w^2}{\eps}
			& \text{$-\eps \leq w<0$}, \\
			0 &\text{$w\geq 0$}
		\end{cases}
		\quad	S_{\eps}''(w)=
		\begin{cases}
			2 & \text{$w<-\eps$}, \\
			-\frac{6w^2}{\eps^2}
			-\frac{8w}{\eps}
			& \text{$-\eps \leq w<0$}, \\
			0 &\text{$w\geq 0$}.
		\end{cases}
	\end{equation*}
	Observe that $S_{\eps}(w)\ge 0$,
	$S_{\eps}'(w)\leq 0$, and $S_{\eps}''(w)\geq 0$
	for all $w\in \R$. Moreover, as $\eps \to 0$,
	the following convergences hold, uniformly in $w\in \R$:
	$S_{\eps}(w) \to \left(w^-\right)^2$,
	$S_{\eps}'(w)\to -2 w^-$, and
	$S_{\eps}''(w) \to
	\begin{cases}
		2 & \text{if $w< 0$}
		\\
		0 & \text{if $w\geq 0$}
	\end{cases}$.
	Now, an application of It\^{o} formula to
	$S_\eps(u_1^n)$, where $u_1^n$ solves
	\eqref{eq:approx-eqn-integrated}, gives
	\begin{equation}\label{eq:nonegativity1}
		\begin{split}
			&\int_\Om S_\eps(u_1^n(t))\dx
			-\int_\Om S_\eps(u_1^n(0))\dx
			\\ & \quad
			=- \int_0^t \int_{\Om}d_1\, S_{\eps}''(u_1^n(s))
			\abs{\Grad u_1^n}^2 \dx\ds
			+\int_0^t \int_{\Om}S_{\eps}''(u_1^n(s))
			\chi(u_1^n) \nabla u_2^n\cdot \Grad u_1^n \dx\ds
			\\ & \qquad \qquad
			+\int_0^t \int_{\Om}
			S_{\eps}'(u_1^n(s))
			F_1(u_1^n,u_2^n)\dx \ds
			+\sum^n_{k=1}\int_0^t \int_{\Om}
			S_{\eps}'(u_1^n(s))
			\sigma_{u_1,k}^n(u_1^n,u_2^n)\dx \dW_{u_1,k}^n
			\\ &\qquad\qquad\qquad
			+\frac{1}{2} \sum^n_{k=1} \int_0^t
			\int_{\Om}S_{\eps}''(u_1^n(s))
			\left(\sigma_{u_1,k}^n(u_1^n,u_2^n)\right)^2 dx \ds
			=:\sum_{i=1}^5 I_i.
		\end{split}
	\end{equation}
	It is easy to see that $I_1\le 0$.
	From condition \eqref{S3},
	\begin{equation}\label{eq:gamma-eps-prop}
		\begin{split}	
			& S_{\eps}''(w)=0 \quad
			\text{for $w\geq 0$},
			\quad
			\text{and}
			\quad
			S_{\eps}''(w)\geq 0
			\quad \text{for $w \in \R$},
			\\ & \text{and}
			\quad
			\chi(w)=0,
			\quad \text{for $w\leq 0$}.
		\end{split}
	\end{equation}
	Consequently $I_2=0$. Similarly, from the definition of the function
	$F_1$, cf.~\eqref{entries-positive},
	it follows that $I_3=0$.
	
	Using the convergences in \eqref{eq:tilde-conv} and
	sending $n\to \infty$ in \eqref{eq:nonegativity1}, we
	obtain
	\begin{equation}\label{eq:nonegativity1-new}
		\begin{split}
			&\E\left[\norm{S_\eps(u_1(t))}_{L^2(\Om)}^2\right]
			-\E\left[\norm{S_\eps(u_1(0))}_{L^2(\Om)}^2\right]
			\\ & \qquad
			\leq \E\left[\sum_{k=1}^{\infty}
			\int_0^t \int_{\Om} S_{\eps}''(u_1(t))
			\left(\sigma_{u_1,k}^n(u_1,u_2)\right)^2\dx \ds\right],
			\qquad t\in [0,T].
		\end{split}
	\end{equation}
	Next, we send $\eps \to 0$ in \eqref{eq:nonegativity1-new},
	and proceeding exactly as in
	\cite[Section 3.4]{Chekroun:2016aa}, to arrive at
	\begin{equation}\label{eq:nonegativity2}
		\E\left[\norm{u_1^-(t)}_{L^2(\Om)}^2\right]
		-\E\left[\norm{u_1^-(0)}_{L^2(\Om)}^2\right]
		\leq C\,  \E\left[ \int_0^t
		\norm{u_1^-(s)}_{L^2(\Om)}^2 \ds\right],
	\end{equation}
	for a.e.~$t\in [0,T]$ where $C>0$ is a constant.
	Finally, by the nonnegativity of $u_{1}(0)$ and applying Gronwall's
	inequality in \eqref{eq:nonegativity2}, we
	conclude that $u_1^-=0$ a.e.~in $(0,T)\times\Om$,
	almost surely. Along the same lines, it follows that
	$u_2\geq 0$ a.e.~in $(0.T)\times \Om$, almost surely.\\
	
	\noindent Now, the aim is to prove that the martingale solution $u_i$ is bounded by a number $M_i>0$ a.e. and a.s. for $i=1,2$. An application of It\^{o} formula to $S_\eps(M_1-u_1^n)$, we get
	\begin{equation}\label{eq:bound1}
		\begin{split}
			&\int_\Om S_\eps(M_1-u_1^n(t))\dx
			-\int_\Om S_\eps(M_1-u_1^n(0))\dx
			\\ & \quad
			=- \int_0^t \int_{\Om}d_1\, S_{\eps}''(M_1-u_1^n(s))
			\abs{\Grad u_1^n}^2 \dx\ds
			+\int_0^t \int_{\Om}S_{\eps}''(M_1-u_1^n(s))
			\chi(u_1^n) \nabla u_2^n\cdot \Grad u_1^n \dx\ds
			\\ & \qquad \qquad
			+\int_0^t \int_{\Om}
			S_{\eps}'(M_1-u_1^n(s))
			F_1(u_1^n,u_2^n)\dx \ds
			+\sum^n_{k=1}\int_0^t \int_{\Om}
			S_{\eps}'(M_1-u_1^n(s))
			\sigma_{u_1,k}^n(u_1^n,u_2^n)\dx \dW_{u_1,k}^n
			\\ &\qquad\qquad\qquad
			+\frac{1}{2} \sum^n_{k=1} \int_0^t
			\int_{\Om}S_{\eps}''(M_1-u_1^n(s))
			\left(\sigma_{u_1,k}^n(u_1^n,u_2^n)\right)^2 dx \ds
			=:\sum_{i=1}^5 \tilde I_i.
		\end{split}
	\end{equation}
	Observe that $\tilde I_1\le 0$.
	From \eqref{S3}, we obtain
	\begin{equation}\label{eq:gamma-eps-prop2}
		\begin{split}	
			& S_{\eps}''(M_1-w)=0 \quad
			\text{for $w\leq M_1$},
			\quad
			\text{and}
			\quad
			S_{\eps}''(M_1-w)\geq 0
			\quad \text{for $w \in \R$},
			\\ & \text{and}
			\quad
			\chi(w)=0,
			\quad \text{for $w\geq M_1$}.
		\end{split}
	\end{equation}
	As a result $\tilde I_2=0$. Similarly, from the definition of the function
	$F_1$, cf.~\eqref{reaction},
	it follows that $\tilde I_3=0$.
	
	Keeping in mind the convergences in \eqref{eq:tilde-conv}
	(see also \cite[Section 3.2]{Chekroun:2016aa}), we
	send $n\to \infty$ in \eqref{eq:nonegativity1} to
	arrive at the inequality:
	\begin{equation}\label{eq:nonegativity12-new}
		\begin{split}
			&\E\left[\norm{S_\eps(M_1-u_1(t))}_{L^2(\Om)}^2\right]
			-\E\left[\norm{S_\eps(M_1-u_1(0))}_{L^2(\Om)}^2\right]
			\\ & \qquad
			\leq \E\left[\sum_{k=1}^{\infty}
			\int_0^t \int_{\Om} S_{\eps}''(M_1-u_1(t))
			\left(\sigma_{u_1,k}^n(u_1,u_2)\right)^2\dx \ds\right],
			\qquad t\in [0,T].
		\end{split}
	\end{equation}
	Sending $\eps \to 0$ in \eqref{eq:nonegativity12-new}, we deduce
	\begin{equation}\label{eq:nonegativity22}
		\E\left[\norm{(M_1-u_1)^-(t)}_{L^2(\Om)}^2\right]
		-\E\left[\norm{(M_1-u_1)^-(0)}_{L^2(\Om)}^2\right]
		\leq C\,  \E\left[ \int_0^t
		\norm{(M_1-u_1)^-(s)}_{L^2(\Om)}^2 \ds\right],
	\end{equation}
	for a.e.~$t\in [0,T]$ where $C>0$ is a constant.
	Finally, since $u_1(0) \leq M_1$ and applying Gronwall's
	inequality in \eqref{eq:nonegativity22}, we
	conclude that $(M_1-u_1)^-=0$ a.e.~in $(0,T)\times\Om$,
	almost surely. Along the same lines, it follows that
	$u_2\leq M_2$ a.e.~in $(0.T)\times \Om$, almost surely.
	
\end{proof}

\section{Uniqueness of weak martingale solutions}\label{sec:uniq}
In this section we prove an $L^2$ stability estimate and consequently
a pathwise uniqueness result. We are now in a position to prove the stability result.

\begin{thm}\label{thm:uniq}
Assume \eqref{S3} and \eqref{eq:noise-cond} hold.
Let
$\bar U=\bigl(\cS, \bar u_1, \bar u_2\bigr)$ and
$\hat U=\bigl(\cS, \hat u_1, \hat u_2\bigr)$
be two weak solutions (according to Definition \ref{def:martingale-sol}),
relative to the same stochastic basis $\cS$, cf.~\eqref{eq:stochbasis}, with
initial data $\bar u_1(0)=\bar u_{1,0}$, $\hat u_1(0)=\hat u_{1,0}$,
$\bar u_2(0)=\bar u_{2,0}$, and $\hat u_2(0)=\hat u_{2,0}$, where
$\bar u_{1,0}, \hat u_{1,0}, \bar u_{2,0},\hat u_{2,0}\in
L^2\left(D,\cF,P;L^\infty(\Om)\right)$ and nonnegative.
There exists a positive constant $C\ge 1$ such that
\begin{equation}\label{eq:L2-stability}
	\begin{split}
		& \sum_{i=1,2} \E\left[\norm{\bar u_i- \hat u_i}_{L^2(\Om_T)}^2\right]
		\le C \sum_{i=1,2}
		\E \left [ \norm{\bar u_{i,0}-\hat u_{i,0}}_{L^2(\Om)}^2\right].
	\end{split}
\end{equation}
With $\bar u_{1,0}=\hat u_{1,0}$, $\bar u_{2,0}=\hat u_{2,0}$, it follows
that weak martingale solutions are unique.
\end{thm}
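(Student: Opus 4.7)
Set $U_i:=\bar u_i-\hat u_i$ for $i=1,2$ and subtract the weak formulations in~\eqref{eq:weakform} for the two solutions. The plan is to derive an It\^o-based energy identity for $\|U_1(t)\|_{L^2(\Om)}^2+\lambda\|U_2(t)\|_{L^2(\Om)}^2$, with a weight $\lambda>0$ fixed below, take expectations (killing the martingale part), and close a Gronwall inequality. Since $U_i$ has only parabolic regularity in time, the identity is first established at the Galerkin level (as in Sections~\ref{Sec3}--\ref{Sec7}) and then passed to the limit. By Section~\ref{Sec8}, $\bar u_i,\hat u_i\in L^\infty(\Om_T)$ a.s., so $F_i$ is Lipschitz on the relevant ball and produces a term $C\,\E\!\int_0^t\sum_j\|U_j(s)\|_{L^2}^2\ds$; the It\^o correction is controlled similarly by~\eqref{eq:noise-cond2}; parabolic diffusion contributes the favourable dissipation $2d_1\,\E\!\int_0^t\|\Grad U_1\|_{L^2}^2\ds+2\lambda d_2\,\E\!\int_0^t\|\Grad U_2\|_{L^2}^2\ds$.

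The essential work is the prey-taxis term in the $U_1$ equation, which I would split as
\begin{equation*}
\int_{\Om}\!\bigl[\chi(\bar u_1)\Grad\bar u_2-\chi(\hat u_1)\Grad\hat u_2\bigr]\cdot\Grad U_1\dx
=\int_{\Om}\!\chi(\bar u_1)\Grad U_2\cdot\Grad U_1\dx+\int_{\Om}\!\bigl(\chi(\bar u_1)-\chi(\hat u_1)\bigr)\Grad\hat u_2\cdot\Grad U_1\dx.
\end{equation*}
Using $|\chi|\le u_m^2/4$ on the physical range and Young's inequality, the first summand is absorbed partly into $d_1\|\Grad U_1\|^2$ and partly into $\lambda d_2\|\Grad U_2\|^2$; the latter absorption is precisely what forces $\lambda$ to be taken large. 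For the second summand, the Lipschitz bound $|\chi(\bar u_1)-\chi(\hat u_1)|\lesssim|U_1|$ from~\eqref{S3} and Young yield
\begin{equation*}
\Bigl|\int_{\Om}\!\bigl(\chi(\bar u_1)-\chi(\hat u_1)\bigr)\Grad\hat u_2\cdot\Grad U_1\dx\Bigr|\le \tfrac{d_1}{4}\|\Grad U_1\|_{L^2}^2+C\int_{\Om}U_1^2|\Grad\hat u_2|^2\dx.
\end{equation*}

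The hard step, and where the duality viewpoint enters, is to close the remaining $\int_{\Om}U_1^2|\Grad\hat u_2|^2\dx$. The idea is to combine the $L^\infty$ bound of Section~\ref{Sec8} for $U_1$ with a regularity boost for $\hat u_2$ (obtained by viewing its equation as a linear stochastic heat equation with Lipschitz right-hand side) and a Gagliardo--Nirenberg interpolation in $N\le 3$, producing an estimate of the form $\int_{\Om}U_1^2|\Grad\hat u_2|^2\dx\le\varepsilon\|\Grad U_1\|_{L^2}^2+g(t,\omega)\|U_1\|_{L^2}^2$ with $\E\!\int_0^Tg\,\ds<\infty$. Combining everything, absorbing all $\|\Grad U_i\|^2$ terms into the dissipation, and invoking the stochastic Gronwall lemma gives $\E\bigl[\|U_1(t)\|_{L^2}^2+\lambda\|U_2(t)\|_{L^2}^2\bigr]\lesssim\sum_i\E\|U_{i,0}\|_{L^2}^2$ uniformly in $t\in[0,T]$; integrating over $[0,T]$ yields~\eqref{eq:L2-stability}, and pathwise uniqueness follows on setting $\bar u_{i,0}=\hat u_{i,0}$. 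The central obstacle throughout is thus the extraction of enough regularity of $\hat u_2$ to absorb the cross-diffusion remainder; all other estimates reduce to routine Lipschitz and It\^o bookkeeping.
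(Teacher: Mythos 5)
You have correctly located the crux — the cross-diffusion remainder — but the step you propose to handle it does not close. Your absorption $\int_\Om U_1^2\abs{\Grad\hat u_2}^2\dx\le\varepsilon\norm{\Grad U_1}_{L^2}^2+g(t,\omega)\norm{U_1}_{L^2}^2$ with $\E\int_0^Tg\ds<\infty$ is asserted, not proved, and it cannot be extracted from Definition~\ref{def:martingale-sol}: a weak martingale solution only satisfies $\hat u_2\in L^2\big((0,T);\tH(\Om)\big)\cap L^\infty(\Om_T)$, so $\Grad\hat u_2$ is merely $L^2_{t,x}$. The Gagliardo--Nirenberg route you sketch requires (after using $\norm{U_1}_{L^\infty}$) something like $\Grad\hat u_2\in L^2_tL^4_x$ in dimension $N=3$, i.e.\ a genuine regularity boost for $\hat u_2$ that is nowhere established in the paper's a priori estimates and is not automatic for a stochastic heat equation whose noise is only $L^2(\Om)$-valued. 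A secondary issue: for uniqueness you cannot "establish the identity at the Galerkin level and pass to the limit", since $\bar u_i,\hat u_i$ are arbitrary weak solutions and need not arise from the Galerkin scheme; one must invoke the variational It\^o formula for the difference directly.

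The paper's proof takes a genuinely different route precisely to sidestep the term you get stuck on: it is a duality argument. It introduces $\cN_{u_1}$ solving $-\Delta\cN_{u_1}=u_1$ with homogeneous Neumann data and zero mean, and propagates the quantity $\E\big[\norm{\Grad\cN_{u_1}(t)}_{L^2(\Om)}^2\big]+\E\big[\norm{u_2(t)}_{L^2(\Om)}^2\big]$, i.e.\ it measures $u_1$ in a negative norm. Testing the $u_1$-equation against $\cN_{u_1}$ turns the Laplacian into the $L^2$ dissipation $-d_1\norm{u_1}_{L^2(\Om)}^2$, and \emph{both} pieces of the taxis difference are paired with $\Grad\cN_{u_1}$ rather than with $\Grad u_1$: the piece $\bigl(\chi(\hat u_1)\Grad u_2,\Grad\cN_{u_1}\bigr)$ is split by Young into $\frac{d_2}{2}\norm{\Grad u_2}_{L^2(\Om)}^2$ (absorbed by the dissipation coming from the $u_2$-energy identity) plus $C\norm{\Grad\cN_{u_1}}_{L^2(\Om)}^2$, while $\bigl((\chi(\bar u_1)-\chi(\hat u_1))\Grad\bar u_2,\Grad\cN_{u_1}\bigr)$ enters the Gr\"onwall coefficient $\bigl(\norm{\Grad\bar u_2}_{L^\infty(\Om)}+1\bigr)$. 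Thus no bound on $\Grad u_1$ is ever needed, which is exactly what your direct weighted $L^2$ energy cannot avoid. (The paper's coefficient still invokes $\norm{\Grad\bar u_2}_{L^\infty(\Om)}$, so the gradient-regularity question for $u_2$ does not fully disappear there either, but the dual-norm device eliminates the $\Grad U_1$ obstruction entirely.) To salvage your approach you would have to actually prove the $W^{1,4}$-type regularity of $\hat u_2$; otherwise you should switch to the negative-norm energy as the paper does.
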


\begin{proof}
Set $u_1:=\bar u_1- \hat u_1$ and $u_2:=\bar u_2- \hat u_2$. We have $P$-a.s. for $i=1,2$,
\begin{align*}
	&u_i, \bar u_i, \hat u_i \in  L^\infty(\Om_T)\cap L^2((0,T);\tH(\Om))\cap L^\infty((0,T);L^2(\Om)).
\end{align*}

\noindent Subtracting the $(\tH(\Om))^*$ valued
equations for $\bar u_i, \hat u_i$ for $i=1,2$, we obtain
\begin{equation}\label{eq:uniq-v}
	\begin{split}
		&d u_1  -d_1\Delta u_1\dt+\Div (\chi(\bar u_1 )\nabla \bar u_2-\chi(\hat u_1 )\nabla \hat u_2)\dt=
		\left(F_1(\bar u_1,\bar u_2)-F_1(\hat u_1,\hat u_2)\right)\dt\\
		&\hskip7cm+\left(\sigma_{u_1}(\bar u_1,\bar u_2)-\sigma_{u_1}(\hat u_1, \hat u_2)\right) \dW_{u_1}(t),
		\\
		& d u_2  -d_2\Delta u_2\dt= \left(F_2(\bar u_1,\bar u_2)-F_2(\hat u_1,\hat u_2)\right)\dt+\left(\sigma_{u_2}(\bar u_1,\bar u_2)-\sigma_{u_2}(\hat u_1, \hat u_2)\right) \dW_{u_2}(t).
	\end{split}
\end{equation}
Now we define the function $\cN_w \in H^2(\Omega)\cap L^2(\Omega)$ such that
$\displaystyle \int_\Omega \cN_w\,dx =0$
and solution of the problem
\begin{equation}\label{ellip:uniq-gener}
-\Delta \cN_w=w \mbox{ in $\Omega$} \qquad \mbox{ and }\qquad
\frac{\partial  \cN_w}{\partial \eta}=0\mbox{  on
}\partial{\Omega}
\end{equation}
for a.e. $t \in (0,T)$. 
Multiplying the first equation in (\ref{eq:uniq-v}) by $\cN_{u_1}$, we obtain
\begin{equation}\label{est1:uniq-gener}
	\begin{split}
		 \left(d u_1, \cN_{u_1}\right)&= d_1 \left(u_1,\Delta \cN_{u_1}\right)\dt
                 -\bigl (\chi(\bar u_1 )\nabla \bar u_2-\chi(\hat u_1 )\nabla \hat u_2 , \Grad \cN_{u_1}\bigr)\dt
		\\ & \qquad
		+ \left (F_1(\bar u_1,\bar u_2)-F_1(\hat u_1,\hat u_2), \cN_{u_1} \right)\dt
		+ \sum_{k=1}^n\left( \sigma_{u_1,k}(\bar u_1,\bar u_2)-\sigma_{u_1,k}(\hat u_1, \hat u_2),
		\cN_{u_1} \right)\dW_{u_1,k}(t)\\
                  &= d_1 \left(u_1,\Delta \cN_{u_1}\right)\dt
		-\bigl ((\chi(\bar u_1 )-\chi(\hat u_1 ))\nabla \bar u_2 , \Grad \cN_{u_1}\bigr)\dt
                   -\bigl (\chi(\hat u_1 ) \nabla u_2, \Grad \cN_{u_1}\bigr)\dt
		\\ & \qquad
		+ \left (F_1(\bar u_1,\bar u_2)-F_1(\hat u_1,\hat u_2), \cN_{u_1} \right)\dt
		+ \sum_{k=1}^n\left( \sigma_{u_1,k}(\bar u_1,\bar u_2)-\sigma_{u_1,k}(\hat u_1, \hat u_2),\cN_{u_1} \right)\dW_{u_1,k}(t).
	\end{split}	
\end{equation}

Now, using \eqref{ellip:uniq-gener} to deduce
\begin{equation}\label{est2:uniq-gener}
\begin{array}{lll}
\displaystyle 2\int_0^t\left(d u_1, \cN_{u_1}\right)&\displaystyle= -2\int_0^t \left(d\Delta \cN_{u_1},\cN_{u_1}\right) \\
&\displaystyle= \int_0^t d\abs{\Grad \cN_{u_1}}^2\\
&\displaystyle=\int_\Omega |\nabla \cN_{u_1}(t)|^2 \,dx  -\int_\Omega \abs{\nabla \cN_{u_1}(0)}^2 \,dx           \\
&=\displaystyle\int_\Omega |\nabla \cN_{u_1}(t)|^2 \,dx.
  \end{array}
\end{equation}
Integrating over $\Om_t$ and using the H\"{o}lder's, Young's, Sobolev poincar\'e's and Burkholder-Davis-Gundy inequalities \eqref{eq:noise-cond2} yields from (\ref {est1:uniq-gener})
\begin{eqnarray}\label{est4:uniq-gener}
  \begin{array}{ll}\displaystyle
\displaystyle \int_0^t(d u_1,\cN_{u_1} ) &\le
-d_1 \displaystyle \iint_{\Om_t}|u_1|^2 \,dx\,ds
+\tau\iint_{\Om_t}|u_1|^2\,dx\,ds
\\
&\quad +C\displaystyle \int_0^T \left\|{\nabla \bar u_2}\right\|^2_{L^\infty(\Omega)} \left\|{\nabla \cN_{u_1}}\right\|_{L^2(\Omega)}^2 \,ds\\
&\quad +\displaystyle\frac{d_2}{2}\int_{0}^T\left\|{\nabla
u_2}\right\|_{L^2(\Omega)}^2\,ds
+C\int_{0}^T\left\|{\nabla \cN_{u_1}}\right\|_{L^2(\Omega)}^2 \,ds\\
 &\quad +\displaystyle\tau\iint_{\Om_t}|u_1|^2 \,dx\,ds
+C\displaystyle\int_{0}^T\left\|{\nabla \cN_{u_1}}\right\|_{L^2(\Omega)}^2 \,ds\\
&\quad +\displaystyle\tau \iint_{\Om_t}|u_1|^2 \,dx\,ds+C\iint_{\Om_t}|u_2|^2\,dx\,ds\\
&\quad +C\displaystyle\int_{0}^T\left\|{\nabla \cN_{u_1}}\right\|_{L^2(\Omega)}^2 \,ds\\
&=\displaystyle (3\tau -d_1) \iint_{\Om_t}|u_1|^2\,dx\,ds\\
&\quad\displaystyle +C\int_{0}^T\left\|{\nabla
\bar u_2}\right\|^2_{L^\infty(\Omega)}
\left\|{\nabla \cN_{u_1}}\right\|_{L^2(\Omega)}^2 \,ds \\
&\quad+\displaystyle\frac{d_2}{2}\int_{0}^T\left\|{\nabla
u_2}\right\|_{L^2(\Omega)}^2\,ds
+3C \int_{0}^T\left\|{\nabla \cN_{u_1}}\right\|_{L^2(\Omega)}^2 \,ds\\
&\quad\displaystyle+C\iint_{\Om_t}|u_2|^2 \,dx\,ds,
  \end{array}
\end{eqnarray}
for some constant $C>0$.
An application of the It\^{o} formula to \eqref{eq:uniq-v} and H\"{o}lder's, Young's  inequalities and \eqref{eq:noise-cond2}, we obtain the following inequality:
\begin{equation}\label{eq:Ito-L2-v-diff_uniq1}
	\begin{split}
&\frac12\norm{u_2(t)}_{L^2(\Om)}^2 + d_2\int_0^t \int_{\Om} \abs{\Grad u_2}^2 \dx \ds
		\\ &
		\, \le \frac12\norm{u_2(0)}_{L^2(\Om)}^2
		+ \int_0^t \int_{\Om} \left(F_2(\bar u_1,\bar u_2)-F_2(\hat u_1, \hat u_2)\right)u_2\dx \ds
		\\ &  \quad
		+\sum_{k\ge 1}\int_0^t \int_{\Om}  \abs{\sigma_{u_2,k}(\bar u_1,\bar u_2)-\sigma_{u_2,k}(\hat u_1,\hat u_2)}^2 \dx\ds
		+\sum_{k\ge 1}\int_{\Om} u_2\left(\sigma_{u_2,k}(\bar u_1,\bar u_2)-\sigma_{u_2,k}(\hat u_1,\hat u_2)\right)\dx \dW_{u_2}^k\\
& \le \frac12\norm{u_2(0)}_{L^2(\Om)}^2+ \tau \iint_{\Om_t}|u_1|^2 \,dx\,ds
+C\iint_{\Om_t}|u_2|^2 \,dx\,ds+ C \int_{0}^T\left\|{\nabla \cN_{u_1}}\right\|_{L^2(\Omega)}^2 \,ds,
	\end{split}
\end{equation}
for some constant $C>0$. The consequence of (\ref {est4:uniq-gener}) and (\ref{eq:Ito-L2-v-diff_uniq1}) is
\begin{eqnarray}\label{est6:uniq-gener}
  \begin{array}{l}
\displaystyle \E\left[\norm{u_2(t)}_{L^2(\Om)}^2\right]
+\E\left[\norm{\nabla \cN_{u_1}(t,x)}_{L^2(\Om)}^2\right]          \\
\qquad \qquad \quad \displaystyle \le C \int_{0}^T\E\left[\Bigl(\left\|{\nabla \overline{u}_2}\right\|_{L^\infty(\Omega)}
+1\Bigl)\left\|{\nabla \cN_{u_1}(s)}\right\|_{L^2(\Omega)}^2 \right]\,ds +C\int_0^T E\left[\norm{u_2(s)}_{L^2(\Om)}^2\right]\,ds,
  \end{array}
\end{eqnarray}
for some constant $C>0$. Finally, the Gr\"{o}nwall lemma delivers from (\ref {est6:uniq-gener})
$$
u_2=0 \mbox{ and } \nabla \cN_{u_1}=0 \mbox{ a.e.~in $\Om_t$, almost surely},
$$
ensuring the uniqueness of weak martingale solutions.

\end{proof}


\begin{thebibliography}{99}
	

\bibitem{ABN08}
\newblock B.E. Ainseba, M. Bendahmane and A. Noussair,
\newblock A reaction-diffusion system modeling predator-prey with prey-taxis,
\newblock \textit{Nonlinear Anal. Real World Appl.}, (2008), 9, 2086--2105.


\bibitem{Nint3}
\newblock  P. Amorim, B. Telch and L. M. Villada ,
\newblock A reaction-diffusion predator-prey model with pursuit, evasion, and nonlocal sensing,
\newblock \textit{Math. Biosci. Eng.,} 5(16), (2019), 5114--5145.







\bibitem{BC05}
\newblock  M. Bandyopadhyay, J. Chattopadhyay,
\newblock Ratio-dependent predator-prey model: effect of environmental fluctuation and stability,
\newblock \textit{Nonlinearity}, (2005), 18, 913--936.


\bibitem{BCHN16}
\newblock M. L. Bates, R. A. Cropp, D. W. Hawker and J. Norbury,
\newblock Which functional responses preclude extinction in ecological population-dynamic models?,
\newblock \textit{Ecol. Complex.},  (2016), 26, 57--67.


\bibitem{BB5} N. Bellomo, A. Bellouquid, Y. Tao and M. Winkler, \newblock Toward a  mathematical theory of   Keller-Segel models  of pattern formation in biological tissues,
\newblock \textit{Math. Models Methods Appl. Sci. },  25(09), (2015) 1663--1763.



\bibitem{[BK22]}
\newblock  M. Bendahmane and K.H. Karlsen,
\newblock Martingale solutions of stochastic nonlocal cross-diffusion systems,
\newblock \textit{Netw. Heterog.}, (2022), 17, 719--752.

\bibitem{[BNTZ23]}
\newblock  M. Bendahmane, H. Nzeti, J. Tagoudjeu and M. Zagour,
\newblock Stochastic reaction-diffusion system modeling predator-prey interactions with prey-taxis and noises,
\newblock \textit{Preprint}, (2023).

\bibitem{[BTZ22]}
\newblock  M. Bendahmane, J. Tagoudjeu and M. Zagour,
\newblock Odd-Even based asymptotic preserving scheme for a 2D stochastic kinetic-fluid model,
\newblock \textit{J. Comput. Phys.}, (2022), 471, 111649.

\bibitem{Berryman}
\newblock  A.A. Berryman,
\newblock The origins and evolution of predator-prey theory,
\newblock \textit{Ecology}, (1992),  73(5), 1530--1535.


\bibitem{Chekroun:2016aa}
\newblock  M.D. Chekroun, E. Park and R. Temam,
\newblock The Stampacchia maximum principle for stochastic partial differential equations and applications,
\newblock \textit{J. Differential Equations}, (2016), 3(260), 2926--2972.




\bibitem{barda1987optimal}
\newblock G. Da Prato and A. Ichikawa,
\newblock  Optimal control of linear systems with almost periodic inputs,
\newblock \textit{SIAM J. Control Optim.}, (1987), 4(25), 1007--1019.

\bibitem{DaPrato:2014aa}
\newblock G. Da Prato and J. Zabczyk,
\newblock Stochastic equations in infinite dimensions,
\newblock volume 152 of Encyclopedia of Mathematics and its Applications. Cambridge University Press, Cambridge, second edition, 2014.

\bibitem{DL21}
\newblock  F. Dai and B. Liu,
\newblock Global solvability and optimal control to a haptotaxis cancer invasion model with two cancer cell species,
\newblock \textit{ Appl. Math. Optim.}, (2021),  3(84), 2379--2443.,


\bibitem{DL20}
\newblock F. Dai and B. Liu,
\newblock Global solution for a general cross-diffusion two-competitive-predator and one-prey system with predator-taxis,
\newblock \textit{Commun. Nonlinear Sci. Numer. Simulat.}, (2020), 89, 105336.


\bibitem{DS13}
\newblock  J.H.P. Dawes and M.O. Souza,
\newblock A derivation of Holling's type I, II and III functional responses in predator–prey systems,
\newblock \textit{J. Theoret. Biol.}, (2013), 327, 11--22.

\bibitem{Debussche:2011aa}
\newblock  A. Debussche, N. Glatt-Holtz, and R. Temam,
\newblock Local martingale and pathwise solutions for an abstract fluids model,
\newblock \textit{Phys. D}, (2011), 240(14-15), 1123--1144.

\bibitem{DMPT}
\newblock  U. Dobramysl, M. Mobilia, M. Pleimling and U.C. Täuber,
\newblock Stochastic population dynamics in spatially extended predator-prey systems,
\newblock \textit{ J. Phys. A}, (2018), 6(51), 063001.









\bibitem{G1998}
\newblock  D. Grünbaum,
\newblock  Using spatially explicit models to characterize foraging performance in heterogeneous landscapes,
\newblock \textit{	Amer. Nat.}, (1998), 151, 97--115,





\bibitem{H13}
\newblock M. Hofmanová,
\newblock Degenerate parabolic stochastic partial differential equations,
\newblock \textit{Stochastic Process. Appl.}, (2013), 123(12), 4294--4336.


\bibitem{HSWZ21}
\newblock  Y. Huang, W. Shi, C. Wei and S. Zhang,
\newblock  A stochastic predator-prey model with Holling II increasing function in the predator,
\newblock \textit{J. Biol. Dyn.},  (2021), 15(1), 1--18.



\bibitem{Sko-representation}
\newblock A. Jakubowski,
\newblock The almost sure skorokhod representation for subsequences in nonmetric spaces,
\newblock \textit{Theory Probab. Appl.},  (1997), 42, 167--174.

\bibitem{JJJ21}.
\newblock  E. Jeong, K. Junha and L. Jihoon,
\newblock Stabilization in a two dimensional two-species aerotaxis-Navier-Stokes system,
\newblock \textit{Nonlinear Anal. Real World Appl.}, (2021),  57, 103187.


\bibitem{JW17}
\newblock   H.Y. Jin  and  Z.A. Wang,
\newblock Global stability of prey-taxis systems,
\newblock \textit{J. Differ. Equations}, (2017), 262 , 1257--1290.
	
	
	\bibitem{JW21}
	\newblock   H.Y. Jin and  Z.A. Wang,
	\newblock Global dynamics and spatio-temporal patterns of predator-prey systems with density-dependent motion,
	\newblock \textit{Euro. Jnl of Applied Mathematics}, (2021), 32,  652--682.
	
	
	\bibitem{karatzas1998brownian}
	\newblock I. Karatzas and S.E. Shreve,
	\newblock Brownian motion and stochastic calculus,
	\newblock Springer,  1998.
	
	
	\bibitem{KO87}
	\newblock P. Kareiva and G. Odell,
	\newblock Swarms of predators exhibit 'prey-taxis' if individual predators use area-restricted search,
	\newblock \textit{Amer Nat.}, (1987), 130, 233–-270.
	
	
	
	
	
	
	
	
	
	
	\bibitem{LW11}
	\newblock M. Liu and K. Wang,
	\newblock Persistence and extinction in stochastic non-autonomous logistic systems,
	\newblock \textit{ J. Math. Anal. Appl.}, (2011),  2(375), 443--457.
	
	
	
	
	
	\bibitem{May01}
	\newblock  R.M. May,
	\newblock Stability and Complexity in Model Ecosystems,
	\newblock Princeton University Press, New Jersey, 2001.
	
	
	\bibitem{MCC1985}
	\newblock  W. Murdoch, J. Chesson and P. Chesson,
	\newblock Biological control in theory and practice,
	\newblock \textit{ Amer. Nat.}, (1985), 125, 344--366.
	
	
	\bibitem{NY17}
	\newblock   D. Nguyen and G. Yin,
	\newblock Coexistence and exclusion of stochastic competitive Lotka-Volterra models,
	\newblock \textit{J. Difference Equ.}, (2017), 262, 1192--1225.
	
	
	\bibitem{NY21}
	\newblock N.N. Nguyen and G. Yin,
	\newblock Stochastic Lotka-Volterra competitive reaction-diffusion systems perturbed by space-time white noise: Modeling and analysis,
	\newblock \textit{ J. Differential Equations}, (2021), 282, 184--232.
	
	
	\bibitem{NY20}
	\newblock  N.N. Nguyen and G. Yin,
	\newblock Stochastic partial differential equation models for spatially dependent predator-prey equations,
	\newblock \textit{Discrete Contin. Dyn. Syst. B}, (2020),  1(25 ), 117--139.
	
	
	\bibitem{OB12}
	\newblock S. O'Malley and  M.A. Bees,
	\newblock The orientation of swimming biflagellates in shear flows,
	\newblock \textit{Bull. Math. Biol.}, (2012),  1(74), 232--255.
	
	
	

	
	
	
	
	
	\bibitem{Prevot:2007aa}
	\newblock C. Pr{\'e}v{\^o}t and M. R{\"o}ckner,
	\newblock  A concise course on stochastic partial differential equations,
	\newblock volume 1905 of  Lecture Notes in Mathematics, Springer, Berlin, 2007.
	
	\bibitem{prevot2007concise}
	\newblock C. Pr{\'e}v{\^o}t and M. R{\"o}ckner,
	\newblock A concise course on stochastic partial differential equations,
	\newblock Springer, 2007.
	
	\bibitem{RW12}
	\newblock F. Rao and W. Wang,
	\newblock Dynamics of a Michaelis-Menten-type predation model incorporating a prey refuge with noise and external forces,
	\newblock \textit{ J. Stat. Mech.}, (2012), P03014.
	
	\bibitem{RB21}
	\newblock  G. Ren and B. Liu,
	\newblock  Global existence and convergence to steady states for a predator-prey model with both predator-and prey-taxis,
	\newblock \textit{Discrete Contin. Dyn. Syst.}, (2022), 42(2), 759--779.
	
	
	\bibitem{RA20}
	\newblock  F.A. Rihan and H.J. Alsakaji,
	\newblock Persistence and extinction for stochastic delay differential model of prey predator system with hunting cooperation in predators,
	\newblock \textit{Adv. Difference Equ.}, (2020), 124.
	
	
	\bibitem{R03}
	\newblock R. Rudnicki,
	\newblock Long-time behaviour of a stochastic prey–predator model,
	\newblock \textit{Stochastic Process. Appl.}, (2003), 1(108), 93--107.
	
	
	\bibitem{det23}
	\newblock N. Sapoukhina, Y. Tyutyunov and R. Arditi,
	\newblock The role of prey-taxis in biological control,
	\newblock \textit{Amer. Nat.}, (2003), 162, 61--76.
	
	\bibitem{Simon:1987vn}
	\newblock  J. Simon.,
	\newblock Compact sets in the space $L^p(0, T; B)$,
	\newblock \textit{Ann. Mat. Pura Appl.}, (1987), 146(4), 65--96.

	
	

  \bibitem{Tao10}
  \newblock  Y. Tao,
  \newblock Global existence of classical solutions to a predator-prey model with nonlinear prey-taxis,
  \newblock \textit{Nonlinear Anal. Real World Appl.},  (2010), 11, 2056--2064.


  \bibitem{TY15}
  \newblock  K. Tran and G. Yin,
  \newblock Optimal harvesting strategies for stochastic competitive Lotka-Volterra ecosystems,
  \newblock \textit{Automatica}, (2015), 55, 236--246


  \bibitem{WWS2018}
  \newblock   S. Wu, J. Wang and J. Shi.,
  \newblock Dynamics and pattern formation of a diffusive predator–prey model with predator-taxis,
  \newblock \textit{Math. Models Methods Appl. Sci.},  (2018), 11(28), 2275--2312.


  \bibitem{XY18}
  \newblock X. Xu and Y. Wang,
  \newblock  Global existence and boundedness in a reaction–diffusion–taxis system with three species,
  \newblock \textit{Adv. Difference Equ.}, (2018), 115.


  \bibitem{YWS2009}
  \newblock  F. Yi, J. Wei and J. Shi,
  \newblock  Bifurcation and spatiotemporal patterns in a homogeneous diffusive predator-prey system,
  \newblock \textit{ J. Differential Equations},  (2009), 5(246), 1944--1977.
	

	
	
	
	
	
\end{thebibliography}
\end{document}